\documentclass[12pt]{amsart}

\usepackage{enumerate}
\usepackage{mathrsfs}
\usepackage{amsmath,amssymb,amsfonts}
\usepackage{newcent} 
\usepackage{graphicx}
\usepackage[normalem]{ulem}
\usepackage{fullpage}
\usepackage{esvect}
\usepackage{color}
\usepackage{tikz}
\usetikzlibrary{calc}
\usetikzlibrary{arrows.meta}
\usetikzlibrary{shapes.geometric}

\newtheorem{thm}{Theorem}[section]
\newtheorem{theorem}[thm]{Theorem}
\newtheorem{lemma}[thm]{Lemma}
\newtheorem{cor}[thm]{Corollary}
\newtheorem{corollary}[thm]{Corollary}

\newtheorem{proposition}[thm]{Proposition}

\newtheorem{question}[thm]{Question}

\theoremstyle{definition}
\newtheorem{defn}[thm]{Definition}

\theoremstyle{remark}

\makeatletter
\let\c@equation\c@thm
\makeatother
\numberwithin{equation}{section}

\newcommand{\size}[1]{\left \vert #1 \right \vert}
\newcommand{\cart}{\, \Box \,}

\newcommand{\floor}[1]{\left \lfloor #1 \right \rfloor}

\DeclareMathOperator{\dist}{\textrm{dist}}

\DeclareMathOperator{\tw}{tw}

\DeclareMathOperator{\dirloc}{\zeta_d}
\DeclareMathOperator{\dirlocinc}{\dirloc}
\DeclareMathOperator{\dirloccomp}{\zeta_d^*}

\begin{document}

\bibliographystyle{plain}

\title{The Directional Localization Game on Graphs}

\author{John Jones}
\address{Department of Mathematics and Applied Mathematical Sciences, University of Rhode Island, University of Rhode Island, Kingston, RI, USA, 02881}
\email{\tt jj\_jones@uri.edu}

\author{William B. Kinnersley}
\address{Department of Mathematics and Applied Mathematical Sciences, University of Rhode Island, University of Rhode Island, Kingston, RI, USA, 02881}
\email{\tt billk@uri.edu}

\subjclass[2020]{Primary 05C57}
\keywords{pursuit-evasion games, localization game, metric dimension, degeneracy, treewidth}

\begin{abstract}
In the localization game on a graph $G$, a team of cops searches for an invisible, mobile robber on $G$ by ``probing'' vertices; each probe tells the cops the distance from the probed vertex to the robber.  The cops win if they can uniquely determine the robber's location.  In this paper, we introduce a related game: the \textit{directional localization game}.  In this game, instead of probes returning distances, they return directions: when the cops probe a vertex $v$, the robber must respond with one or more neighbors of $v$ that lie on a shortest path from $v$ to the robber's location.  The minimum number of cops needed to win this game on $G$ is the \textit{directional localization number} of $G$.  We study the directional localization game on several classes of graphs, including chordal graphs, Cartesian products, and incidence graphs of projective planes.  We also bound the directional localization number of a graph $G$ in terms of the degeneracy and the treewidth of $G$. 
\end{abstract}

\maketitle

\section{Introduction}

\textit{Pursuit-evasion games} model scenarios in which a mobile \textit{evader} seeks to avoid one or more \textit{pursuers}.  In the discrete setting, one typically uses a graph as the ``game board'', with the evader moving between vertices of the graph.  Perhaps the best-known example of a pursuit-evasion game is \textit{Cops and Robbers}, introduced independently by Nowakowski and Winkler \cite{NW83} and by Quillot \cite{Qui78}.  Cops and Robbers is a two-player game that pits a team of one or more cops against a single robber.  All of the cops and the robber occupy vertices of some graph $G$, and the players take turns moving from vertex to adjacent vertex.  If any cop ever occupies the same vertex as the robber, then the cops win; conversely, the robber wins if he can evade capture indefinitely.  The key parameter of interest is the \textit{cop number} of $G$, which represents the minimum number of cops needed to win when the game is played on $G$.  For more background on Cops and Robbers, and for a sampling of the many variants of the game that have been studied, see the recent survey \cite{BN11}.

In the standard model of Cops and Robbers, both players have perfect information: the cops know the robber's location and vice-versa.  Recently, however, there has been significant interest in pursuit-evasion games in which the pursuers have only partial information about the evader's location.  For example, To\v{s}i\`{c} \cite{Tos85} introduced \textit{zero-visibility Cops and Robbers}, wherein the robber is ``invisible'' and cannot be detected by the cops until such time as they capture him; Clarke et al. \cite{CCDDFM20} later introduced the more general \textit{$\ell$-visibility Cops and Robbers}, wherein cops can only ``see'' the robber if he occupies a vertex within distance $\ell$ of some cop.  

Another such model of pursuit and evasion is the \textit{localization game}.  The localization game can be viewed as a variant of Cops and Robbers in which the cops do not know the robber's location, akin to zero-visibility Cops and Robbers.  However, in the localization game on a graph $G$, the cops do not actually traverse $G$.  Instead, in each round of the game, each cop chooses a vertex of $G$ to \textit{probe}, after which the cops learn the distance from the robber's location to each probed vertex.  If the cops have obtained enough information to uniquely determine the robber's location, then the cops win.  Otherwise, the game continues: the robber may move to a neighboring vertex, then the cops again choose vertices to probe, and so on.  If the cops have a strategy to guarantee that they locate the robber within a bounded number of rounds, then the cops win the game; otherwise, the robber wins.  The minimum number of cops needed to win the game on a graph $G$ is the \textit{localization number} of $G$, denoted $\zeta(G)$.  The localization game was first introduced by Seager \cite{Sea12}, who considered the game for a single cop; the game was later generalized to multiple cops by Haslegrave, Johnson, and Koch \cite{HJK18} and has been further studied since then (see e.g. \cite{BMMNP18, BGGNSS18, DFP19, KB20}).

In this paper, we introduce a new variant of the localization game, which we call the \textit{directional localization game}.  The directional localization game is played similarly to the localization game; the chief difference between the games is the information obtained by the cops' probes.  In the directional localization game, instead of returning the distance to the robber, each probe returns the direction: the robber responds to each probe with a set of one or more vertices that are adjacent to the probe and lie on some shortest path between the probe and the robber.  In essence, probes in the directional localization game serve as passive sensors that can determine the direction from which a signal originated, but not the range.  As with the localization game, in the directional localization game the cops win if and only if they have a strategy to determine the robber's location within a bounded number of rounds.  We define the minimum number of cops needed to win the game on a graph $G$ to be the \textit{directional localization number} of $G$.  For a more precise explanation of the game, refer to Section \ref{sec:defns}.


Our focus is on determining and bounding the directional localization numbers on several natural classes of graphs. The paper is laid out as follows. In Section \ref{sec:dirloc_prelim}, we provide a formal definition of the game and determine the directional localization numbers of several simple graph classes.  In Section \ref{sec:dirloc_genbounds}, we establish general bounds on the directional localization number in terms of other graph parameters; most notably, Theorem \ref{thm:degen_upper} states that a $k$-degenerate has directional localization number at most $k+1$, while Theorems \ref{thm:cartesian_complete} and \ref{thm:cartesian_lower} give bounds on the directional localization numbers of Cartesian products in terms of the directional localization numbers of their factors.  In Section \ref{sec:dirloc_treewidth}, we explore the connection between the directional localization number of a graph and its treewidth.  In Theorem \ref{thm:treewidth_restricted} we show that if a graph has a special kind of tree decomposition of width $k$, then its directional localization number is at most $k$.  As an application of this result, in Theorem \ref{thm:small_treewidth} we show that if a graph has treewidth at most 2, then its directional localization number is bounded above by its treewidth. In Section \ref{sec:dirloc_proj_planes}, we consider incidence graphs of projective planes, showing that such graphs always have directional localization number 2 (Theorem \ref{thm:proj_planes}).  Finally, in Section \ref{sec:dirloc_future_work}, we conclude the paper with some open questions and potential directions for future research.


\section{Preliminaries}\label{sec:dirloc_prelim}

\subsection{Definitions and Notation}\label{sec:defns}

The directional localization game is played on a connected graph $G$ between a team of \textit{cops} and a single \textit{robber}.  (Throughout this paper, unless otherwise specified, all graphs are assumed to be undirected and connected.)  For convenience, we will refer to a single cop using the pronouns she/her and to the robber using the pronouns he/him.  Throughout the game, the robber occupies a vertex of $G$.  At the beginning of the game, the robber chooses his initial location, and in the course of the game he will have the opportunity to move from vertex to vertex.  

Unlike many pursuit-evasion games, in the directional localization game, the cops do not know the robber's location.  Indeed, their goal is to uniquely determine the robber's location.  To accomplish this, the cops make a series of vertex \textit{probes} that yield information about the robber's location.  When a cop probes a vertex, the robber must issue a \textit{response} to that probe.  Intuitively, the robber's response indicates the direction from the probed vertex to the robber's location.  Formally, the robber's response to a cop probe at vertex $v$ is a nonempty subset $S$ of $N[v]$, where:
\begin{itemize}
\item $v \in S$ if and only if the robber is located at $v$, and 
\item for $w \not = v$, if $w \in S$, then some shortest path from $v$ to the robber's location passes through $w$.
\end{itemize}
If the cops probe $v$ and the robber's response includes some vertex $w$, we sometimes say that the probe at $v$ \textit{returns} $w$ or \textit{points at} $w$.  The cops share information; that is, whenever a cop probes a vertex, all cops know which vertex was probed and what the robber's response was.  After the cops probe a vertex $v$, we say that a vertex $w$ is \textit{consistent with the probe at $v$} if the robber's response to the probe does not preclude the possibility that the robber occupies $w$.  (Note that consistency with a probe does not take into account information obtained from other probes, previous rounds, etc.; it depends only on the robber's response to that single probe.)

The game's play is divided into a sequence of discrete \textit{rounds}.  Each round consists of two \textit{phases}:
\begin{itemize}
\item a \textit{probing phase} in which each cop selects a vertex of $G$ to probe and the robber simultaneously responds to all of the probes; and
\item a \textit{recontamination phase} in which the robber either remains in place or moves to a vertex adjacent to his current location.
\end{itemize}

If at any point during the game the cops have uniquely determined the robber's location, then the cops win the game.  Conversely, if the robber has a strategy to prevent this indefinitely, then the robber wins.  We require that the cops be able to guarantee determination of the robber's location in a bounded number of rounds; this prevents the cops from simply probing vertices at random and hoping to probe the robber's location through random chance.  Equivalently, we view the robber as omniscient, and we assume that he makes perfect moves armed with complete foreknowledge of the cops' actions. 

In this paper, we actually consider two different models of the directional localization game.  These models differ in the requirements placed on the robber's responses to the cops' probes.  In the \textit{partial-feedback} model, in response to a probe at $v$, the robber must provide \textbf{any} single vertex $w$ such that $w$ lies on some shortest path from $v$ to the robber's location.  In the \textit{full-feedback} model, we require more from the robber: his response to a probe at $v$ must contain \textbf{all} vertices in $N(v)$ that lie on a shortest path from $v$ to the robber's location.

When studying the game, we typically aim to determine, for a given graph $G$, the minimum number of cops needed to win the game on $G$.  Thus motivated, we define the \emph{partial-feedback directional localization number} of $G$, denoted $\dirloc(G)$, to be the minimum number of cops needed to win the game on $G$ in the partial-feedback model.  Similarly, the \emph{full-feedback directional localization number of $G$}, denoted $\dirloccomp(G)$, is the minimum number of cops needed to win in the full-feedback model.

As with many pursuit-evasion games with incomplete information, the directional localization game shares many similarities with classical graph searching games.  As such, we borrow some terminology from the realm of graph searching.  We say that a vertex is \emph{contaminated} at some point in time if that vertex could potentially contain the robber, taking into consideration all information the cops have received thus far.  A vertex that isn't contaminated is said to be \emph{clear}.  During the probing phase of a round, the cops may receive information that changes the state of a vertex from contaminated to clear; we refer to this as \emph{clearing} a vertex.  During the recontamination phase -- when the robber gets the opportunity to move to a new vertex -- vertices may change from clear to contaminated; we refer to this change as the \emph{recontamination} of those vertices.

The cops win the game when, and only when, the graph has exactly one contaminated vertex.  Thus, one could say that the cops' goal is to clear all but one vertex of the graph.  Viewing the game from this perspective, instead of playing against a single robber who moves from vertex to vertex, the cops can fight against a sea of contamination that spreads in all possible directions during each recontamination phase. This contamination simultaneously represents all possible movement strategies the the robber might employ. Thus, the robber's agency is limited to how he responds to each probe, and we may disregard the issue of how he moves throughout the graph; this often makes the game easier to analyze.

We sometimes refer to the set of all contaminated vertices at a given point in time as the \textit{robber territory}.  Given a graph $G$ and positive integer $k$, we denote by $R_{G,k}$ the robber territory at the start of round $k$ -- that is, prior to the probing phase -- and we denote by $R'_{G,k}$ the robber territory immediately after the probing phase of round $k$.  When the graph $G$ is clear from context, we sometimes write just $R_k$ and $R'_k$.\\

\subsection{Elementary Bounds}

In this subsection, we determine the directional localization numbers of several elementary classes of graphs.  Before this, however, we make an observation regarding the relationship between the partial-feedback and full-feedback models of the game.

\begin{proposition}\label{prop:complete_easier}
For every graph $G$, we have $\dirloccomp(G) \le \dirloc(G)$.
\end{proposition}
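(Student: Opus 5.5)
The plan is a strategy-simulation argument. Let $k = \dirloc(G)$ and fix a winning strategy $\mathcal{S}$ for $k$ cops in the partial-feedback model. I will show that the same $k$ cops can win in the full-feedback model by simulating $\mathcal{S}$, which gives $\dirloccomp(G) \le k = \dirloc(G)$.

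The simulation works as follows. In each round of the full-feedback game, the cops probe exactly the vertices that $\mathcal{S}$ prescribes, given the simulated history of partial-feedback responses so far. The robber answers each probe at $v$ with a full-feedback response $S_v$. Every such $S_v$ is nonempty: either the robber is at $v$, in which case $v \in S_v$, or the robber is elsewhere, in which case some neighbor of $v$ lies on a shortest path to him. The cops then choose a single vertex $w_v \in S_v$, taking $w_v = v$ whenever $v \in S_v$. They feed the responses $\{w_v\}$ back into $\mathcal{S}$ as if the robber had given these partial-feedback answers.

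The key check is that the simulated responses are legal. Suppose the robber's actual position is $r$. If $v \in S_v$, then $r = v$, and $\{v\}$ is a legal partial-feedback response. Otherwise $w_v \in N(v)$ lies on a shortest $v$--$r$ path, so again the response is legal. The robber's movement is the same in both games, so the actual robber together with the simulated responses forms a legitimate play of the partial-feedback game against $\mathcal{S}$.

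Since $\mathcal{S}$ is winning, within a bounded number of rounds the set of positions consistent with the simulated information shrinks to a single vertex. It remains to compare what the cops know in the two games. The full-feedback cops know strictly more, because they also hold the rest of each $S_v$, and the true robber position is consistent with all of their information. Hence their contaminated set is a subset of the simulated partial-feedback contaminated set. That subset is nonempty because it contains the robber, so it too is a single vertex, and the cops win within the same bound.

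The only delicate point is this legality-and-consistency step. One might worry that consistency in the two models is defined differently. What resolves it is that the robber's actual trajectory is a witness consistent with both the full and the simulated partial responses, so the contamination comparison goes through.
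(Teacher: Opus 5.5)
Your proposal is correct and takes the same approach as the paper. In both, the cops follow a winning partial-feedback strategy while keeping only one vertex from each full-feedback response; you also verify that the simulated responses are legal and that the full-feedback robber territory is contained in the simulated one, which the paper leaves implicit.
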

\begin{proof}
If $k = \dirloc(G)$, then $k$ cops can win the full-feedback game on $G$ by ignoring all but one (arbitrarily-chosen) vertex in the robber's response to each probe, and following a winning strategy for the partial-feedback game on $G$ based on these responses.
\end{proof}

We now determine the values of $\dirloc$ and $\dirloccomp$ on several elementary classes of graphs.  Before launching into the proof, we remind the reader that when a cop probes some vertex $v$, the probe returns $v$ itself if, and only if, the robber occupies $v$.  In other words, if the cops ever probe the robber's vertex, then the cops win immediately.

\begin{proposition}\label{prop:simple}\,
\begin{enumerate}
\item [(i)] $\dirloccomp(K_n) = \dirloc(K_n) = 1$ for all $n$.
\item [(ii)] $\dirloccomp(P_n) = \dirloc(P_n) = 1$ for all $n$.
\item [(iii)] $\dirloccomp(C_4) = 1$; $\dirloc(C_4) = 2$; and $\dirloccomp(C_n) = \dirloc(C_n) = 2$ for $n \ge 5$.
\item [(iv)] If $m \ge n$, then $\dirloccomp(K_{m,n}) = 1$ and $\dirloc(K_{m,n}) = n$.
\end{enumerate}
\end{proposition}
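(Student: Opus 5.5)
The plan is to treat each part separately. For the upper bounds I will give an explicit probing strategy. For the lower bounds I will give a robber response strategy that keeps at least two vertices contaminated forever, arguing in the ``sea of contamination'' language of Section~\ref{sec:defns}. For the equalities, Proposition~\ref{prop:complete_easier} will let me prove only an upper bound on $\dirloc$ and a lower bound on $\dirloccomp$. The one-cop lower bounds in (i), (ii), and the $\dirloccomp(C_4)$ and $\dirloccomp(K_{m,n})$ claims are trivial for nontrivial graphs, since with no probes every vertex stays contaminated.

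For (i), a single probe at any $v$ in $K_n$ returns either $v$ or the robber's vertex itself. This is because the only shortest path from $v$ to a neighbour is the edge, so one round suffices in either model. For (ii), write $P_n=v_1\cdots v_n$ and let one cop probe $v_2,v_3,\dots,v_{n-1}$ in successive rounds. I will maintain the invariant that before round $i$ the territory is contained in $\{v_{i-1},\dots,v_n\}$, with $v_{i-2}$ and everything to its left already cleared. A probe at $v_i$ that points left pins the robber at $v_{i-1}$. Otherwise the robber lies in $\{v_{i+1},\dots,v_n\}$ before moving, so after recontamination he is in $\{v_i,\dots,v_n\}$, which is the invariant for round $i+1$. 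On a path every response is a single vertex, so the two models coincide.

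For (iii) with $C_4$ in the full-feedback model, a probe at $v_1$ returns $\{v_1\}$, a single neighbour (the robber is there), or both neighbours (the robber is at the antipode), so one round wins. I will deduce both $\dirloc(C_4)=2$ and the partial-feedback part of (iv) from the $K_{m,n}$ argument, since $C_4=K_{2,2}$.

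For $C_n$ with $n\ge5$, the upper bound uses one cop who probes a fixed vertex $u$ every round. Then the robber can never occupy $u$ at a probing phase without being caught, so $C_n-u$ behaves like a path. The second cop sweeps this path exactly as in (ii), starting at a neighbour of $u$. For the lower bound $\dirloccomp(C_n)\ge2$, I will maintain the invariant that the territory before each probing phase contains four consecutive vertices $a,b,c,d$. Given a single probe at $v$, I claim there are two adjacent vertices in $\{a,b,c,d\}\setminus\{v\}$ that lie on the same side of $v$ and that are not antipodal to $v$. For such vertices, the full-feedback response is the same single neighbour of $v$, and the robber gives that response. Recontamination from two adjacent vertices then again yields four consecutive contaminated vertices. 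Verifying this claim for all positions of $v$ relative to the arc, including the antipodal vertex when $n$ is even, is the step I expect to be the main fiddly obstacle; I will check it by a short case analysis, using $n\ge5$.

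For (iv), let the parts be $A$ and $B$ with $|A|=m\ge n=|B|$. For full feedback, one cop probes distinct vertices of $A$ in successive rounds. A robber in $B$ is pinned at once, since the response is a single vertex. A robber in $A$ causes the probe to return all of $B$. He then either stays on a vertex of $A$, which will eventually be probed, or moves into $B$, where he is pinned by the next probe. Hence the cops win within $m$ rounds. For partial feedback, $n$ cops probe all of $B$. A robber in $B$ is hit directly, and a robber at $a\in A$ forces every probe to return $a$. For the lower bound with $n-1$ cops, in each round pick an unprobed $a\in A$ and an unprobed $b\in B$; they exist because $n-1<n\le m$. The robber answers every probe in $B$ with $a$ and every probe in $A$ with $b$. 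These answers are consistent with the robber being at either $a$ or $b$, because every vertex of $A$ lies on a shortest path between two vertices of $B$, and vice versa. Since $N[a]\cup N[b]=V$, after recontamination the whole graph is contaminated again, so the robber is never located. Taking $m=n=2$ gives $\dirloc(C_4)\ge2$, and the matching upper bound holds because two adjacent probes in $C_4$ locate the robber in one round.
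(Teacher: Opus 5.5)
Your overall route is the paper's. You use the same sweep on $P_n$, the same ``fixed probe plus sweeper'' strategy and four-consecutive-vertices invariant for $C_n$, and the same all-of-one-part strategy and $a,b$ robber strategy for $\dirloc(K_{m,n})$. Your contiguity argument for the four-vertex invariant is a tidier substitute for the paper's case split. However, two steps are false as written.

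First, for $\dirloccomp(K_{m,n})$ you probe the \emph{larger} part $A$. Your claim that ``a robber in $B$ is pinned at once'' needs the response $\{b\}$ to be distinguishable from the response $B$, which requires $n=|B|\ge 2$. Consider the star $K_{m,1}$ with $m\ge 2$, which the statement covers and part (i) does not. A probe at a leaf returns the center whether the robber is at the center or at any other leaf. So the center stays contaminated, recontaminates every leaf, and your sweep never terminates. The paper probes the smaller part instead. A robber there forces the response to be all of the larger part, and $m\ge 2$ is exactly what distinguishes this from a one-vertex response. Alternatively, you can treat $n=1$ separately by probing the center.

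Second, your closing claim that two adjacent probes locate the robber on $C_4$ in one round is wrong in the partial-feedback model. Suppose the cops probe $v_1,v_2$ and the robber is on $v_3$. He must answer $v_3$ at $v_2$, but he may answer $v_4$ at $v_1$. These answers are equally consistent with the robber being on $v_4$. Since $N[\{v_3,v_4\}]=V(C_4)$, repeating these probes never wins. The bound $\dirloc(C_4)\le 2$ is still true, but the correct reason is your own $K_{2,2}$ strategy: probe both vertices of one part, which in $C_4$ is an antipodal pair.

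A smaller point concerns the $C_n$ upper bound. Your phrase ``$C_n-u$ behaves like a path'' is accurate for the robber's movement but not for the information the sweeper receives, because shortest paths in $C_n$ can wrap through $u$. Label the cycle $v_0v_1\cdots v_{n-1}$ with $u=v_0$. Then a probe at $v_i$ returns $v_{i-1}$ whenever the robber is at $v_k$ with $k-i\ge n/2$, so a ``backward'' answer does not by itself pin the robber at $v_{i-1}$. What excludes these wrap-around candidates is the direction returned by the probe at $u$, which tells the cops which half of the cycle, as seen from $u$, contains the robber. Add that check; with it your argument coincides with the paper's.
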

\begin{proof}\,
\begin{enumerate}
\item [\textbf{(i)}] In both models of the game, probing any single vertex uniquely determines the location of the robber.\\

\item [\textbf{(ii)}] By Proposition \ref{prop:complete_easier}, it suffices to show that $\dirloc(P_n) \le 1$.  To do this, we explain how a single cop can locate a robber on $P_n$ in the partial-feedback game.  The claim follows from part (i) if $n \le 2$, so suppose $n \ge 3$.  Let $v_1, v_2, \dots, v_n$ denote the vertices of $P_n$, in order.  The cop first probes $v_2$.  If the robber occupies $v_2$, then the cop clearly wins.  If instead the probe points at $v_1$, then the robber can only occupy $v_1$, so again the cop wins.  Finally, suppose that the probe points at $v_3$.  The robber must occupy a vertex in $\{v_3, \dots, v_n\}$.  After the robber's ensuing move, he must occupy a vertex in $\{v_2, \dots, v_n\}$.  The cop next probes $v_3$; similarly to before, if the robber occupies $v_2$ or $v_3$, then the cop wins.  Otherwise, after the robber's ensuing move, he must occupy a vertex in $\{v_3, \dots, v_n\}$.  The cop now probes $v_4$, and so on; by proceeding in this matter, the cop must eventually locate the robber and thereby win the game.\\

\item [\textbf{(iii)}] It is clear by inspection that $\dirloccomp(C_4) = 1$ and $\dirloc(C_4) = 2$.  For larger cycles, by Proposition \ref{prop:complete_easier} it suffices to argue that for $n \ge 5$, we have $\dirloccomp(C_n) > 1$ and $\dirloc(C_n) \le 2$.

We begin by arguing that $\dirloccomp(C_n) > 1$.  Consider the full-feedback game on $C_n$; we give a strategy whereby the robber can ensure that for all $k$, the set $R_k$ -- that is, the robber territory prior to the recontamination phase of round $k$ -- contains some four consecutive vertices along the cycle.  This guarantees that the robber can never be located: if the cop ever narrowed the robber's location down to a single vertex, then after the ensuing robber turn, the robber territory would only contain three vertices, not four.  

To prove this claim, we use induction on $k$.  When $k=1$, the claim is clearly true.  Fix some value of $k$, and suppose that $R_k$ contains four consecutive vertices along the cycle; we will show how the robber can ensure that the same is true of $R_{k+1}$.  Let the vertices of $C_n$ be labeled $v_0, v_1, \dots, v_{n-1}$ in order, and suppose by symmetry that $\{v_{n-2}, v_{n-1}, v_0, v_1\} \subseteq R_k$.  Suppose the cop probes some vertex $v_i$.  If $2 \le i < n/2$, then the robber responds with $v_{i-1}$; this response is consistent with both $v_0$ and $v_1$, hence $R_{k+1} \supseteq N[\{v_{0}, v_{1}\}] = \{v_{n-1}, v_{0}, v_{1}, v_{2}\}$.  If instead $n/2 \le i < n-2$, then the robber responds with $v_{i+1}$, which is consistent with both $v_{n-2}$ and $v_{n-1}$; thus $R_{k+1} \supseteq \{v_{n-3}, v_{n-2}, v_{n-1}, v_0\}$.  Similarly, if $i = n-2$ (respectively, $i=n-1$), then the robber responds with $v_{n-1}$ (resp. $v_0$); this response is consistent with both $v_0$ and $v_1$, so $R_{k+1} \supseteq \{v_{n-1}, v_{0}, v_{1}, v_{2}\}$.  Finally, if $i = 0$ the robber responds with $v_{n-1}$ and if $i = 1$ the robber responds with $v_0$; in either case the robber's response is consistent with both $v_{n-2}$ and $v_{n-1}$, hence $R_{k+1} \supseteq \{v_{n-3}, v_{n-2}, v_{n-1}, v_{0}\}$.  In any case, the robber has ensured that $R_{k+1}$ contains four consecutive vertices, as claimed.  It follows that $\dirloccomp(C_n) > 1$. 

Next, we argue that $\dirloc(C_n) \le 2$ by giving a strategy for two cops to locate a robber in the partial-feedback game on $C_n$.  As before, let the vertices of $C_n$ be labeled $v_0, v_1, \dots, v_{n-1}$ in order.  On the cops' first turn, the cops probe $v_{n-1}$ and $v_1$.  If the robber is on $v_{n-1}, v_0,$ or $v_1$, then this probe uniquely determines his location and the cops win.  Otherwise, after the ensuing recontamination phase, the robber could occupy any vertex in $\{v_1, v_2, \dots, v_{n-1}\}$.  On the cops' next turn, they probe $v_{n-1}$ and $v_2$.  If the robber is on $v_{n-1}$, $v_1$, or $v_2$ then the cops win; otherwise, after recontamination, the robber could occupy any vertex in $\{v_2, v_3, \dots, v_{n-1}\}$.  On the cops' next turn they probe $v_{n-1}$ and $v_3$, and so on.  Note that the robber territory shrinks by three vertices during each probing phase and grows by two vertices during each recontamination phase; the robber territory cannot keep shrinking forever, so eventually the cops must locate the robber.\\

\item [\textbf{(iv)}]  Let the partite sets of $K_{m,n}$ be $X$ and $Y$ with $\size{X} = m \ge n = \size{Y}$, and let $Y = \{y_1, y_2, \dots, y_n\}$.  We may suppose that $m \ge 2$, since otherwise the result follows from part (i).  In the full-feedback model of the game, it suffices to give a strategy for a single cop to locate the robber.  The cop begins by probing $y_1$.  Let $v$ denote the robber's vertex.  If $v \in X$, then the probe must return $v$ (and only $v$); if $v \in Y$, then every vertex of $X$ lies on a shortest $y_1,v$-path, so the probe must return all vertices in $X$.  Since $\size{X} \ge 2$, the cop can distinguish between these possibilities; hence, if $v \in X$, then the cop's probe uniquely determines the robber's location.  Otherwise, $v$ must be some vertex of $Y$ other than $y_1$.  Note that after the robber's move, he may occupy any vertex of $X$ or any vertex of $Y$ other than $y_1$.  Next, the cop probes $y_2$.  Once again, if the robber occupies a vertex in $X$, then the probe uniquely determines his location; otherwise, the robber must occupy some vertex of $Y$ other than $y_1$ or $y_2$.  In the next round of the game the cop probes $y_3$, and so on; eventually, the cop must locate the robber.  Consequently, $\dirloccomp(K_{m,n}) = 1$.  

For the partial-feedback model, we first argue that $\dirloc(K_{m,n}) \le n$.  With $n$ cops, the cops can simply probe every vertex of $Y$.  If the robber occupies a vertex in $Y$, then the cops have probed the robber's vertex and thereby win; if the robber is in $X$, then every probe must point to the robber's vertex, and again the cops win.  For the lower bound, we argue that $n-1$ cops do not suffice.  To do this, we argue that at the beginning of each round, the robber territory contains all vertices of the graph.  This is clearly true at the beginning of the first round.  Suppose, then, that it is true at the beginning of round $k$ for some $k$; we will argue that it is true at the beginning of round $k+1$ as well.  In round $k$, the cops probe $n-1$ vertices, so there must be some $x \in X$ and $y \in Y$ that do not get probed.  The robber returns $x$ to all probes in $Y$ and returns $y$ to all probes in $X$.  Note that both responses are consistent with both $x$ and $y$, so the cops have failed to determine the robber's location.  Moreover, immediately prior to recontamination, the robber territory contains a vertex from each partite set; hence, after recontamination, the robber territory contains all vertices of the graph, as claimed.  It follows that $n-1$ cops do not suffice to locate the robber, so $\dirloc(K_{m,n}) \ge n$. 
\end{enumerate}
\end{proof}

Finally, we show that one cop can win the directional localization game on any chordal graph.  Recall that a \textit{chordal} graph is one in which every cycle of length four or greater contains a chord, i.e. an edge between nonconsecutive vertices on the cycle.  In the following proof, we will actually use an alternative characterization of chordal graphs.  We call a vertex \textit{simplicial} if its neighborhood is a clique.  A \textit{simplicial elimination ordering} for a graph $G$ is an ordering $v_1, \dots, v_n$ of the vertices in $G$ such that each $v_i$ is simplicial in the subgraph of $G$ induced by $\{v_1, \dots, v_{i-1}\}$.  It is well-known that a graph is chordal if and only if it admits a simplicial elimination ordering. 

\begin{theorem}\label{thm:chordal}
If $G$ is a chordal graph, then $\dirloccomp(G) = \dirloc(G) = 1$.
\end{theorem}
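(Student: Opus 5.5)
By Proposition~\ref{prop:complete_easier} it suffices to show $\dirloc(G) \le 1$, i.e.\ to give a strategy for a single cop in the partial-feedback game. The plan is to generalize the strategy for paths in Proposition~\ref{prop:simple}(ii), and also the natural strategy on trees, where one repeatedly probes the vertex returned by the previous probe and thereby confines the robber to an ever-smaller branch. For chordal graphs the role of the branches of a tree is played by the clique-tree structure. Equivalently, via a simplicial elimination ordering, every minimal vertex separator of $G$ is a clique, and deleting a simplicial vertex preserves distances in the remaining graph.

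The key steps, in order, are as follows. First I will record two facts about chordal graphs. (a) If $v$ is simplicial, then $G - v$ is an isometric subgraph of $G$, since any shortest path through $v$ can be shortcut through the clique $N(v)$. (b) If a probe at $u$ returns $w \ne u$, then the robber lies in the set $B(u,w)$ of vertices $x$ with $\dist(w,x) = \dist(u,x)-1$. Second, I will use (a) to show that such sets behave like branches of a tree. The claim is that for a suitable choice of $u$ (a vertex of a clique separator $S$, or a simplicial vertex of the current piece), $B(u,w)$ lies in $S$ together with one component of $G - S$. Third, I will define the invariant. At the start of each round, the robber territory is contained in $C \cup S$, where $S$ is a clique and $C$ is a component of $G - S$; moreover, $S$ is "guarded", in the sense that the robber can only be in $S$ if he has just stepped there from $C$. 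The cop then probes a vertex of $S$ (or of the clique adjacent to $C$ in the clique tree) chosen so that the response either pins the robber to a single vertex of a clique or pushes the territory into a strictly smaller pair $(C',S')$, with $C' \subsetneq C$ in the clique tree ordering. Since $G$ is finite, the territory cannot shrink forever, so after boundedly many rounds the cop locates the robber.

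The main obstacle is the recontamination phase. On a path, recontamination adds only the single vertex just cleared. In a chordal graph, by contrast, the robber can step from $C$ back into any vertex of the separator clique $S$, and a single probe must then distinguish ``robber in $S$'' from ``robber still in $C$''. My first attempt will be to probe a vertex $u \in S$ of the same clique. If the robber is on another vertex $s \in S$, then $\dist(u,s)=1$, so the only admissible response is $s$ itself. On the other hand, a robber deeper in $C$ produces a response that is either a vertex of $C$ adjacent to $u$, or a vertex $s' \in S$. I then need to show that the latter case still leaves the territory inside a strictly smaller branch, or rule it out by choosing $u$ to be the vertex of $S$ that is simplicial with respect to the part of the graph on the $C$ side. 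Making this case analysis precise, so that a response from $S$ is unambiguous, is where I expect the real work to lie. If it fails, I will fall back on an induction on $|V(G)|$ that removes a simplicial vertex $v$ and uses fact (a) to simulate a winning strategy on $G - v$, intercepting the robber when he visits $v$.
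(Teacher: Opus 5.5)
Your main route is not yet a proof. The step you flag as ``where the real work lies'' is exactly where it fails, and it also fails in the case you treat as harmless: a response pointing into $C$.

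Take $G=P_n^2$, with vertices $1,\dots,n$ and $i\sim j$ iff $0<|i-j|\le 2$. This graph is chordal and $\dist(i,j)=\lceil |i-j|/2\rceil$. Suppose the territory at the start of a round is $\{k-2,\dots,n\}$ with $4\le k\le n-2$. This fits your invariant with $S=\{k-2,k-1\}$ and $C=\{k,\dots,n\}$.
\begin{itemize}
\item If you probe $k-1$, the robber answers $k\in C$. This answer is consistent exactly with $\{k,k+2,k+4,\dots\}$, whose closed neighborhood is again $\{k-2,\dots,n\}$, so the position simply repeats.
\item If you probe $k-2$ (the vertex of $S$ that is simplicial on the $C$ side), the robber answers $k-1\in S$. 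This is consistent with $\{k-1,k+1,k+3,\dots\}$, and after recontamination the territory is $\{k-3,\dots,n\}$, which is strictly larger.
\end{itemize}
So no probe in the separator forces progress. Here one must probe inside $C$ (probing $k$ works). The ``guarded separator'' invariant, the claim that $B(u,w)$ lies in one branch, and the progress measure would all have to be reformulated and proved, and none of that is in the proposal.

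Your fallback is the route the paper actually takes, but your one-sentence description omits both ideas that make it work. Also, ``intercepting the robber when he visits $v$'' is not available to the cop. A probe at $w\neq v$ exposes a robber at $v$ only when the answer is $v$ itself, which requires $w\in N(v)$. Otherwise, any answer $x\neq v$ that is legal for a robber at $v$ is also legal for a robber at the penultimate vertex $u\in N(v)$ of a shortest $w,v$-path through $x$. So the robber can sit on $v$ undetected. What you need is the following.
\begin{itemize}
\item[(i)] If $v$ is contaminated at the start of a round, then so is all of $N(v)$ (induction on rounds, using $N(v)\subseteq N[u]$ for each $u\in N(v)$). Combine this with your fact (a) and the penultimate-vertex observation. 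It shows that, as long as no probe returns $v$, the contaminated sets of the real game on $G$ and the imagined game on $G-v$ agree on $V(G)\setminus\{v\}$. The reason is that contamination spreading out of $v$ is mirrored in $G-v$ by contamination spreading out of the contaminated, consistent neighbor $u$.
\item[(ii)] When the imagined game pins the robber to $z$, the vertex $v$ may still be contaminated in $G$. The same argument then forces $u=z$, so $z\in N(v)$. After recontamination the territory is $N[v]\cup N[z]=N[z]$ by simpliciality, and one further probe at $z$ wins.
\end{itemize}
With (i) and (ii) the induction on $|V(G)|$ goes through; without them the fallback is only a plan.
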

\begin{proof}
By Proposition \ref{prop:complete_easier}, it suffices to show that $\dirloc(G) = 1$, so we give a strategy for one cop to win on $G$ in the partial-feedback model.  
We use induction on $\size{V(G)}$. The cop trivially wins if $\size{V(G)} = 1$.  Suppose otherwise, and let $v$ be the last vertex in a simplicial elimination ordering for $G$.  Note that $G-v$ itself has a simplicial elimination ordering and is thus chordal; hence, one cop can win the partial-feedback game on $G-v$.  We will show that a single cop can win on $G$ as well.  The cop will use a winning strategy on $G-v$ to guide her play on $G$.  To this end, we consider two games: the ``real'' game on $G$, and an ``imagined'' game on $G-v$. Before explaining the cop's strategy, we make a few observations.  
\begin{itemize}
\item First, note that for any vertices $a$ and $b$ in $G-v$, there is no shortest $a,b$-path in $G$ that uses $v$.  This is because in any such path, $v$ must be preceded by some neighbor $u$ and followed by some neighbor $w$; since $v$ is simplicial in $G$, vertices $u$ and $w$ must be adjacent, so traveling directly from $u$ to $w$ without visiting $v$ would yield a shorter $a,b$-path.

\medskip

\item Next, suppose that in both games, the cop probes some vertex $w$ and the robber responds with $x$.  We claim that a vertex $z$ in $V(G) - \{v\}$ is consistent with the probe in $G$ if and only if it is consistent with the probe in $G-v$.  This is an immediate consequence of the preceding observation: the shortest paths $w,z$-paths in $G$ are the same as those in $G-v$, and $z$ is consistent with the probe if and only if some such path includes $x$.

\medskip

\item Finally, note that in the game on $G$, if $v$ is contaminated at the beginning of some round, then all vertices of $N(v)$ must also be contaminated.  This is because $v$ is contaminated at the beginning of round $k$ if and only if either: $k=1$ (in which case all vertices are contaminated); $v$ was contaminated prior to the recontamination phase of round $k-1$ (in which case $v$ spread its contamination to all of $N(v)$); or during the recontamination phase of round $k-1$, contamination spread to $v$ from some neighbor $u$ (in which case contamination also spread from $u$ to all of $N(v)$, because $N(v) \subseteq N[u]$ due to $v$ being simplicial).
\end{itemize}

We are now ready to give the cop's winning strategy on $G$.  As long as the imagined game on $G-v$ has not yet ended, the cop plays identically in both games.  More precisely, suppose that in some round of the game, the cop's winning strategy on $G-v$ tells her to probe a vertex $w$.  The cop does so, and then probes $w$ in the real game on $G$ as well.  Suppose that the robber responds, in the real game, with some vertex $x$.  If $x = v$, then the only vertex consistent with the probe is $v$ itself, so the cop wins the real game.  Otherwise, the cop imagines that the robber responded with $x$ in the game on $G-v$.  Play continues in this manner unless and until the cop wins the game on $G-v$; we will explain later how the cop plays past that point.

Before that, we claim that at all points during the game, so long as the cop has not yet won either game, a vertex $z$ in $V(G)-\{v\}$ is contaminated in the game on $G-v$ if and only if it is also contaminated in the game on $G$.  This is clearly true at the beginning of the first round.  Fix some positive integer $k$, and assume that the claim is true at the beginning of round $k$; we will show that it is also true after the probing phase of round $k$ and at the beginning of round $k+1$.  Suppose that during round $k$, the cop probes vertex $w$ and the robber responds with $x$.  Fix any vertex $z$ in $V(G) - \{v\}$.  As argued above, $z$ is consistent with the probe in $G$ if and only if it is also consistent with the probe in $G-v$; hence, after the probing phase, $z$ is contaminated in $G$ if and only if it is contaminated in $G-v$.  Now consider the recontamination phase.  Recontamination happens identically in both graphs except that in $G$, contamination could potentially spread from $v$ to its neighbors.  However, if $v$ is contaminated in $G$, then it must have been consistent with the probe.  We know that $x \not = v$ (since otherwise the real game would be over), so any shortest path from $w$ to $v$ must pass through some neighbor $u$ of $v$, causing $u$ to be consistent with the probe as well.  As argued above, if $v$ was contaminated at the beginning of round $k$, then so were all neighbors of $v$, in particular $u$.  Thus $u$ was both contaminated and consistent with the probe in $G$ and thus also in $G-v$, and so $u$ will spread contamination in $G-v$ to all vertices of $N_G(v)$.  It follows that the claim holds at the beginning of round $k+1$, as desired.

Since the cop follows a winning strategy for the game on $G-v$, eventually she determines the robber's location in that game.  Suppose that after the probing phase of round $k$, the cop determines that the robber occupies vertex $z$ in $G-v$.  Recall that a vertex of $G-v$ is contaminated in the game on $G-v$ if and only if it is also contaminated in the game on $G$.  If $z$ is the only contaminated vertex in the game on $G$, then the cop wins that game as well.  The only other possibility is that both $z$ and $v$ are contaminated in the game on $G$.  This can only happen if $z \in N(v)$: since $v$ is contaminated, it must be that all of $N[v]$ was contaminated prior to the probing phase and that $z$ was consistent with the probe; as argued in the previous paragraph, it follows that some neighbor of $z$ was also consistent with the probe and is thus also contaminated after the probing phase.  In this case, after the recontamination phase of round $k$, the robber territory will be $N[\{v,z\}]$; however, since $z \in N(v)$ and $v$ is simplicial, we have $N[\{v,z\}] = N[z]$.  
Thus, the cop can win the game on $G$ by probing $z$ in round $k+1$; whichever vertex the probe returns must be the robber's location.
\end{proof}

\section{General Bounds}\label{sec:dirloc_genbounds}

In this section, we give general bounds on $\dirloc(G)$ and $\dirloccomp(G)$ based on various structural properties of $G$.  Our focus is on upper bounds for $\dirloc(G)$; however, note that by Proposition \ref{prop:complete_easier}, any upper bound on $\dirloc(G)$ applies also to $\dirloccomp(G)$.

We begin by exploring the connection between the partial-feedback directional localization number of a graph and its \emph{degeneracy}.  Recall that a graph $G$ is \emph{$k$-degenerate} if $\delta(H) \le k$ for every subgraph $H$ of $G$; the \emph{degeneracy} of $G$ is the maximum $k$ such that $G$ is $k$-degenerate.

\begin{theorem}\label{thm:degen_upper}
If $G$ is $k$-degenerate, then $\dirloc(G) \le k+1$.
\end{theorem}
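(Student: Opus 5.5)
The plan is to mimic the inductive ``peel off a vertex'' strategy from the proof of Theorem~\ref{thm:chordal}, replacing simpliciality by low degree and paying for the loss with extra cops. Since $G$ is $k$-degenerate, there is an ordering $v_1, v_2, \dots, v_n$ of $V(G)$ in which each $v_i$ has at most $k$ neighbors in $\{v_{i+1}, \dots, v_n\}$: repeatedly delete a vertex of minimum degree, which is at most $k$. For each $i$ write $U_i = N(v_i) \cap \{v_{i+1}, \dots, v_n\}$, so $\size{U_i} \le k$. The $k+1$ cops sweep through this ordering. In round $i$ of the basic scheme they probe the $\le k+1$ vertices of $\{v_i\} \cup U_i$. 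If the robber sits on a probed vertex the cops win immediately, since that probe returns the vertex itself. Otherwise every vertex of $\{v_i\} \cup U_i$ becomes clear after the probing phase, so none of them can spread contamination to $v_i$ during the ensuing recontamination phase.

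The invariant I would aim for is that at the start of round $i$, the robber territory lies inside $\{v_i, \dots, v_n\}$, perhaps together with a small, explicitly controlled set of exceptional vertices. The trouble is that a previously cleared vertex $v_j$ with $j < i$ may have a neighbor $x = v_\ell$ with $\ell > i$ that is still contaminated. During recontamination, $x$ would then re-infect $v_j$. Nothing in the degree bound for $v_i$ alone prevents this.

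This is where the \emph{directional} information must be used, and I expect it to be the main obstacle. I would exploit the partial-feedback responses of the probes in $U_i$ and at $v_i$. Suppose a probe at $u$ returns $w$. Then the robber lies in the ``cone'' $\{z : w \text{ lies on a shortest } u,z\text{-path}\}$, which contains no vertex farther from $w$ than from $u$. The first thing I would try is to show that, after probing $\{v_i\} \cup U_i$, the consistent vertices adjacent to $\{v_1, \dots, v_{i-1}\}$ can be forced into the cone of a single response. The cops could then spend the following round re-probing along that response. I would organize this as a nested induction: the territory is always contained in one cone rooted at a cleared vertex. Each round either strictly shrinks the cone (measured, say, by $\sum_{z} \dist(z, \cdot)$ or by the index of the earliest contaminated vertex) or advances the sweep index.

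If the direct sweep proves too lossy, the fallback is to follow the structure of the proof of Theorem~\ref{thm:chordal} literally. Let $v$ be a vertex of degree at most $k$, and play an imagined game on $G - v$, which is $k$-degenerate, with $k+1$ cops. On rounds where the imagined strategy does not use all of its cops, the spare cops probe $v$ and $N(v)$. The key lemma to prove would be that contamination in $G$ and in $G - v$ agree off $N[v]$. The extra cops make the discrepancy confined to $N[v]$ harmless, because a shortest path can only detour through $v$ via its at most $k$ neighbors, and those neighbors are being probed.

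In either version, the routine parts are the bookkeeping that the cops never need more than $k+1$ probes per round and the final termination argument, which would be some potential decreasing in each round, as in Proposition~\ref{prop:simple}(ii). The delicate point is controlling recontamination of already-cleared vertices through their at most $k$ later neighbors.
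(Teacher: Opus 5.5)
There is a genuine gap: neither of your two routes produces a working strategy. In the sweep you correctly spot the obstruction (a cleared $v_j$ with $j<i$ being reinfected through a still-contaminated later neighbor), but the ``cone'' idea is never turned into an invariant or a decreasing potential, and the degree bound gives no control here. What would have to be clear after each probing phase is the whole set of later vertices adjacent to the prefix $\{v_1,\dots,v_i\}$, not just $U_i$. In a large grid (a $2$-degenerate graph), under any ordering, this set eventually has far more than $k+1=3$ vertices.

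The fallback breaks in two places. First, nothing guarantees spare cops: the imagined strategy on $G-v$ may use all $k+1$ cops in every round, and $N[v]$ alone can have $k+1$ vertices. Second, and more fundamentally, the proof of Theorem~\ref{thm:chordal} uses simpliciality precisely to ensure that no shortest path between vertices of $G-v$ passes through $v$. For a vertex that merely has small degree this fails: distances in $G-v$ can grow, and $G-v$ can even be disconnected. So a legal response in $G$ need not be a legal response in $G-v$, the imagined game cannot be updated, and your claimed agreement of contamination off $N[v]$ has no justification.

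The missing idea is to abandon clearing and instead chase the robber with a single moving ``base'' vertex $b$. Measure progress by $d=\dist(b,r)$, where $r$ is the robber's position when the probes are made. With your ordering, if $b=v_i$, probe $v_i$ together with the at most $k$ vertices of $U_i$. There are two cases.
\begin{itemize}
\item If the probe at $v_i$ returns some $u\in U_i$, then either $d=1$ and the robber sits on the probed vertex $u$, or the probe at $u$ returns a vertex $w$ with $\dist(w,r)=d-2$. Taking $w$ as the new base, its distance to the robber is at most $d-1$ after he moves.
\item If instead the probe at $v_i$ returns an unprobed neighbor, that neighbor must be some $v_j$ with $j<i$. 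Taking it as the new base keeps the distance at most $d$ while strictly decreasing the index.
\end{itemize}
The second case cannot occur more than $n-1$ times in a row, so $d$ drops at least once every $n$ rounds. The cops win as soon as $d=0$ during a probing phase, which happens within a bounded number of rounds.

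This is the paper's argument, with the ordering reversed. The at most $k$ neighbors on one side of the ordering are probed so that a two-step gain is guaranteed whenever the base's probe points toward them. The ordering itself bounds how long the robber can keep the pointer on the other side.
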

\begin{proof}
Let $G$ be $k$-degenerate, and let $v_1, v_2, \dots, v_n$ denote the vertices of $G$, ordered so that each $v_i$ has degree at most $k$ in the subgraph of $G$ induced by $\{v_1, v_2, \dots, v_i\}$.  (This is possible because $G$ is $k$-degenerate; one can construct such an ordering by choosing each $v_i$, from $v_n$ down to $v_1$, to be a vertex of minimum degree in the subgraph induced by $\{v_1, \dots, v_i\}$.) 

We give a strategy for $k+1$ cops to locate a robber on $G$ in the partial-feedback game.  Throughout the game, the cops will distinguish one vertex of $G$ as their ``base''.  Initially, the cops choose an arbitrary vertex $v_i$ to be the base. In the first round of the game, the cops probe every neighbor $v_j$ of $v_i$ such that $j < i$, along with $v_i$ itself (which is possible since $v_i$ has at most $k$ neighbors $v_j$ such that $j < i$).  Let $d$ denote the distance from $v_i$ to the robber's position.  Note that the cops do not know $d$, and as such, they cannot use it to inform their strategy; however, it will be helpful to refer to $d$ in our analysis of the cops' strategy.

If $d = 0$, then the cops have probed the robber's vertex and thereby won, so suppose $d \ge 1$.  We consider two cases.  
\begin{itemize}
\item \textbf{Case 1:} the probe at $v_i$ returns some neighbor $v_j$ with $j < i$.  In this case, the cops have probed $v_j$.  If $d=1$, then the robber must occupy $v_j$; hence the cops have probed the robber's vertex and won.  Suppose instead that $d \ge 2$.  Let $w$ denote the vertex returned by the probe at $v_j$, and note that the distance from $w$ to the robber must be $d-2$.  Hence, after the robber moves, the distance from $w$ to the robber will be at most $d-1$.  The cops now take $w$ to be their new base vertex and repeat their strategy.  Note that in this case, the distance from the base to the robber decreases.

\medskip

\item \textbf{Case 2:} the probe at $v_i$ returns some neighbor $v_j$ with $j > i$.  In this case, the cops have not probed $v_j$.  However, the distance from the robber to $v_j$ is $d-1$, so after the robber's move, it will be at most $d$.  The cops now move their base to $v_j$ and repeat their strategy.  Note that in this case, the distance from the base to the robber need not decrease; however, it cannot increase.  Moreover, the cops' base necessarily moves from $v_i$ to $v_j$ with $j > i$ -- that is, it moves ``forward'' in the sequence $v_1, v_2, \dots, v_n$.  
\end{itemize}
Each time the cops enter Case 1, the distance from the base to the robber decreases, and each time they enter Case 2, the distance from the base to the robber does not increase.  Hence, if the cops enter Case 1 often enough, then they necessarily locate the robber.  Moreover, each time the cops enter Case 2, the base moves ``forward'' in the sequence $v_1, v_2, \dots, v_n$.  Hence the cops cannot enter Case 2 more than $n-1$ times consecutively, so they must enter Case 1 at least once every $n$ rounds; it follows that they eventually enter Case 1 enough times to win the game.
\end{proof}

The following corollaries follow immediately from Theorem \ref{thm:degen_upper}.
\begin{corollary}\label{cor:outerplanar_degen}
If $G$ is outerplanar, then $\dirloc(G) \le 3$.
\end{corollary}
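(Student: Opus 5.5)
The plan is to apply Theorem \ref{thm:degen_upper} directly; all the work is in showing that every outerplanar graph $G$ is $2$-degenerate. Once that is established, Theorem \ref{thm:degen_upper} with $k=2$ gives $\dirloc(G) \le 3$ at once. (As the paper does throughout, $G$ is assumed connected so that the game is defined. Connectivity of $G$ plays no role in the degeneracy argument, which treats arbitrary subgraphs.)

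To prove $2$-degeneracy, I must show that every subgraph $H$ of $G$ satisfies $\delta(H) \le 2$. I will proceed in the following order.

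\textbf{Step 1: closure under subgraphs.} A subgraph of an outerplanar graph is outerplanar. This is immediate, since an outerplanar embedding of $G$ restricts to one of $H$ in which every vertex still lies on the outer face. It therefore suffices to show that every outerplanar graph $H$ has a vertex of degree at most $2$.

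\textbf{Step 2: reduction to $2$-connected pieces.} If $H$ has at most $3$ vertices, the claim is trivial. If $H$ is disconnected or has a cut vertex, I pass to a leaf block $B$ of its block--cutpoint tree (or a component that is a single block). Every vertex of $B$ other than its cut vertex has the same degree in $B$ as in $H$. If $B$ is a single edge, then its non-cut endpoint has degree $1$ in $H$ and we are done. Otherwise $B$ is $2$-connected and outerplanar.

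\textbf{Step 3: two nonadjacent degree-$2$ vertices.} Here the outer face boundary of $B$ is a Hamiltonian cycle $C$, and every other edge is a non-crossing chord of $C$. I will prove, by induction on the number of chords, that when $|V(B)| \ge 4$ there are two nonadjacent vertices of degree $2$. If there are no chords, this holds because $C$ has at least $4$ vertices. If there is a chord $uv$, it splits $B$ into two smaller outerplanar $2$-connected graphs sharing the edge $uv$. Each side either is a triangle, whose vertex other than $u$ and $v$ has degree $2$, or, by induction, contains two nonadjacent degree-$2$ vertices, at least one of which avoids $\{u,v\}$. That vertex has degree $2$ in $B$ as well, so each side contributes one. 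Having two such vertices, not just one, is what lets the induction absorb the fact that the shared endpoints $u,v$ of the chord may gain degree.

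Choosing such a vertex that is not the cut vertex of the leaf block gives a vertex of degree at most $2$ in $H$.

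An alternative route is Euler's formula: adding a vertex adjacent to all of $H$ yields a planar graph, which gives $|E(H)| \le 2|V(H)| - 3$ and hence a vertex of degree at most $3$. That only yields $3$-degeneracy, so the chord induction above is the route I will take. The main obstacle is Step 3, keeping the degree-$2$ vertex away from the cut vertex and the chord endpoints, and the two-vertex strengthening handles it. Since this is a standard fact, the paper will likely just cite it.
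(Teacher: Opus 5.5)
Your proposal is correct and follows the paper's argument: the paper deduces the corollary immediately from Theorem \ref{thm:degen_upper} with $k=2$, taking the $2$-degeneracy of outerplanar graphs as a standard fact. Your chord induction is a valid justification of that fact; it needs two small additions, both routine: the two degree-$2$ vertices contributed by the two sides are nonadjacent, since an edge between them would cross the chord $uv$, and a triangle leaf block, which your $|V(B)| \ge 4$ claim does not cover, has two non-cut vertices of degree $2$.
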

\begin{corollary}\label{cor:planar_degen}
If $G$ is planar, then $\dirloc(G) \le 6$.
\end{corollary}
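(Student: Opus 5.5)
The plan is to deduce the statement directly from Theorem \ref{thm:degen_upper}. That theorem reduces everything to a degeneracy bound, so it suffices to show that every planar graph is $5$-degenerate. Then Theorem \ref{thm:degen_upper} with $k=5$ gives $\dirloc(G) \le 6$. Since degeneracy concerns all subgraphs $H$ of $G$, and every subgraph of a planar graph is planar, it is enough to prove that every planar graph $H$ satisfies $\delta(H) \le 5$.

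For this I will use the standard edge bound from Euler's formula. A simple planar graph on $n \ge 3$ vertices has at most $3n-6$ edges. I apply this to each connected component, or note that it holds for disconnected graphs as well by adding edges to connect components while preserving planarity, which only increases the edge count.

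Then $\sum_{v} \deg_H(v) = 2\size{E(H)} \le 6n - 12 < 6n$, so some vertex has degree at most $5$. When $n \le 2$ the claim $\delta(H) \le 5$ is trivial. Hence every planar graph is $5$-degenerate.

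The only point requiring any care is that subgraphs $H$ in the definition of degeneracy need not be connected, even though the paper assumes the game board $G$ is connected. The edge bound $\size{E(H)} \le 3n-6$ (for $n\ge 3$) still holds for disconnected planar graphs, as argued above, so this is not a real obstacle. Everything else is immediate from Theorem \ref{thm:degen_upper}.
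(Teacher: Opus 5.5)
Your proposal is correct and follows the paper's approach exactly: the paper states this corollary as an immediate consequence of Theorem~\ref{thm:degen_upper}, implicitly using the standard fact that planar graphs are $5$-degenerate. Your Euler-formula argument, including the handling of disconnected subgraphs, simply supplies the proof of that standard fact.
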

\begin{corollary}\label{cor:treewidth_degen}
For every graph $G$, we have $\dirloc(G) \le \tw(G)+1$, where $\tw(G)$ denotes the treewidth of $G$.
\end{corollary}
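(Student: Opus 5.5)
The plan is to derive Corollary \ref{cor:treewidth_degen} directly from Theorem \ref{thm:degen_upper}. It suffices to show that every graph $G$ is $\tw(G)$-degenerate. Then Theorem \ref{thm:degen_upper}, applied with $k = \tw(G)$, gives $\dirloc(G) \le \tw(G)+1$ immediately.

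To show that a graph of treewidth $t$ is $t$-degenerate, I would first note that treewidth is monotone under taking subgraphs. A tree decomposition of $G$ of width $t$ restricts to a tree decomposition of any subgraph $H$: intersect every bag with $V(H)$. So it is enough to show that every graph $H$ with $\tw(H) \le t$ has a vertex of degree at most $t$.

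Take a tree decomposition $(T, \{B_x\})$ of $H$ of width at most $t$, and assume it is minimal in the standard sense: no bag is contained in an adjacent bag. Any adjacent pair violating this can be contracted, and this does not increase the width. If $T$ has a single node, then $H$ has at most $t+1$ vertices, so every vertex has degree at most $t$. Otherwise, let $x$ be a leaf of $T$ with neighbor $y$. By minimality there is a vertex $u \in B_x \setminus B_y$. By the connectivity (subtree) property, $u$ lies in no bag other than $B_x$. Every edge at $u$ must be covered by some bag, and that bag must contain $u$, so it is $B_x$. Hence $N_H(u) \subseteq B_x \setminus \{u\}$, which gives $\deg_H(u) \le t$.

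The only step that needs any care is the reduction to a minimal decomposition, which guarantees that the leaf bag contains a vertex private to it. This is routine. Alternatively, one can avoid it by using the standard fact that $G$ is a subgraph of a chordal graph with clique number $\tw(G)+1$. A simplicial elimination ordering of that chordal graph then gives each vertex at most $\tw(G)$ earlier neighbors, which is exactly the ordering used in the proof of Theorem \ref{thm:degen_upper}.
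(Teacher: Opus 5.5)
Your proposal is correct and follows the paper's route: the paper states that this corollary follows immediately from Theorem \ref{thm:degen_upper}, tacitly using the standard fact that a graph of treewidth $k$ is $k$-degenerate. You additionally prove that fact, correctly, via a private vertex in a leaf bag of a minimal tree decomposition (or via a chordal supergraph).
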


Figure \ref{fig:2_degen_tight} shows a graph $G$ such that $G$ has degeneracy 2, but $\dirloc(G) = 3$; this shows that Theorem \ref{thm:degen_upper} is tight when $k=2$.  
However, Theorem \ref{thm:chordal} shows that Theorem \ref{thm:degen_upper} is \textit{not} tight when $k=1$, since every 1-degenerate graph is a tree and thus chordal.  (It is an open question as to whether or not Theorem \ref{thm:degen_upper} is tight when $k \ge 3$.) 


\begin{figure}
\centering
\begin{tikzpicture}[inner sep=0mm, thick,
 smallvertex/.style={draw=black, circle, minimum size=0.011cm},
 vertex/.style={draw=black, fill=black, circle, minimum size=0.2cm},
 xscale=1,yscale=2]

\node (v) at (1,0) [vertex] {};
\node at (v) [above = 0.25cm] {$v$};
\node (w) at (3,0) [vertex] {};
\node at (w) [above = 0.25cm] {$w$};
\node (a) at (0,-1) [vertex] {};
\node at (a) [left = 0.25cm] {$a$};
\node (b) at (2,-1) [vertex] {};
\node at (b) [left = 0.25cm] {$b$};
\node (c) at (4,-1) [vertex] {};
\node at (c) [right = 0.25cm] {$c$};
\node (x) at (0,-2) [vertex] {};
\node at (x) [below = 0.25cm] {$x$};
\node (y) at (2,-2) [vertex] {}; 
\node at (y) [below = 0.25cm] {$y$};
\node (z) at (4,-2) [vertex] {};
\node at (z) [below = 0.25cm] {$z$};

\draw (v) -- (a) -- (w);
\draw (v) -- (b) -- (w);
\draw (v) -- (c) -- (w);
\draw (x) -- (a) -- (y);
\draw (x) -- (b) -- (z);
\draw (y) -- (c) -- (z);
\end{tikzpicture}
\caption{A 2-degenerate graph $G$ with $\dirloc(G) = 3$.}
\label{fig:2_degen_tight}
\end{figure}

Theorem \ref{thm:degen_upper} implies that for all graphs $G$, we have $\dirloc(G) \le \Delta(G)+1$.  For graphs with high maximum degree, a simpler argument yields a stronger upper bound.

\begin{proposition}\label{prop:max_degree_dense}
For every $n$-vertex graph $G$, we have $\dirloc(G) \le n-\Delta(G)$.
\end{proposition}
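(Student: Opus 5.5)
Let $v$ be a vertex of maximum degree $\Delta = \Delta(G)$, and let $U = V(G) \setminus N(v)$, so $\size{U} = n - \Delta$ and $v \in U$. The plan is for the $n-\Delta$ cops to probe every vertex of $U$ simultaneously in the very first round, and to show that this single round already determines the robber's location, regardless of how the robber responds in the partial-feedback model.

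We split into cases according to where the robber is.
\begin{itemize}
\item[$\bullet$] If the robber occupies a vertex of $U$, then that vertex has been probed. Its probe returns the vertex itself, and no other probe does, so the cops know his location immediately.
\item[$\bullet$] Otherwise the robber occupies some $u \in N(v)$. The probe at $v$ is then at distance $1$ from the robber. Every shortest $v,u$-path is just the edge $vu$, so the only legal response is $u$ itself. Thus the probe at $v$ returns a neighbor $u$ of $v$; moreover, since $v$ is not the robber's location, the response does not contain $v$.
\end{itemize}

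It remains to check that the cops can tell these cases apart; this is the only delicate point. We must rule out the possibility that a probe at $v$ returning some $u \in N(v)$ is also consistent with the robber sitting at some other vertex $w \in U \setminus \{v\}$, namely one at distance $\ge 2$ from $v$ via $u$. This is handled by the other probes. If the robber were at such a $w$, then the probe at $w$ would have returned $w$ itself. Conversely, if no probe returned its own vertex, the robber is not in $U$, and the response $u$ at $v$ pins him to $u$.

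So the decision rule is: if some probe returns its own vertex, the robber is there; otherwise the robber is at the vertex returned by the probe at $v$. Hence the cops win in one round, giving $\dirloc(G) \le n - \Delta(G)$. No genuine obstacle is expected beyond stating this disambiguation carefully.
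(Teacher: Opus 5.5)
Your proposal is correct and is essentially the paper's own proof. The paper also probes $v$ together with every vertex outside $N[v]$, which is exactly your set $U = V(G)\setminus N(v)$, and it uses the same one-round disambiguation: either some probe hits the robber, or he lies in $N(v)$ and the probe at $v$ returns his vertex.
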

\begin{proof}
We give a strategy for $n-\Delta(G)$ cops to locate a robber on $G$.  Let $v$ be a vertex of maximum degree in $G$.  On the first turn of the game, one cop probes $v$, while the other $n-\Delta(G)-1$ cops probe all vertices not belonging to $N[v]$.  If any of the cops probes the robber's vertex, then the cops win; otherwise, the robber must occupy some vertex in $N(v)$, so the probe at $v$ uniquely determines the robber's location.
\end{proof}

\begin{corollary}\label{cor:n_over_two}
For every $n$-vertex graph $G$, we have $\dirloc(G) \le \floor{n/2}$.
\end{corollary}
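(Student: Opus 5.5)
The plan is to combine Proposition \ref{prop:max_degree_dense} with Theorem \ref{thm:degen_upper}, splitting into cases according to the size of $\Delta(G)$. If $\Delta(G) \ge \lceil n/2 \rceil$, then Proposition \ref{prop:max_degree_dense} gives
\[
\dirloc(G) \le n - \Delta(G) \le n - \lceil n/2 \rceil = \floor{n/2},
\]
and we are done. So the work is in the case $\Delta(G) \le \lceil n/2 \rceil - 1$.

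In that case every subgraph of $G$ has minimum degree at most $\Delta(G)$, so $G$ is $\Delta(G)$-degenerate. Theorem \ref{thm:degen_upper} then yields
\[
\dirloc(G) \le \Delta(G) + 1 \le \lceil n/2 \rceil.
\]
When $n$ is even, $\lceil n/2 \rceil = \floor{n/2}$ and this finishes the proof.

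The main obstacle is $n$ odd with $\Delta(G) = (n-1)/2$ exactly. Here Proposition \ref{prop:max_degree_dense} gives $(n+1)/2$, and so does Theorem \ref{thm:degen_upper}, which is one too many. I would try two refinements.

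\begin{itemize}
\item First, use the degeneracy $k$ of $G$ rather than $\Delta(G)$. If $k \le (n-3)/2$, Theorem \ref{thm:degen_upper} already gives $\floor{n/2}$. So assume $k = (n-1)/2 = \Delta(G)$. Then some subgraph $H$ has $\delta(H) = (n-1)/2$, forcing $H$ to have at least $(n+1)/2$ vertices, each of degree exactly $\Delta$ in $G$. So $G$ is nearly regular of degree $(n-1)/2$.
\item Second, handle this near-regular situation by a direct modification of the strategy in Proposition \ref{prop:max_degree_dense}. Probe a maximum-degree vertex $v$ together with all but one vertex $u$ outside $N[v]$; there are $(n-1)/2$ vertices outside $N[v]$, so this uses $(n-1)/2 = \floor{n/2}$ cops. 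If the robber is in $N(v)$ or on a probed vertex, the cops win. Otherwise the robber is at $u$, and the probe at $v$ points to some neighbor of $v$ on a shortest path to $u$. Ambiguity arises only between $u$ and that neighbor $w$. To resolve it, choose the probed set so that some probe outside $N[v]$ is adjacent to exactly one of $u$ and $w$, or simply iterate: the robber territory after recontamination is contained in $N[\{u,w\}]$, which is small, and the next round can probe appropriately.
\end{itemize}

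I expect the real difficulty to be making this last case airtight. If it resists, I would instead look for a general argument that $\Delta(G)=(n-1)/2$ with $n$ odd lets $\floor{n/2}$ cops finish after one extra round from a two-vertex territory.
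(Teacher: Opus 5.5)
\textbf{There is a genuine gap: the odd case $n=2k+1$, $\Delta(G)=k$ is not actually proved.}

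Your case split up to that point is correct and matches the paper's. You use Proposition~\ref{prop:max_degree_dense} when $\Delta(G)\ge\lceil n/2\rceil$ and Theorem~\ref{thm:degen_upper} otherwise, leaving only $n=2k+1$ with $\Delta(G)=k$. The detour through the degeneracy of $G$ is harmless but never used. The remaining case is the only part of the corollary not already contained in the two cited results, and your proposal leaves it open.

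\emph{Your first suggested fix cannot work as stated.} The probe set is committed before the response $w$ at $v$ is revealed, so your only freedom is the choice of $u$. Moreover, a probed vertex adjacent to exactly one of $u$ and $w$ need not separate them when $u\sim w$. For example, take $C_5$ with vertices $v_0,\dots,v_4$ in order, and probe $v_0$ and $v_3$ (so $u=v_2$). The responses are $v_1$ and $v_2$ whether the robber is at $v_1$ or at $v_2$, even though $v_3$ is adjacent to $v_2$ and not to $v_1$. So within your probing scheme a single round does not suffice.

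\emph{Your second suggestion (iterate) is the right idea, but it is unsupported.} You do not say which probes to make, and $N[\{u,w\}]$ is not small in general. Also, with $u$ an arbitrary unprobed vertex outside $N[v]$, $u$ may lie at distance $3$ or more from $v$. Then $u\not\sim w$ and there is no evident finishing move. Separately, your sentence that the cops win if the robber is in $N(v)$ is false for exactly this ambiguity reason, as your next sentence concedes.

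\textbf{Two ingredients close the gap; this is how the paper finishes the proof.}

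First, choose $u$ at distance exactly $2$ from $v$. Such a vertex exists for $n\ge 3$ because $G$ is connected and $\size{N[v]}=k+1<n$. Every shortest $v,u$-path then has the form $v,z,u$, so $u$ is consistent with the probe at $v$ only if $w\sim u$. If $w\not\sim u$, the robber must be at $w$ and the cops win.

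Second, if $w\sim u$, then after recontamination the territory lies in $N[u]\cup N[w]=N(u)\cup N(w)$. In the next round, probe all of $N(u)$; this needs at most $\Delta(G)=k$ cops and includes $w$. If the robber is not on a probed vertex, he lies in $N(w)$, and the probe at $w$ must return his exact location.
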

\begin{proof}
By integrality of $\dirloc(G)$, it suffices to show that $\dirloc(G) \le n/2$.  If $\Delta(G) \ge n/2$, then by Proposition \ref{prop:max_degree_dense} we have $\dirloc(G) \le n-\Delta(G) \le n-n/2 = n/2$.  If instead $\Delta(G) \le n/2-1$, then by Theorem \ref{thm:degen_upper} we have $\dirloc(G) \le \Delta(G)+1 \le n/2$.  

This covers all cases except when $n = 2k+1$ for some $k$ and $\Delta(G) = k$.  For this case, we use a modification of the cop strategy in Proposition \ref{prop:max_degree_dense}.  Let $v$ be a vertex of maximum degree.  On their first turn, the cops probe every vertex \textit{except} for those in $N(v)$ and an arbitrary vertex $x$ at distance 2 from $v$.  If the cops have probed the robber's vertex, then they win.  Otherwise, suppose that the probe at $v$ returns $w$.  If $w$ is not adjacent to $x$, then the robber must occupy $w$ and again the cops win.  Suppose instead that $w$ is adjacent to $x$; the cops now know that the robber must occupy either $w$ or $x$.  After the recontamination phase, the robber could occupy any vertex in $N[w] \cup N[x]$.  Note that since $w$ and $x$ are adjacent, we have $N[w] \cup N[x] = N(w) \cup N(x)$.  On the second cop turn, the cops probe every vertex in $N(x)$, which is possible since $\size{N(x)} \le \Delta(G) = k$.  If the cops have probed the robber's vertex, then they win; otherwise, the robber must occupy some vertex in $N(w)$.  However, since $w \in N(x)$, the cops have probed $w$, so the probe at $w$ must return the robber's location.
\end{proof}

Perhaps surprisingly, Corollary \ref{cor:n_over_two} is tight for all values of $n$.  When $n = 2k$ for some $k$, by Proposition \ref{prop:simple}(iv) we have $\dirloc(K_{k,k}) = k = n/2$; similarly, when $n=2k+1$ we have $\dirloc(K_{k,k+1}) = k = \floor{n/2}$.  

We next turn our attention to the behavior of the game on Cartesian products of graphs.  Recall that the \textit{Cartesian product} of graphs $G$ and $H$, denoted $G \cart H$, is the graph with vertex set $V(G) \times V(H)$, with edges joining $(u,v)$ with $(u',v')$ whenever $uu' \in E(G)$ and $v=v'$, or $vv' \in E(H)$ and $u=u'$.  The graphs $G$ and $H$ are referred to as \textit{factors} of $G \times H$.  Given a vertex $(u,v) \in V(G \cart H)$, we refer to $u$ and $v$ as the \textit{$G$-coordinate} and \textit{$H$-coordinate} of the vertex, respectively.  For a set $S$ of vertices in $G \cart H$, the \textit{projection of $S$ onto $G$} is the set $\{u \in V(G) \, \vert \, (u,v) \in S \text{ for some } v \in V(H)\}$; likewise, the \textit{projection of $S$ onto $H$} is the set $\{v \in V(H) \, \vert \, (u,v) \in S \text{ for some } u \in V(G)\}$.


In the full-feedback model, the directional localization number of $G \cart H$ is easily determined from the directional localization numbers of its factors.

\begin{theorem}\label{thm:cartesian_complete}
For any graphs $G$ and $H$, we have $\dirloccomp(G \cart H) = \max\{\dirloccomp(G), \dirloccomp(H)\}$.
\end{theorem}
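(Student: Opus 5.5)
The plan is to first establish a decomposition of full-feedback responses in $G \cart H$, and then prove the two inequalities separately, one by simulating the product game on a factor and one by running the factor strategies in parallel.

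\textbf{Decomposition.} Since $\dist_{G\cart H}((u,v),(a,b)) = \dist_G(u,a)+\dist_H(v,b)$, a neighbor of $(u,v)$ lies on a shortest path to $(a,b)$ exactly in one of two cases:
\begin{itemize}
\item it is $(u',v)$ with $u'$ on a shortest $u,a$-path in $G$;
\item it is $(u,v')$ with $v'$ on a shortest $v,b$-path in $H$.
\end{itemize}
Hence the full-feedback response to a probe at $(u,v)$ is the union of a ``$G$-part'' and an ``$H$-part''. The $G$-part is empty exactly when $u=a$, and otherwise it is precisely the full-feedback response to a probe at $u$ in $G$ with robber at $a$. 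The $H$-part behaves symmetrically. Conversely, the product response is computable from the two factor responses. This lemma is the key step and the main technical point. I will state it carefully, including the boundary cases $u=a$ and/or $v=b$, where the factor response is $\{a\}$ (resp.\ $\{b\}$) but the product part is empty.

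\textbf{Lower bound.} I show $\dirloccomp(G\cart H)\ge \dirloccomp(G)$, and symmetrically for $H$. Suppose $k$ cops have a winning strategy on $G\cart H$. Fix $b\in V(H)$, and let $k$ cops on $G$ simulate that strategy against an imagined product robber at $(a,b)$, where $a$ is the real robber's position in $G$ and $b$ never changes. Whenever the product strategy probes $(u,v)$, the $G$-cop probes $u$. The $G$-response, together with the known $b$ and $v$, determines the product response by the decomposition lemma. Every move of the $G$-robber is a legal product move. So whenever the product cops pin down $(a,b)$, the $G$-cops know $a$, within the same bounded number of rounds.

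\textbf{Upper bound.} Let $k=\max\{\dirloccomp(G),\dirloccomp(H)\}$, and fix winning strategies for $k$ cops on $G$ (padding with redundant probes) and on $H$. In each round, cop $i$ probes $(g_i,h_i)$, where $g_i$ and $h_i$ are the $i$-th probes prescribed by the two factor strategies. The projections of the robber's position onto $G$ and $H$ each move like a legal factor robber, since a product move changes at most one coordinate by an edge. By the lemma, each product probe supplies the full factor responses to $g_i$ and $h_i$, so both factor games proceed faithfully.

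The one subtlety is that one factor strategy may finish before the other, after which that coordinate must stay known. Suppose the $G$-coordinate is known to be $a_t$ after round $t$. Afterward every cop uses $G$-coordinate equal to the current known value $a_t$. The robber's new $G$-coordinate lies in $N[a_t]$, and the full $G$-response to a probe at $a_t$ (empty $G$-part, or the unique neighbor returned) identifies it exactly. The $H$-strategy is unaffected, and symmetrically if $H$ finishes first. Once both coordinates are determined, the robber is located, within the sum of the two bounded round counts.
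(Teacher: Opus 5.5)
Your proposal is correct and takes essentially the same route as the paper. For the upper bound, you run the two factor strategies in parallel through the probes $(g_i,h_i)$, read off the full factor responses from the $G$- and $H$-parts of each product response, and keep re-probing the already-determined coordinate once one factor game is won, exactly as in the paper's argument that $R_r \subseteq R_{G,r} \times R_{H,r}$. Your lower bound confines the robber to a single layer $V(G)\times\{b\}$, which is the paper's ``ignore the $H$-coordinates'' robber argument recast as a transfer of the product cops' strategy to $G$ (a slightly more careful formulation, since in the full-feedback model the robber controls only his moves).
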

\begin{proof}
Assume without loss of generality that $\dirloccomp(G) \ge \dirloccomp(H)$ and let $k = \dirloccomp(G)$; we will argue that $\dirloccomp(G \cart H) \ge k$ and that $\dirloccomp(G \cart H) \le k$.

The lower bound is clear: if the robber has a strategy to evade $k$ cops on $G$, then on $G \cart H$, he can simply ignore $H$-coordinates and employ his winning strategy on $G$ to prevent $k$ cops from determining the $G$-coordinate of his position.

For the upper bound, we give a strategy for $k$ cops to locate a robber on $G \cart H$.  Essentially, the cops will follow a winning strategy on $G$ and a winning strategy on $H$ simultaneously.  Formally, the cops play as follows.  The cops imagine games on $G$ and $H$, and they use these games to guide their play on $G \cart H$.  In each round, the cops choose probes $u_1, \dots, u_k$ in $G$ and $v_1, \dots, v_k$ in $H$ according to winning strategies for those games; in $G \cart H$, they probe $(u_1,v_1), \dots, (u_k,v_k)$.  For each $i$, the response to cop $i$'s probe in $G \cart H$ will necessarily be of the form
\[\{(u_i,x) \, \vert \, x \in S^{(i)}_H\} \cup \{(w,v_i) \, \vert \, w \in S^{(i)}_G\}\]
for some set $S^{(i)}_G$ of vertices of $G$ and some set $S^{(i)}_H$ of vertices of $H$.  In the imagined game on $G$, the cops imagine that the robber has responded with $S^{(i)}_G$, while in the imagined game on $H$, the cops imagine that he has responded with $S^{(i)}_H$.  They then update the imagined games and repeat the process.  

Once the cops locate the robber in one of the imagined games, they play slightly differently: instead of ending the imagined game, they let the game continue, and they keep locating the robber in each successive round.  (Note that if the cops determine that the robber is located on vertex $v$, then after the subsequent recontamination phase, the robber must be in $N[v]$; thus, by probing $v$ in the next round, the cops can locate the robber once again.  Hence, once they have located the robber once, they can continue to locate him indefinitely.)  The cops repeat their strategy until either they have located the robber on $G \cart H$ or they have located him in both imagined games simultaneously.


We claim that at all points during the game, if the robber territories in the imagined games on $G$ and $H$ are $R_G$ and $R_H$ respectively, then the robber territory in the actual game on $G \cart H$ must be a subset of $R_G \times R_H$.  As usual, for $r \ge 1$, let $R_{G,r}$ and $R'_{G,r}$ (resp. $R_{H,r}$ and $R'_{H,r}$) denote the robber territories in $G$ (resp. $H$) just before and just after the probing phase of round $r$.  Likewise, let $R_r$ and $R'_r$ denote the robber territories in $G \cart H$ just before and just after the probing phase of round $r$.  It is clear that $R_1 \subseteq R_{G,1} \times R_{H,1}$. Fix $r \ge 1$, and assume that $R_r \subseteq R_{G,r} \times R_{H,r}$; we will show that $R'_{r} \subseteq R'_{G,r} \times R'_{H,r}$ and that $R_{r+1} \subseteq R_{G,r+1} \times R_{H,r+1}$.  

For all $i \in \{1, \dots, k\}$, let $(u_i,v_i)$ be the vertex probed by cop $i$ in round $r$, let $S^{(i)}$ be the robber's response, let $S^{(i)}_G = \{u \, \vert \, (u,v_i) \in S^{(i)}\}$, and let $S^{(i)}_H = \{v \, \vert \, (u_i,v) \in S^{(i)}\}$.  For any $(w,x)$ in $R'_{r}$, we must have $(w,x) \in R_{r}$ and, moreover, for all $(u,v)$ in $S^{(i)}$, some shortest path from $(u_i,v_i)$ to $(w,x)$ must pass through $(u,v)$.  Note that every $(u,v)$ in $S^{(i)}$ either has the form $(u,v_i)$ for some $u \in S_G^{(i)}$ or $(u_i,v)$ for some $v \in S_H^{(i)}$.  A shortest path from $(u_i,v_i)$ to $(w,x)$ passes through $(u,v_i)$ if and only if some shortest $u_i,w$-path in $G$ passes through $u$; likewise, a shortest path from $(u_i,v_i)$ to $(w,x)$ passes through $(u_i,v)$ if and only if some shortest $v_i,x$-path in $H$ passes through $v$.  Consequently, $(w,x) \in R'_{r}$ implies the following:
\begin{itemize}
\item $(w,x) \in R_{r} \subseteq R_{G,r} \times R_{H,r}$, hence $w \in R_{G,r}$ and $x \in R_{H,r}$;
\item For all $u \in S_G^{(i)}$, some shortest $u_i,w$-path in $G$ passes through $u$; and
\item For all $v \in S_H^{(i)}$, some shortest $v_i,x$-path in $G$ passes through $v$.
\end{itemize}
It follows that $w \in R'_{G,r}$ and $x \in R'_{H,r}$.  Thus, 
\[R'_{r} \subseteq R'_{G,r} \times R'_{H,r}\]
and, moreover,
\[R_{r+1} = N[R'_{r}] \subseteq N[R'_{G,r} \times R'_{H,r}] \subseteq N[R'_{G,r}] \times N[R'_{H,r}] = R_{G,r+1} \times R_{H,r+1}\]
as claimed.


Because the cops follow winning strategies in the imagined games on $G$ and $H$, eventually either the game on $G \cart H$ ends, or the cops locate the robber on both $G$ and $H$.  Suppose the latter, and suppose that this happens in the probing phase of round $r$ for some $r$.  Because the cops have determined the robber's position in both imagined games, we have $R'_{G,r} = \{u\}$ and $R'_{H,r} = \{v\}$ for some $u$ and $v$; hence, in the game on $G \cart H$, we have $R'_{r} \subseteq \{u\} \times \{v\} = \{(u,v)\}$, so the robber must be located on vertex $(u,v)$.  Thus the cops have in fact won on $G \cart H$ as well.
\end{proof}

For the partial-feedback model, things are less clear-cut.  Since a graph's degeneracy is bounded above by its maximum degree, Theorem \ref{thm:degen_upper} implies that always $\dirloc(G \cart H) \le \Delta(G \cart H) + 1 = \Delta(G) + \Delta(H) + 1$.  We next establish a lower bound on $\dirloc(G \cart H)$ that, in some circumstances, nearly matches this upper bound.

\begin{theorem}\label{thm:cartesian_lower}
For any nonempty connected graphs $G$ and $H$ we have $\dirloc(G \cart H) \ge \max\{\delta(G) + \dirloc(H), \delta(H) + \dirloc(G)\}$.
\end{theorem}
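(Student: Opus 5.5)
By symmetry it suffices to show $\dirloc(G \cart H) \ge \delta(G) + \dirloc(H)$. Let $k = \delta(G) + \dirloc(H) - 1$. I will give the robber a strategy against $k$ cops on $G \cart H$ in the partial-feedback model. The idea is to combine an imagined game on $H$ against $\dirloc(H)-1$ cops, which the robber wins, with a pigeonhole argument on closed neighborhoods in $G$ that uses $\delta(G)$. The robber keeps track of a ``current'' $G$-vertex $w$. He tries to keep the robber territory in $G \cart H$ large enough to contain $\{w\} \times S$, where $S$ is the robber territory of the imagined game on $H$.

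The key observation is about responses. Suppose the robber's candidate positions all lie in the fiber $\{w'\} \times S$, and the cops probe $(u_i,v_i)$ with $u_i \ne w'$. Then the robber can answer with $(u,v_i)$, where $u$ is a neighbor of $u_i$ on a shortest $u_i,w'$-path in $G$. Distances in $G \cart H$ add across coordinates, so this answer is consistent with every vertex of $\{w'\} \times H$ and eliminates nothing within the fiber. A probe with $u_i = w'$ is answered $(w',y)$, where $y$ is the response the robber gives in the imagined $H$-game to a probe at $v_i$. This answer is consistent with $(w',x)$ exactly when $y$ lies on a shortest $v_i,x$-path in $H$. Hence the only probes that cost the robber anything are those in the fiber of $w'$, and these are exactly the probes of the imagined $H$-game.

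Each round, the robber selects $w' \in N_G[w]$ whose fiber $\{w'\} \times H$ receives at most $\dirloc(H)-1$ probes. Such a $w'$ exists by pigeonhole. The set $N_G[w]$ has at least $\delta(G)+1$ elements, and if every fiber over it received at least $\dirloc(H)$ probes, the cops would use at least $(\delta(G)+1)\dirloc(H) > \delta(G)+\dirloc(H)-1$ probes, since $\dirloc(H) \ge 1$. The robber then plays the probes in the fiber of $w'$ (padded arbitrarily to exactly $\dirloc(H)-1$ probes) as a round of the imagined $H$-game against a winning robber strategy. So after the probing phase the territory contains $\{w'\} \times R'_H$, and $R'_H$ has at least two vertices. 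This already shows the cops never reach a single contaminated vertex, provided the invariant is maintained.

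The main obstacle is maintaining the invariant across the change $w \to w'$. To have a choice among all of $N_G[w]$, the robber needs the territory at the start of the round to contain $N_G[w] \times R_H$, not just $\{w\} \times R_H$. Recontamination from $\{w'\} \times R'_H$ yields only
\[
\bigl(N_G[w'] \times R'_H\bigr) \cup \bigl(\{w'\} \times N_H[R'_H]\bigr).
\]
The neighboring fibers therefore contain only the pre-recontamination $H$-territory. My first attempt is to treat a switch of $G$-coordinate as an $H$-round in which the robber ``stays put'': he feeds the imagined $H$-game the fiber probes but forgoes recontamination in $H$ that round. The territory $R'_H$ may then shrink twice without regrowth, so I would strengthen the requirement on the imagined game. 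The robber on $H$ should win even against a variant in which he may be forced to skip moves. Alternatively, the invariant can be stated as ``the territory contains $N_G[w] \times T$ where $T$ is the current imagined territory (pre- or post-move as appropriate).'' If skipping moves breaks the $H$-strategy, the fallback is to choose $w'$ only among neighbors when forced, and otherwise keep $w' = w$. This is the step I would check most carefully.
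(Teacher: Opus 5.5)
\textbf{There is a genuine gap, and it is the step you flagged: the invariant is never shown to survive a change of $G$-coordinate.} Your setup is the same as the paper's: an imagined $H$-game against $\dirloc(H)-1$ cops, off-fiber probes answered by pointing toward the robber's fiber in $G$, and in-fiber probes answered by the $H$-strategy.

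Your rule for choosing $w'$ is the wrong one. If $w'\neq w$ and the fiber over $w'$ receives even one probe, all you know is that $\{w'\}\times R'_H$ is contaminated. The imagined robber would then have to answer new $H$-probes on a territory that was never regrown by $N_H[\cdot]$, and his winning strategy says nothing about that situation.

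Your proposed repair, an $H$-strategy that survives skipped moves, is not available in general. A robber forced to skip moves faces two rounds of probes per move. For example, on $H=C_n$ with $n\ge 5$, one cop probing twice per robber move is at least as strong as two cops, and $\dirloc(C_n)=2$. The alternative invariant you sketch is too vague to check.

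\textbf{The missing idea is a sharper count, combined with pausing the $H$-game rather than skipping a move.} With $k=\delta(G)+\dirloc(H)-1$ probes, exactly one of two things happens:
\begin{enumerate}
\item[(a)] some $y\in N_G[w]$ has a completely unprobed fiber; or
\item[(b)] every vertex of $N_G[w]$ occurs as a $G$-coordinate of some probe, and then at most $k-\deg(w)\le\dirloc(H)-1$ probes lie in the fiber of $w$.
\end{enumerate}
In case (b) the robber stays at $w$ and plays one round of the $H$-game there.

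In case (a) he points every probe toward $y$ in $G$ and does \emph{not} advance the $H$-game at all. No $H$-probes are answered, so $\{y\}\times R'_{H,s-1}$ survives intact. The real recontamination then regrows the fiber of $y$ to $\{y\}\times N_H[R'_{H,s-1}]=\{y\}\times R_{H,s}$, and it also spreads $R'_{H,s-1}$ to all of $N_G[y]$.

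This gives the invariant $R_r\supseteq(\{w\}\times R_{H,s})\cup(N_G[w]\times R'_{H,s-1})$, with $s$ advancing only in rounds where the robber stays put. The imagined robber never answers probes on un-regrown territory, so no strengthening of the $H$-strategy is needed.

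Your fallback (``switch only when forced'') points in this direction, but two facts are missing from it. First, a forced switch (the fiber of $w$ gets at least $\dirloc(H)$ probes, so fewer than $\deg(w)$ probes remain elsewhere) can always go to an unprobed fiber. Second, such a switch pauses the $H$-game rather than skipping a robber move in it. Without these, the argument is incomplete.
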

\begin{proof}
Suppose without loss of generality that $\delta(G) + \dirloc(H) \ge \delta(H) + \dirloc(G)$, and let $k = \delta(G) + \dirloc(H)$; we give a strategy for the robber to evade $k-1$ cops.  Very loosely, the robber's strategy will be to respond to every probe with information about the $G$-coordinate of his current position whenever possible.  To simplify the argument, we will actually consider a modified game on $G \cart H$, wherein the robber informs the cops that he will choose a starting location in $N_G[x] \times V(H)$ for some particular (but arbitrarily-chosen) $x \in V(G)$.  If the robber can evade $k-1$ cops in this modified game, then clearly he can do so in the original game as well.  
(Although this tactic may seem detrimental to the robber, it won't actually harm him: since the robber plans to give the cops information about the $G$-coordinate of his position at every opportunity, the cops would be able to determine the $G$-coordinate of the his position without much difficulty.)

The robber imagines a game on $H$ played against $\dirloc(H)-1$ cops, and he uses a winning strategy in that game to guide his play on $G \cart H$.  As usual, for all $r \ge 1$, let $R_{H,r}$ and $R'_{H,r}$ denote the robber territory in the imagined game on $H$ just before and just after the probing phase of round $r$.  Likewise, let $R_r$ and $R'_r$ denote the robber territory in the actual game on $G \cart H$ just before and just after the probing phase of round $r$.  The robber will play so as to ensure that for all $r$, we have $R_r \supseteq \left(\{x_r\} \times R_{H,s}\right) \cup \left(N_G[x_r] \times R'_{H,s-1}\right )$ and $R'_r \supseteq \{x'_r\} \times R'_{H,s}$ for some positive integer $s$ with $s \le r$ and some $x_r,x'_r \in V(G)$.  

By assumption we have $R_1 \supseteq \left(\{x\} \times R_{H,1}\right) \cup \left (N_G[x] \times R'_{H,0}\right)$.  Fix $r \ge 1$ and assume that $R_r \supseteq \left(\{x_r\} \times R_{H,s}\right) \cup \left(N_G[x_r] \times R'_{H,s-1}\right )$ for some $s \le r$ and some $x_r \in V(G)$; we will argue that $R'_r \supseteq \{x_{r+1}\} \times R'_{H,s-1}$ and $R_{r+1} \supseteq \left(\{x_{r+1}\} \times R_{H,s'}\right) \cup \left(N_G[x_{r+1}] \times R'_{H,s'-1}\right )$ for some $s' \le r+1$ and some $x_{r+1} \in V(G)$.  We may assume that in fact $R_r = \left(\{x_r\} \times R_{H,s}\right) \cup \left(N_G[x_r] \times R'_{H,s-1}\right )$, since reducing the robber territory cannot benefit the robber.   Suppose that in round $r$ of the game on $G \cart H$, the cops probe vertices $(u_1, v_1), \dots, (u_{k-1},v_{k-1})$.  We consider two cases.
\begin{itemize}
\item \textbf{Case 1:} there is some $y \in N_G[x_r]$ such that $y \not \in \{u_1, \dots, u_{k-1}\}$.  In the game on $G \cart H$, for all $i \in \{1, \dots, k-1\}$, the robber responds to the probe $(u_i,v_i)$ with vertex $(u,v_i)$, where $u$ is a neighbor of $u_i$ in the direction of $y$.  Note that all vertices in $\{y\} \times R'_{H,s-1}$ are consistent with the robber's response.  Additionally, all such vertices are contaminated, since by assumption $R_r \supseteq N_G[x_r] \times R'_{H,s-1}$.  Now taking $x_{r+1} = y$ and $s' = s$, we have $R'_r \supseteq \{x_{r+1}\} \times R'_{H,s'-1}$ and, moreover,
\begin{align*}
R_{r+1} &= N[R'_r]\\
    &\supseteq N[\{x_{r+1}\} \times R'_{H,s'-1}]\\
    &\supseteq \left(N_G[x_{r+1}] \times R'_{H,s-1}\right) \cup \left(\{x_{r+1}\} \times N[R'_{H,s'-1}]\right)\\
    &= \left(N_G[x_{r+1}] \times R'_{H,s'-1}\right) \cup \left(\{x_{r+1}\} \times R_{H,s'}\right),
\end{align*}
as desired.\\

\item \textbf{Case 2:} every vertex in $N_G[x_r]$ appears in $\{u_1, \dots, u_{k-1}\}$.  Let $c$ denote the number of different $i \in \{1, \dots, k-1\}$ such that $u_i = x_r$.  Note that since every neighbor of $x_r$ appears in $\{u_1, \dots, u_{k-1}\}$, we have $c \le k-1-\deg(x_r) \le k-1-\delta(G) = \dirloc(H)-1$.  

Suppose without loss of generality that $u_1 = u_2 = \dots = u_c = x_r$ and that $u_i \not = x_r$ for $i > c$.  For all $i \in \{c+1, \dots, k-1\}$, in the game on $G \cart H$, the robber responds to the probe at $(u_i,v_i)$ with vertex $(u,v_i)$, where $u$ is any neighbor of $u_i$ in the direction of $x_r$.  Note that these responses are all consistent with all vertices in $\left(\{x_r\} \times R_{H,s}\right) \cup \left(\{x_r\} \times R'_{H,s-1}\right)$.  Next, in the game on $H$, the robber imagines that the cops probe vertices $v_1, \dots, v_c$.  If in fact $c < \dirloc(H)-1$, then the robber additionally imagines $\dirloc(H)-1-c$ additional probes, at arbitrary vertices.  Since the robber is playing the imagined game on $H$ against fewer than $\dirloc(H)$ cops, he has a winning strategy; suppose that he responds to these probes according to such a strategy, and let his responses to $v_1, \dots, v_c$ be $y_1, \dots, y_c$.  In the actual game, for all $i \in \{1, \dots, c\}$, the robber responds to the probe at $(u_i,v_i)$ with vertex $(u_i,y_i)$.

We aim to determine the set of vertices in $R'_r$ of the form $(x_r, y)$.  As noted above, every vertex in $\{x_r\} \times R_{H,s}$ is consistent with the robber's responses to the probes at $(u_{c+1},v_{c+1}), \dots, (u_{k-1},v_{k-1})$.  A vertex $(x_r, y)$ is consistent with the robber's responses to probes at $(x_r,v_1), \dots, (x_r,v_c)$ provided that for all $i \in \{1, \dots, c\}$, some shortest $v_i,y$-path in $H$ passes through $y_i$.  By definition of $R'_{H,s}$, this is true for all $y \in R'_{H,s}$.  Hence, taking $x_{r+1} = x_r$, we have
\[R'_r \supseteq \{x_{r+1}\} \times R'_{H,s}\]
and
\begin{align*}
R_{r+1} &= N[R'_r]\\ 
    &\supseteq N[\{x_{r+1}\} \times R'_{H,s}]\\
    &\supseteq \left (\{x_{r+1}\} \times N_H[R'_{H,s}]\right ) \cup \left(N_G[x_{r+1}] \times R'_{H,s}\right)\\
    &= \left (\{x_{r+1}\} \times R_{H,s+1}\right ) \cup \left(N_G[x_{r+1}] \times R'_{H,s}\right),
\end{align*}
as desired.
\end{itemize} 

To show that this strategy allows the robber to evade the cops indefinitely, it suffices to argue that $\size{R'_r} \ge 2$ for all $r$.  In the imagined game on $H$, the robber uses a winning strategy against $\dirloc(H)-1$ cops; hence, $\size{R'_{H,s}} \ge 2$ for all $s$.  It follows that $\size{R'_r} \ge \size{\{x'_r\} \times R'_{H,s}} \ge 2$ as well; thus, the cops can never uniquely determine the robber's position, as claimed. 
\end{proof}

Together, Theorem \ref{thm:degen_upper} and Theorem \ref{thm:cartesian_lower} nearly determine $\dirloc(Q_n)$.

\begin{cor}\label{cor:hypercube}
For every positive integer $n$, we have $\dirloc(Q_n) \in \{n, n+1\}$.
\end{cor}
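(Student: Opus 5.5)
The upper bound should come directly from Theorem \ref{thm:degen_upper}. The hypercube $Q_n$ is $n$-regular, so every subgraph $H$ of $Q_n$ has $\delta(H) \le \Delta(Q_n) = n$. Hence $Q_n$ is $n$-degenerate, and Theorem \ref{thm:degen_upper} gives $\dirloc(Q_n) \le n+1$.

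For the lower bound I will induct on $n$ using Theorem \ref{thm:cartesian_lower}. The base case is $n=1$. Here $Q_1 = K_2 = P_2$, so Proposition \ref{prop:simple} gives $\dirloc(Q_1) = 1 \ge 1$. For the inductive step with $n \ge 2$, write $Q_n = K_2 \cart Q_{n-1}$. Both factors are nonempty and connected, so Theorem \ref{thm:cartesian_lower} applies. Using $\delta(K_2) = 1$ and the inductive hypothesis $\dirloc(Q_{n-1}) \ge n-1$, it gives
\[\dirloc(Q_n) \ge \delta(K_2) + \dirloc(Q_{n-1}) \ge 1 + (n-1) = n.\]

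Alternatively, one could write $Q_n = Q_{n-1} \cart K_2$ and use the other term $\delta(Q_{n-1}) + \dirloc(K_2) = (n-1) + 1$. This gives the lower bound $n$ without any induction, since $\dirloc(K_2) = 1$ by Proposition \ref{prop:simple}(i). I would present this version as the primary argument and keep the induction only as a remark.

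The only step needing care is checking the hypotheses of Theorem \ref{thm:cartesian_lower}. The factors must be nonempty (have at least one edge) and connected. This holds for $K_2$ and for $Q_{n-1}$ when $n \ge 2$. The case $n=1$ is handled directly by Proposition \ref{prop:simple}, which gives $\dirloc(Q_1) = 1 \in \{1,2\}$. Combining the two bounds yields $\dirloc(Q_n) \in \{n, n+1\}$.
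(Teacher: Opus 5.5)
Your proof is correct, and your upper bound is exactly the paper's. The lower bound takes a slightly different route.

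The paper inducts on $n$. It applies Theorem~\ref{thm:cartesian_lower} through the term $\delta(K_2) + \dirloc(Q_{n-1})$ (written there as $\Delta(K_2)$, which is the same) and combines it with the hypothesis $\dirloc(Q_{n-1}) \ge n-1$. Your primary argument uses the other term of the maximum, $\delta(Q_{n-1}) + \dirloc(K_2) = (n-1)+1$. This gives $n$ in one step from the trivial fact $\dirloc(K_2) = 1$, with no induction.

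Your route also relies only on the easy part of the proof of Theorem~\ref{thm:cartesian_lower}. The robber's imagined game on $K_2$ is played against zero cops. Moreover, $\size{N_{Q_{n-1}}[x_r]} = n$ exceeds the $n-1$ cops, so Case~2 of that proof can never occur. The robber simply answers every probe by pointing toward an unprobed first coordinate $y$, which keeps both vertices of $\{y\} \times V(K_2)$ contaminated and consistent.

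In exchange, the paper's inductive route yields the recursive inequality $\dirloc(Q_n) \ge \dirloc(Q_{n-1}) + 1$, i.e.\ $\dirloc(Q_n) - n$ is nondecreasing in $n$. So if one ever showed $\dirloc(Q_m) = m+1$ for a single $m$, the paper's step would propagate this to every $n \ge m$. Your one-step bound does not give that.
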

\begin{proof}
By Theorem \ref{thm:degen_upper}, we have $\dirloc(Q_n) \le \Delta(Q_n)+1 = n+1$.  For the lower bound, we use induction on $n$. Clearly, $\dirloc(Q_1) = \dirloc(K_2) = 1$.  Fix $n \ge 2$, and assume that $\dirloc(Q_{n-1}) \ge n-1$; now by Theorem \ref{thm:cartesian_lower}, 
\[\dirloc(Q_n) = \dirloc(Q_{n-1} \cart K_2) \ge \dirloc(Q_{n-1}) + \Delta(K_2) \ge (n-1)+1 = n,\]
as claimed.
\end{proof}

\section{Treewidth}\label{sec:dirloc_treewidth}

In this section, we explore the connection between the partial-feedback directional localization number of a graph and its treewidth.  Corollary \ref{cor:treewidth_degen} shows that for any graph $G$, the partial-feedback directional localization number of $G$ is at most $\tw(G)+1$.  In this section, we refine this bound by showing that under certain circumstances, we have $\dirloc(G) \le tw(G)$.

We begin with a formal definition of treewidth.
\begin{defn}\label{defn:treewidth}
A \textit{tree decomposition} of a graph $G$ is a pair $(T, \mathcal{B})$, where $T$ is a tree and $\mathcal{B} = \{B_a \, : \, a \in V(T)\}$ is a family of subsets of $V(G)$, with the following two properties:
\begin{itemize}
\item [(i)] For any $v \in V(G)$ and $B_a, B_b \in V(T)$, if $v$ is an element of both $B_a$ and $B_b$, then it is also an element of $B_c$ for every node $c$ on the unique $a,b$-path in $T$; and
\item [(ii)] For every $uv \in E(G)$, some $B_a$ contains both $u$ and $v$.
\end{itemize}

We refer to the sets $B_a$ as \textit{bags} of the decomposition.  The \textit{width} of a decomposition is one less than the maximum size of a bag; the \textit{treewidth} of $G$, denoted $\tw(G)$, is the minimum width over all tree decompositions of $G$.  
\end{defn}

In what follows, we will need our tree decompositions to have certain nice properties.

\begin{defn}[\cite{Bod96}]\label{defn:smooth}
We say that a tree decomposition $(T, \mathcal{B})$ of width $k$ is \textit{smooth} if:
\begin{itemize}
\item $\size{B_a} = k+1$ for all $a \in V(T)$, and
\item $\size{B_a \cap B_b} = k$ for all $ab \in E(T)$.
\end{itemize}
\end{defn}

It is well-known (see e.g. \cite{Bod96}) that if $G$ has a tree decomposition of width $k$, then it has a smooth tree decomposition of width $k$.  For our purposes, we will be interested in an even more specialized sort of tree decomposition, which we refer to as a \textit{restricted smooth tree decomposition}.

\begin{defn}\label{defn:restricted}
Given a tree decomposition $(T, \mathcal{B})$ of width $k$, we call $(T, \mathcal{B})$ a \textit{restricted smooth tree decomposition} of $G$ if:
\begin{itemize}
\item $\Delta(T) \le 3$;
\item $\size{B_a} \in \{k, k+1\}$ for all $a \in V(T)$; and
\item $\size{B_a \cap B_b} = k$ for all $ab \in E(T)$.
\end{itemize}
\end{defn}

Given any graph $G$, a smooth tree decomposition of $G$ satisfies the second and third properties of \ref{defn:restricted}.  Additionally, it is not difficult to find a decomposition of width $\tw(G)$ in which every node of the underlying tree has degree at most 3.  However, it is not always possible to find a decomposition that accomplishes both of these feats simultaneously; that is, not every graph $G$ admits a restricted smooth tree decomposition of width $\tw(G)$.  

We will also need the following well-known fact.

\begin{lemma}[\cite{Diestel}, Lemma 12.3.1]\label{lem:bag_cutset}
Let $(T, \mathcal{B})$ be a tree decomposition of a graph $G$, let $a$ and $b$ be adjacent nodes in $T$, and let $T_a$ (resp. $T_b$) be the component of $T-ab$ containing $a$ (resp. $b$).  Then $B_a \cap B_b$ separates $\cup_{c \in V(T_a)} B_c$ from $\cup_{c \in V(T_b)} B_c$ in $G$. 
\end{lemma}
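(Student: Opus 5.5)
This is the standard separator lemma for tree decompositions. I would prove it by contradiction, using only property (i) (connectivity of the nodes containing a given vertex) and property (ii) (every edge lies in some bag). Write $S = B_a \cap B_b$, $U_a = \bigcup_{c \in V(T_a)} B_c$ and $U_b = \bigcup_{c \in V(T_b)} B_c$.

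\textbf{Step 1: vertices outside $S$ lie on one side only.} First I would show that $U_a \cap U_b \subseteq S$. Take $v \in B_c \cap B_{c'}$ with $c \in V(T_a)$ and $c' \in V(T_b)$. The unique $c,c'$-path in $T$ uses the edge $ab$, so by property (i) we get $v \in B_a$ and $v \in B_b$, i.e. $v \in S$.

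\textbf{Step 2: edges avoiding $S$ stay on one side.} Next, take an edge $uv$ with $u \in U_a \setminus S$ and $v \in U_b \setminus S$, and suppose for contradiction that it exists.
\begin{itemize}
\item By property (ii), some bag $B_c$ contains both $u$ and $v$, and $c$ lies in $T_a$ or in $T_b$.
\item If $c \in V(T_a)$, then $v \in U_a \cap U_b \subseteq S$ by Step 1, a contradiction.
\item If $c \in V(T_b)$, the same argument gives $u \in S$, again a contradiction.
\end{itemize}
Thus no edge of $G - S$ joins $U_a \setminus S$ to $U_b \setminus S$.

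\textbf{Step 3: paths.} Finally, consider any path $P$ in $G$ from a vertex of $U_a$ to a vertex of $U_b$. Every vertex of $G$ lies in some bag (after the usual convention; vertices of positive degree do by property (ii)), so $V(G) = U_a \cup U_b$. Suppose $P$ avoided $S$. Its first vertex lies in $U_a \setminus S$ and its last in $U_b \setminus S$, and by Step 1 these two sets are disjoint. Hence some consecutive pair of vertices on $P$ crosses from $U_a \setminus S$ to $U_b \setminus S$, contradicting Step 2. So every such path meets $S$, as claimed.

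There is no real obstacle here. The only point requiring care is Step 1, which relies on the fact that the tree path between the two components must pass through the edge $ab$.
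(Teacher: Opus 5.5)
Your proof is correct and is essentially the standard argument from Diestel (Lemma 12.3.1), which the paper cites rather than proves. Property (i) forces $U_a \cap U_b \subseteq B_a \cap B_b$ because every tree path from $T_a$ to $T_b$ uses the edge $ab$, and property (ii) then rules out an edge between $U_a \setminus S$ and $U_b \setminus S$. Your caveat about whether every vertex lies in a bag is harmless: every vertex of a path with at least one edge lies in a bag by property (ii), and a one-vertex path from $U_a$ to $U_b$ already lies in $U_a \cap U_b \subseteq S$.
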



Very loosely, we can use a tree decomposition $(T,\mathcal{B})$ of $G$ to locate a robber in $G$ by probing all vertices in the intersection of bags $B_a$ and $B_b$ for some $ab \in E(T)$, determining which component of $T-ab$ contains the robber, and repeating.  Our next lemma shows that, when probing $B_a \cap B_b$, it is in fact possible for the cops to determine which component of $T-ab$ contains the robber.

\begin{lemma}\label{lem:oneside}
Let $G$ be a graph, and let $(T, \mathcal{B})$ be a 
tree decomposition.  Let $a$ and $b$ be adjacent nodes in $T$ and let $S = B_a \cap B_b$.  If the cops probe every vertex of $S$, then regardless of how the robber responds to these probes, all vertices consistent with the robber's response lie in bags corresponding to nodes in the same component of $T-ab$.
\end{lemma}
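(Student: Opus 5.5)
The plan is to argue by contradiction, using Lemma~\ref{lem:bag_cutset} to force shortest paths between the two sides through $S$, together with a minimality choice of the probe.

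Let $T_a$ and $T_b$ be the components of $T-ab$. Set $A = \left(\bigcup_{c \in V(T_a)} B_c\right) \setminus S$ and $B = \left(\bigcup_{c \in V(T_b)} B_c\right) \setminus S$. By Lemma~\ref{lem:bag_cutset}, every path in $G$ from $A$ to $B$ meets $S$.

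First I dispose of the easy cases.
\begin{itemize}
\item If the robber occupies a vertex of $S$, that probe returns the vertex itself. The only consistent vertex is then that vertex, which lies in $B_a$, so the claim is trivial.
\item Otherwise no probe returns its own vertex, so no vertex of $S$ is consistent. Every consistent vertex therefore lies in $A \cup B$.
\item If $S=\emptyset$, then since $G$ is connected, one of $A$, $B$ is empty and there is nothing to prove.
\end{itemize}

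So suppose, for contradiction, that $u \in A$ and $w \in B$ are both consistent with all the responses. For each $s \in S$, let $x_s$ be a vertex returned by the probe at $s$. Consistency of $u$ and $w$ gives
\[
d(x_s,u)=d(s,u)-1 \quad\text{and}\quad d(x_s,w)=d(s,w)-1 .
\]
Choose $s \in S$ minimizing the potential $\Phi(s)=d(s,u)+d(s,w)$. I will show that $x_s$ cannot lie in $S$, in $A$, or in $B$.
\begin{itemize}
\item If $x_s \in S$, then $\Phi(x_s)=\Phi(s)-2$, contradicting minimality.
\item If $x_s \in B$, take a shortest $x_s,u$-path. It runs from $B$ to $A$, so it meets some $s' \in S$. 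Then $d(s',u)=d(x_s,u)-d(x_s,s')$. Also
\[
d(s',w)\le d(s',x_s)+d(x_s,w)=d(x_s,s')+d(s,w)-1 .
\]
Adding these gives $\Phi(s')\le d(s,u)-1+d(s,w)-1=\Phi(s)-2$, again a contradiction.
\item The case $x_s \in A$ is symmetric, with the roles of $u$ and $w$ exchanged.
\end{itemize}
Since $x_s$ is a neighbor of $s$ and cannot lie in $S$, $A$, or $B$, no such pair $u,w$ exists. Hence all consistent vertices lie on one side, i.e.\ in bags of a single component of $T-ab$.

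The main obstacle is finding the right quantity to minimize. Minimizing only $d(s,u)$, or only $d(s,w)$, yields an iteration with no evident termination. The sum $\Phi$ works because, in the bad case, the triangle inequality through $x_s$ produces a strict drop of $2$.
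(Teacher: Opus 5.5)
Your proof is correct. It rests on the same mechanism as the paper's: if $u$ and $w$ are both consistent with the probe at $s$, the returned vertex satisfies $\Phi(x_s)=\Phi(s)-2$. The difference is how you choose $s$, and your choice costs a case analysis that the paper avoids.

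The paper takes $s$ to be a vertex of $S$ lying on a shortest $u,w$-path; one exists by Lemma~\ref{lem:bag_cutset}. For that $s$ we have $\Phi(s)=d(u,w)$, so $d(u,x_s)+d(x_s,w)=d(u,w)-2$. This contradicts the triangle inequality outright, wherever $x_s$ lies.

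Your minimizer of $\Phi$ over $S$ in fact has the same property, $\Phi(s)=d(u,w)$. This is because $\Phi\ge d(u,w)$ at every vertex and that value is attained on $S$. Your cases $x_s\in A$ and $x_s\in B$ are effectively re-deriving this fact locally. They also tacitly need $V(G)=A\cup S\cup B$, which is true because the edge $sx_s$ lies in some bag, but the paper's version never uses it.

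So your closing remark, that finding the right potential is the main obstacle, overstates the difficulty. Once $s$ is placed on a shortest $u,w$-path there is no iteration or termination issue, and the three-way split on $x_s$ can be dropped.

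In your favor, you handle the degenerate situations explicitly: a probe returning its own vertex, and $S=\emptyset$. The paper leaves these implicit.
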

\begin{proof}
Let $S = \{v_1, \dots, v_k\}$.  By Lemma \ref{lem:bag_cutset}, $S$ is a cut-set in $G$; let $H$ (resp. $H'$) be the subgraph of $G-S$ consisting of all vertices in bags corresponding to nodes on the same side as $a$ (resp. $b$) in $T-ab$.  Suppose the cops probe all vertices of $S$ and the robber responds.  
We aim to show that either all vertices consistent with the robber's response lie in $H$, or all such vertices lie in $H'$.  Suppose for the sake of contradiction that there exist vertices $r$ and $r'$, both consistent with the robber's response, such that $r \in V(H)\setminus V(H')$ and $r' \in V(H') \setminus V(H)$.  

Because $r$ and $r'$ lie in different components of $G-S$, every $r,r'$-path must include at least one vertex in $S$.  Let $Q$ denote a shortest $r,r'$-path, and suppose without loss of generality that $Q$ includes $v_1$.  Because $v_1$ lies on a shortest $r,r'$-path, we have $\dist(r, r') = \dist(r,v_1) + \dist(v_1, r')$.  Let $w_1$ denote the robber's response to the cops' probe at $v_1$.  Both $r$ and $r'$ are consistent with the probe at $v_1$, hence there must exist an $r,v_1$-path $Q_1$ that has length $\dist(r, v_1)$, and whose next-to-last vertex is $w_1$; likewise, there must be some $v_1,r'$-path $Q_1'$ that has length $\dist(v_1, r')$ and whose second vertex is $w_1$.  Now consider the $r,r'$-path that follows $Q_1$ from $r$ to $w_1$, then follows the portion of $Q_1'$ that runs from $w_1$ to $r'$.  This path uses all vertices of $Q_1$ except the last, and all vertices of $Q_1'$ except the first, so it has length 
$$(\dist(r, v_1) - 1) + (\dist(v_1, r')-1) = \dist(r,v_1)+\dist(v_1,r')-2 = \dist(r,r')-2,$$
which is clearly impossible.  The claim now follows.
\end{proof}

We are now ready to establish a bound on $\dirloc(G)$ in terms of the width of a restricted smooth tree decomposition of $G$.

\begin{theorem}\label{thm:treewidth_restricted}
If a graph $G$ has a restricted smooth tree decomposition of width $k$, then $\dirloc(G) \le k$.
\end{theorem}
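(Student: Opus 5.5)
The plan is to turn Lemma \ref{lem:oneside} into a search that walks down the tree $T$. Fix a restricted smooth tree decomposition $(T,\mathcal{B})$ of width $k$. Every edge separator $B_a \cap B_b$ with $ab \in E(T)$ has exactly $k$ vertices, so $k$ cops can probe all of it in one round. For an oriented edge $(a,b)$, let $T_b$ be the component of $T-ab$ containing $b$, and let $G_b$ be the union of the bags of $T_b$.

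The cops will maintain an oriented edge $(a,b)$ together with the invariant that, at the start of a round, the robber territory is contained in $V(G_b)$. The invariant allows the separator $S=B_a\cap B_b$ to be contaminated but nothing strictly on the $a$-side. The argument that this invariant survives a round is as follows.

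\begin{itemize}
\item Suppose the cops probe $S$. If the robber is on $S$ he is found. Otherwise, by Lemma \ref{lem:oneside} all consistent vertices lie on one side of $S$.
\item After recontamination, Lemma \ref{lem:bag_cutset} implies that the robber can only have stepped into $S$ itself, never beyond it.
\end{itemize}

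Initially the cops probe the separator of an arbitrary edge, and Lemma \ref{lem:oneside} tells them which side to orient toward.

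The progress step goes as follows. Given $(a,b)$, the node $b$ has at most two neighbors $c,d$ other than $a$, because $\Delta(T)\le 3$. The cops probe $S'=B_b\cap B_c$.
\begin{itemize}
\item If the responses put the robber on the $c$-side, they move to $(b,c)$. This invariant holds since everything on the $b$-side of $bc$ was just cleared, and re-entry is only into $S'$.
\item Otherwise the robber lies in $V(G_b)\setminus V(G_c)$. This set is contained in $B_b\cup V(G_d)$ together with the at most one vertex of $S$ re-entered. The cops then probe $B_b\cap B_d$ and, if the robber is on the $d$-side, move to $(b,d)$.
\end{itemize}
The potential for termination is $|V(T_b)|$. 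It strictly decreases at each forward move, so the cops need only show that they cannot be stalled forever at $b$. If $b$ is a leaf, the territory lies in $B_b$, which has at most $k+1$ vertices, and the cops should finish directly by probing $k$ of them: if the robber is on the unprobed vertex, all probes point toward it.

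The main obstacle is exactly this stalling at an internal node $b$. When both probes $S'$ and $S''=B_b\cap B_d$ report that the robber is ``not on that side,'' the consistent set is confined to the small set $B_b$ plus the vertex of $S$ he might have re-entered. However, the robber moves between the two probing rounds and could oscillate between the $c$- and $d$-sides through $B_b$.

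I would first handle this with counting. Since $|B_b|\le k+1$ and each edge separator at $b$ has size $k$, every such separator misses at most one vertex of $B_b$. So $B_b\setminus S'$ and $B_b\setminus S''$ are single vertices, say $u'$ and $u''$. If the robber is not on the $c$-side, he is in $B_b$ or the $d$-side, and all of $S'$ has been probed, so he is either found or at $u'$ or deeper in $G_d$.

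I would then exploit the direction information in the partial-feedback responses. If the robber is at $u'$, every probe in $S'$ adjacent to $u'$ must point at $u'$. I would check whether these responses, combined with the next probe of $S''$, pin him down. In the case $|B_b|=k$, all separators at $b$ coincide with $B_b$, so one probe of $B_b$ decides among the up to three sides at once and no stalling occurs.

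If this local argument fails, my fallback is to modify the walk so that the cops only ever probe separators in the direction away from the last probed one. I would then argue that the robber crossing back through $B_b$ requires him to occupy a probed vertex at the moment of probing.
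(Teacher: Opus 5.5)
Your outer framework matches the paper's proof: orient an edge, keep the robber territory inside $V(G_b)$, use $|V(T_b)|$ as a potential, and finish at leaves. The gap is exactly at the step you call the main obstacle. You never prove that the cops cannot be stalled at an internal node $b$ with two further neighbours $c,d$. The direction-counting idea (probes in $S'$ pointing at $u'$) and the fallback (probing only away from the last separator) are things you would try, not arguments. The responses from $S'$ are also not what closes the case; what closes it is that $u'$ itself gets probed in the next round. As written, termination is not established.

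The missing argument is short.
\begin{enumerate}
\item[(1)] After the probe of $S'=B_b\cap B_c$ reports the $b$-side, you correctly place the robber at $u'$ or in $V(G_d)\setminus B_b$. At that moment no vertex of $V(G_c)$ is contaminated: $S'$ was just probed, and $u'\notin B_c$ forces $u'\notin V(G_c)$ by the subtree property. By Lemma \ref{lem:bag_cutset}, every neighbour of a vertex of $V(G_c)\setminus B_b$ lies in $V(G_c)$. So $V(G_c)\setminus B_b$ stays clear through the robber's move, the oscillation you fear cannot happen, and at the start of the next round the robber is in $N[u']\cup V(G_d)$.
\item[(2)] If $S''=B_b\cap B_d$ equals $S'$, the first probe already settles the $d$-side too, by Lemma \ref{lem:oneside} applied to $bd$. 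That leaves only $u'$, so the robber is located; this case is broader than your $|B_b|=k$ case. If $S''\neq S'$, then both are $k$-subsets of $B_b$ and $B_b\setminus S'=\{u'\}$, which forces $u'\in S''$.
\item[(3)] Probing $S''$ then has two outcomes. Either the robber is on the $d$-side, which secures $(b,d)$. Or he is on the $b$-side, where he must lie in $N[u']$. Since $u'$ was itself probed, and a probe at a vertex adjacent to the robber must return the robber's own vertex, the cops win.
\end{enumerate}
So each internal node costs at most two rounds, which is precisely the paper's argument. Without (1)--(3), your proposal does not close.
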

\begin{proof}
Let $(T, \mathcal{B})$ be a restricted smooth tree decomposition of $G$ with width $k$. 
We give a strategy for $k$ cops to locate a robber on $G$.  Loosely, the cops' strategy works as follows.  Let $r$ be an arbitrary leaf in $T$.  The cops imagine $T$ as being rooted at $r$; they amass their forces in the vertices of $B_r$ and march through $T$, using probes to ``steer'' them in the direction of the robber, until they push the robber to a leaf of $T$ and, ultimately, locate him.  Throughout the proof we will sometimes tacitly assume that the cops never probe the vertex containing the robber, since if this happens they immediately win.

Let $r$ be a leaf of $T$ and let $r'$ denote its neighbor. The cops begin by probing all vertices of $B_r \cap B_{r'}$; note that there are exactly $k$ such vertices because $T$ is restricted a restricted smooth tree decomposition.  Note that since $\size{B_r} \le k+1$, all vertices in $B_r$ get probed except, perhaps, for one.  By Lemma \ref{lem:oneside}, all vertices consistent with the robber's response lie either in $B_r$ or in bags corresponding to other nodes of $T$.  In the former case, either the cops have probed the robber's vertex, or the robber occupies the lone unprobed vertex in $B_r$; either way, the cops win.  So, suppose otherwise.

More generally, for a node $a$ and child $b$ in $T$, we say that the cops have \textit{secured edge $ab$} if they have just probed $B_a \cap B_b$ 
and they have learned that all bags containing the robber's vertex correspond to nodes in the component of $T-ab$ containing $b$.  (At the current point in the game, the cops have just secured edge $rr'$.)  We will show that if the cops have just secured some edge $ab$, then within the next two rounds, they can either locate the robber or move farther down the tree -- that is, they will have secured edge $a'b'$ for some node $a'$ and child $b'$, where $\dist(r,a') > \dist(r,a)$.  Since $T$ is finite, the cops cannot keep moving farther from $r$ indefinitely, so this process must eventually terminate with the cops locating the robber.

Suppose the cops have just secured some edge $ab$, and consider the state of the game before the ensuing recontamination phase.  If $b$ is a leaf of $T$, then the robber must occupy some vertex in $B_b$.  The cops have just probed $B_a \cap B_b$, so they have thereby probed all but at most one vertex of $B_b$; hence, either they have probed the robber's vertex, or the robber occupies the lone unprobed vertex in $B_b$.  Either way, the cops win.

Suppose instead that $\deg(b) \ge 2$.  Consider the state of the game after the recontamination phase.  Let $c$ be a child of $b$ and, if $\deg(b) = 3$, let $d$ be the other child.  Let $T_a$ and $T_c$ denote the components of $T-b$ containing $a$ and containing $c$, respectively; if $b$ has two children, then let $T_d$ denote the component of $T-b$ containing $d$.  Let $V_a$ (resp. $V_c, V_d$) denote $\cup_{f \in V(T_a)} B_f \setminus B_b$ (resp. $\cup_{f \in V(T_c)} B_f \setminus B_b$, $\cup_{f \in V(T_d)} B_f \setminus B_b$).  Before recontamination, the cops knew that the robber occupied some vertex in $B_b \cup V_c \cup V_d$.  Moreover, he was not in $B_a \cap B_b$; since $B_a \cap B_b$ separates $V_a$ from the other vertices of $G$, the robber cannot have left $B_b \cup V_c \cup V_d$ with his last move.  

The cops now probe $B_b \cap B_c$.  Let $x$ denote the unprobed vertex of $B_b$.  By Lemma \ref{lem:oneside}, all vertices consistent with the robber's response lie either in $B_b \cup V_d$ or in $V_c$.  In the latter case, the cops have secured edge $bc$, as desired.  Suppose instead that the cops learn that the robber must be in $B_b \cup V_d$.  If $b$ has only one child, then $V_d = \emptyset$ and so the robber must occupy some vertex in $B_b$; because the cops have just probed all vertices of $B_b$ other than $x$, the robber must occupy $x$, and the cops have won.  Thus, suppose $b$ has two children.  If $B_b \cap B_c = B_b \cap B_d$, then the cops' last probe also told them whether the robber was in $B_b$ or in $V_d$; in the former case the robber must occupy vertex $x$, while in the latter case they have secured edge $bd$.  Suppose then that $B_b \cap B_c \not = B_b \cap B_d$, and note that because $(T, \mathcal{B})$ is a restricted smooth tree decomposition, it follows that $x \in B_b \cap B_d$.  Because the cops probed $B_b \cap B_c$, which separates $V_c$ from $V_a \cup V_d$, the robber cannot move into $V_c$ with his subsequent move; however, he could have occupied $x$, and thus could have moved from there to some neighbor of $x$.  Hence the robber occupies some vertex in $N[x] \cup B_b \cup V_d$.

The cops now probe $B_b \cap B_d$.  By Lemma \ref{lem:oneside}, this probe tells the cops whether or not the robber occupies some vertex in $V_d$.  If he does, then the cops have secured edge $bd$.  Otherwise, the robber must occupy some vertex in $N[x]$.  However, since $x \in B_b \cap B_d$, the cops have just probed $x$, so the robber's response to that probe uniquely determines his location, and the cops win.
\end{proof}

We remark that Theorem \ref{thm:treewidth_restricted} implies that $\dirloc(G)$ is bounded above by the \textit{pathwidth} of $G$, i.e. the minimum width of a tree decomposition in which the tree $T$ is a path.  This is so because given an optimal smooth path decomposition of $G$, one can simply merge identical (necessarily adjacent) bags to obtain a restricted smooth tree decomposition of $G$ whose width is the pathwidth of $G$.  

Additionally, every graph $G$ has a $(\tw(G)+1)$-restricted tree decomposition: we can obtain one by starting with a smooth tree decomposition of $G$ with width $\tw(G)$ and repeatedly splitting vertices of degree greater than 3.  This observation yields an alternative proof that $\dirloc(G) \le \tw(G)+1$ (as already shown in Corollary \ref{cor:treewidth_degen}).  

We are unaware of any graphs $G$ for which $\dirloc(G) = \tw(G)+1$, and in fact we suspect that $\dirloc(G) \le \tw(G)$ for all $G$.  Unfortunately, this cannot be obtained as an immediate consequence of Theorem \ref{thm:treewidth_restricted}, because not every graph $G$ has a restricted smooth tree decomposition of width $\tw(G)$.  However, for graphs with small treewidth, one can always find such a tree decomposition; in particular, if $\tw(G) \le 2$, then $\dirloc(G) \le \tw(G)$.  As a first step toward proving this, we state a useful lemma.  Recall that given a graph $G$ and a vertex $v$ in $G$, the operation of \textit{splitting} $v$ entails replacing $v$ with two new adjacent vertices $v_1$ and $v_2$ and, for each $u \in N_G(v)$, adding either edge $uv_1$ or edge $uv_2$.  (Refer to Figure \ref{fig:treewidth_split}.)

\begin{center}
\begin{figure}
\,\hfill
\begin{tikzpicture}
[anchor=base, baseline, inner sep=0mm, semithick,
 vertex/.style={draw=gray, fill=white, circle, inner sep=2pt, minimum size=0.8cm},
 vertexlabel/.style={draw=none, fill=none, shape=rectangle, inner sep=2pt, font=\small},
 textnode/.style={draw=none, fill=none, shape=rectangle, inner sep=2pt},
 xscale=3,yscale=3.35]

\node (v) at (0,0) [vertex] {$v$};
\node (w) at (-0.5,0) [vertex] {$w$};
\node (x) at (-0.25,0.4) [vertex] {$x$};
\node (y) at (0.25,0.4) [vertex] {$y$};
\node (z) at (0.5,0) [vertex] {$z$};
\node (invis) at (0,-0.2) {};

\draw (w) -- (v) -- (x);
\draw (y) -- (v) -- (z);
\end{tikzpicture}
\quad\quad\quad{\Large $\mathbf{\longrightarrow}$}\quad\quad\quad
\begin{tikzpicture}
[anchor=base, baseline, inner sep=0mm, semithick,
 vertex/.style={draw=gray, fill=white, circle, inner sep=2pt, minimum size=0.8cm},
 vertexlabel/.style={draw=none, fill=none, shape=rectangle, inner sep=2pt, font=\small},
 textnode/.style={draw=none, fill=none, shape=rectangle, inner sep=2pt},
 xscale=3,yscale=3.35]

\node (v1) at (-0.25,0) [vertex] {$v_1$};
\node (v2) at (0.25,0) [vertex] {$v_2$};
\node (w) at (-0.75,0) [vertex] {$w$};
\node (x) at (-0.25,0.4) [vertex] {$y$};
\node (y) at (0.25,0.4) [vertex] {$x$};
\node (z) at (0.75,0) [vertex] {$z$};

\draw (w) -- (v1) -- (x);
\draw (y) -- (v2);
\draw (v1) -- (v2) -- (z);
\end{tikzpicture}
\,\hfill\,
\caption{Splitting vertex $v$ into $v_1$ and $v_2$; the neighbors of $v$ may be partitioned between $v_1$ and $v_2$ arbitrarily.}
\label{fig:treewidth_split}
\end{figure}
\end{center}

\begin{lemma}\label{lem:treewidth_split}
Let $(T, \mathcal{B})$ be a tree decomposition of a graph $G$.  If $T'$ is obtained from $T$ by splitting some $a \in V(T)$, and $\mathcal{B}'$ is formed from $\mathcal{B}$ by removing $B_a$ and adding a copy of $B_a$ for each of the two new vertices in $T'$, then $(T', \mathcal{B}')$ is a tree decomposition of $G$ with the same width as $(T, \mathcal{B})$. 
\end{lemma}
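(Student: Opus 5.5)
We check the two axioms of Definition \ref{defn:treewidth} directly for $(T',\mathcal{B}')$ and then compare widths. Write $a_1,a_2$ for the two new nodes of $T'$ that replace $a$, so that $B'_{a_1}=B'_{a_2}=B_a$ and every other bag is unchanged. Since $a_1a_2\in E(T')$ and each former neighbor of $a$ in $T$ is now adjacent to exactly one of $a_1,a_2$, $T'$ is again a tree. Moreover, contracting the edge $a_1a_2$ recovers $T$, with the contracted node sent to $a$.

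\textbf{Property (ii).} Let $uv\in E(G)$. Some bag $B_c$ of $\mathcal{B}$ contains both $u$ and $v$. If $c\neq a$, then $B_c$ is still a bag of $\mathcal{B}'$. If $c=a$, then $B'_{a_1}=B_a$ contains both endpoints. Either way, property (ii) holds.

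\textbf{Property (i).} Fix $v\in V(G)$ and let $X'=\{c\in V(T'): v\in B'_c\}$. Property (i) is equivalent to saying that $X'$ induces a connected subtree of $T'$, so it suffices to prove that. We already know that $X=\{c\in V(T): v\in B_c\}$ induces a connected subtree of $T$. There are two cases.
\begin{itemize}
\item If $a\notin X$, then $X'=X$. Every path in $T'$ between nodes other than $a_1,a_2$ avoids $a_1,a_2$ exactly when the corresponding path in $T$ avoids $a$. Hence $X'$ is connected.
\item If $a\in X$, then $X'=(X\setminus\{a\})\cup\{a_1,a_2\}$, which is the preimage of $X$ under the contraction map $T'\to T$. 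Take two nodes of $X'$; each is $a_1$, $a_2$, or a node $c$ of $X\setminus\{a\}$. Map them to $X$ and join the images by the unique path in $T$, which lies inside $X$. Lift this path to $T'$. Each occurrence of $a$ on the path becomes one of $a_1,a_2$, or the edge $a_1a_2$ if the path enters $a$ from a neighbor attached to $a_1$ and leaves through a neighbor attached to $a_2$. The lifted path stays inside $X'$, so $X'$ is connected.
\end{itemize}

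\textbf{Width.} The multiset of bag sizes changes only by replacing one copy of $|B_a|$ with two copies. So the maximum bag size, and hence the width, is unchanged.

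The only step requiring care is the lifting argument in the second case, and it is routine. Everything else is immediate bookkeeping.
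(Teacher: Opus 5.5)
Your proposal is correct and follows essentially the same argument as the paper: property (ii) and the width hold because every bag of $\mathcal{B}'$ is already a bag of $\mathcal{B}$, and property (i) follows by comparing paths in $T'$ with paths in $T$ under the identification of $a_1,a_2$ with $a$. The only cosmetic difference is that you phrase (i) as connectivity of $X'$ and lift $T$-paths up to $T'$, whereas the paper projects the unique $T'$-path down to $T$.
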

\begin{proof}
Fix $a \in V(T)$, let $N(a) = \{b_1, \dots, b_m\}$, and suppose that $T'$ is formed by splitting $a$ into $a_1$ and $a_2$, with $B_{a_1} = B_{a_2} = B_a$.  Note that every bag used in $T'$ was also used in $T$; it follows that the width of $T'$ is the same as that of $T$ and that for every edge $uv$ in $G$, there is a bag in $\mathcal{B}'$ containing both $u$ and $v$.  

Now consider any nodes $c$ and $d$ in $T'$ with $v \in B_c \cap B_d$ for some $v \in V(G)$.  The unique $c,d$-path in $T'$ corresponds to an analogous path in $T$, in which the vertex $a$ (if it appears at all) has been replaced with one or both of $a_1$ and $a_2$.  Since $(T, \mathcal{B})$ is a tree decomposition of $G$, all bags corresponding to vertices along the path in $T$ contain $v$, and hence so do all bags corresponding to vertices along the path in $T'$.  Consequently, $(T', \mathcal{B}')$ is a tree decomposition of $G$, as claimed. 
\end{proof}

\begin{theorem}\label{thm:small_treewidth}
If $\tw(G) \le 2$, then $\dirloc(G) \le \tw(G)$.
\end{theorem}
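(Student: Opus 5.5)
We split according to the value of $\tw(G)$. If $\tw(G) = 0$, then $G$ (being connected) is a single vertex, and no probes are needed. If $\tw(G) = 1$, then $G$ is a tree and hence chordal, so Theorem \ref{thm:chordal} gives $\dirloc(G) = 1$. The substantive case is $\tw(G) = 2$. Here the plan is to build a restricted smooth tree decomposition of width $2$ and then apply Theorem \ref{thm:treewidth_restricted}.

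We begin with a smooth tree decomposition $(T,\mathcal{B})$ of $G$ of width $2$, which exists by \cite{Bod96}. Every bag then has exactly three vertices, and adjacent bags share exactly two. The only defect is that $T$ may have nodes of degree greater than $3$, so we fix a node $a$ with $\deg_T(a) = m \ge 4$ and $B_a = \{x,y,z\}$. Each neighbor $b_i$ of $a$ satisfies $B_a \cap B_{b_i} \in \{\{x,y\},\{x,z\},\{y,z\}\}$, so the edges at $a$ fall into at most three classes according to the $2$-subset they share with $B_a$.

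We replace $a$ by a path $P$ of new nodes. First suppose some class is nonempty; each new node gets a bag that is either all of $B_a$ (size $3$) or one of its $2$-subsets (size $2$), and each $b_i$ is attached to a node of $P$. For each $2$-subset $\{x,y\}$ of $B_a$ whose class has $t$ members, we insert a caterpillar segment: a path of nodes with bag $\{x,y\}$, consecutive ones sharing $2$ vertices, each having at most one attached neighbor $b_i$. Each such node has degree at most $3$, and attaching $b_i$ to it still gives intersection $B_{b_i} \cap \{x,y\} = \{x,y\}$ of size $2 = k$. We then join the three segments to each other via nodes with bag $B_a$ (size $3$). A node with bag $\{x,y\}$ meets a node with bag $B_a$ in $\{x,y\}$, of size $2$. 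Two consecutive $B_a$-nodes share $3$ vertices, which is not allowed since adjacent bags must meet in exactly $k=2$ vertices. To avoid this, the layout will be: segment for $\{x,y\}$, then a $B_a$ node, then segment for $\{y,z\}$, then a $B_a$ node, then segment for $\{x,z\}$. Alternatively, a single $B_a$ node of degree $3$ can serve as a hub for the three segments. Empty classes are simply omitted. Throughout, each $b_i$'s bag meets its attachment bag in exactly $2$ vertices.

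Next we verify that the result is still a tree decomposition. Property (ii) is immediate, because $B_a$ is kept as a bag. For property (i), every new bag is a subset of $B_a$, and every vertex of $B_a \cap B_{b_i}$ is present along the entire connecting route, provided the segments are placed so that paths between occurrences stay inside bags containing that vertex. This is exactly the point that needs checking. For instance, with the hub layout, $x$ lies in the hub and in segments $\{x,y\}$ and $\{x,z\}$ but not in segment $\{y,z\}$, and the hub is adjacent to each segment's end, so the occurrences of $x$ form a connected subtree; the same holds for $y$ and $z$. This is the same reasoning as in Lemma \ref{lem:treewidth_split}, and the hub construction can be obtained by iterating that lemma and then shrinking bags.

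Repeating this surgery at every high-degree node produces $\Delta(T') \le 3$ with bag sizes in $\{2,3\}$ and all edge intersections of size $2$. That is a restricted smooth tree decomposition of width $2$, and Theorem \ref{thm:treewidth_restricted} then gives $\dirloc(G) \le 2$. The main obstacle is getting the local gadget right, so that no two adjacent new bags share $3$ vertices and the connectivity condition (i) holds for each of $x,y,z$. The hub-plus-three-segments layout handles both conditions cleanly.
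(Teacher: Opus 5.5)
Your proof is correct and is essentially the paper's argument. The paper likewise starts from a smooth width-$2$ decomposition, groups the neighbors of a high-degree node $a$ by the $2$-subset of $B_a$ they share, and keeps $B_a$ as a degree-$3$ hub joined to nodes with bags $\{u,v\},\{u,w\},\{v,w\}$; it then splits those size-$2$ nodes via Lemma \ref{lem:treewidth_split}, which produces exactly your hub-plus-caterpillar gadget in two stages rather than one. You should discard the linear layout (segment $\{x,y\}$, then $B_a$, then segment $\{y,z\}$, then $B_a$, then segment $\{x,z\}$), because there the occurrences of $x$ are separated by the $\{y,z\}$ segment and condition (i) fails; the hub layout you ultimately adopt is the correct one.
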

\begin{proof}
If $\tw(G) = 1$, then $G$ must be a tree, so Theorem \ref{thm:chordal} implies that $\dirloc(G) \le 1$.

Suppose instead that $\tw(G) = 2$.  It suffices to show that $G$ has a restricted smooth tree decomposition of width 2; the result will then follow by Theorem \ref{thm:treewidth_restricted}.  Let $(T, \mathcal{B})$ be a tree decomposition of $G$ having width 2.  Without loss of generality, we may further assume that $(T, \mathcal{B})$ is a smooth tree decomposition.  

We will show how to modify $(T, \mathcal{B})$ to produce a restricted smooth tree decomposition of width 2.  To do this, we must ensure that the tree underlying the new decomposition has no node of degree exceeding 3. 
%
We begin by modifying $(T, \mathcal{B})$ to obtain a decomposition in which every node whose degree exceeds 3 corresponds to a bag of size 2.  Suppose $T$ has a node $a$ with $\deg(a) > 3$ and $\size{B_a} = 3$.  Let $B_a = \{u, v, w\}$.  Because $(T, \mathcal{B})$ is a smooth tree decomposition, for any neighbor $b$ of $a$, the bag $B_b$ contains exactly two of $u$, $v$, and $w$.  Let 
\[\mathcal{F}_{uv} = \{b \in N(a) \, : \, w \not \in B_b\}, \,\, \mathcal{F}_{uw} = \{b \in N(a) \setminus \mathcal{F}_{uv} \, : \, v \not \in B_b\}, \,\, \text{ and } \,\, \mathcal{F}_{vw} = N(a) \setminus (\mathcal{F}_{vw} \cup \mathcal{F}_{uw}).\]
Note that $\mathcal{F}_{uv}$, $\mathcal{F}_{uw}$, and $\mathcal{F}_{vw}$ partition $N(a)$; moreover, for all $b \in \mathcal{F}_{uv}$ (resp. $\mathcal{F}_{uw}, \mathcal{F}_{vw}$) we have $w \not \in B_b$ (resp. $v \not \in B_b, u \not \in B_b\}$.

We now modify the decomposition as follows.  Delete all edges incident to node $a$.  Add new nodes $a_{uv}$, $a_{uw}$, and $a_{vw}$, each adjacent to $a$.  Let $B_{a_{uv}} = \{u,v\}$, $B_{a_{uw}} = \{u,w\}$, and $B_{a_{vw}} = \{v,w\}$. Additionally, add an edge from $a_{uv}$ (resp. $a_{uw}, a_{vw}$) to each node in $\mathcal{F}_{uv}$ (resp. $\mathcal{F}_{uw}$, $\mathcal{F}_{vw}$).  (Refer to Figure \ref{fig:treewidth_uvw}.)  

\begin{figure}
\begin{tikzpicture}
[anchor=base, baseline, inner sep=0mm, semithick,
 vertex/.style={draw=gray, fill=white, ellipse, inner sep=2pt, minimum size=0.35cm},
 vertexlabel/.style={draw=none, fill=none, shape=rectangle, inner sep=2pt, font=\small},
 textnode/.style={draw=none, fill=none, shape=rectangle, inner sep=2pt},
 xscale=2.2,yscale=2]

\node (a) at (0,0) [vertex, draw=black, ultra thick, fill=gray!20] {\footnotesize $\{u,v,w\}$};
\node (atext) at ($(a)+(0,-0.2)$) [vertexlabel, anchor=north] {\small $\mathbf{a}$};
\node (b) at (-1,0) [vertex] {\footnotesize $\{u,v,x\}$};
\node (c) at (-0.7,0.5) [vertex] {\footnotesize $\{u,x,y\}$};
\node (d) at (0,0.8) [vertex] {\footnotesize $\{u,v,z\}$};
\node (e) at (0.7, 0.5) [vertex] {\footnotesize $\{u,w,x\}$};
\node (f) at (1,0) [vertex] {\footnotesize $\{u,w\}$};
\node (bn1) at (-1.6,0.05) {};
\node (bn2) at (-1.5,0.3) {};
\node (cn) at (-0.9,0.9) {};
\node (dn) at (0,1.25) {};
\node (en1) at (0.8,0.93) {};
\node (en2) at (1.2,0.8) {};
\node (fn) at (1.6,0.05) {};

\draw (a) -- (b) -- (bn1);
\draw (b) -- (bn2);
\draw (a) -- (c) -- (cn);
\draw (a) -- (d) -- (dn);
\draw (a) -- (e) -- (en1);
\draw (e) -- (en2);
\draw (a) -- (f) -- (fn);
\end{tikzpicture}
\quad{\Large $\mathbf{\longrightarrow}$}\quad
\begin{tikzpicture}
[anchor=base, baseline, inner sep=0mm, semithick,
 vertex/.style={draw=gray, fill=white, ellipse, inner sep=2pt, minimum size=0.35cm},
 vertexlabel/.style={draw=none, fill=none, shape=rectangle, inner sep=2pt, font=\small},
 textnode/.style={draw=none, fill=none, shape=rectangle, inner sep=2pt},
 xscale=2.2,yscale=2]

\node (a) at (0,0.1) [vertex, draw=black, ultra thick, fill=gray!20] {\footnotesize $\{u,v,w\}$};
\node (atext) at ($(a)+(0,-0.2)$) [vertexlabel, anchor=north] {\small $\mathbf{a}$};
\node (auv) at (-0.6,-0.35) [vertex, draw=black, ultra thick, fill=gray!20] {\footnotesize $\{u,v\}$};
\node (auvtext) at ($(auv)+(0.1,-0.2)$) [vertexlabel, anchor=north] {\small $\mathbf{a_{uv}}$};
\node (avw) at (0.6,-0.35) [vertex, draw=black, ultra thick, fill=gray!20] {\footnotesize $\{v,w\}$};
\node (avwtext) at ($(avw)+(0,-0.2)$) [vertexlabel, anchor=north] {\small $\mathbf{a_{vw}}$};
\node (auw) at (0,0.65) [vertex, draw=black, ultra thick, fill=gray!20] {\footnotesize $\{u,w\}$};
\node (auwtext) at ($(auw)+(0.3,-0.1)$) [vertexlabel, anchor=north west] {\small $\mathbf{a_{uw}}$};
\node (b) at (-1.2,-0.9) [vertex] {\footnotesize $\{u,v,x\}$};
\node (c) at (-1.5,-0.35) [vertex] {\footnotesize $\{u,x,y\}$};
\node (d) at (-1.0,0.1) [vertex] {\footnotesize $\{u,v,z\}$};
\node (e) at (-0.5, 1.2) [vertex] {\footnotesize $\{u,w,x\}$};
\node (f) at (0.5, 1.2) [vertex] {\footnotesize $\{u,w\}$};
\node (bn1) at (-1.7,-1.1) {};
\node (bn2) at (-1.75,-0.7) {};
\node (cn) at (-1.75,0.05) {};
\node (dn) at (-1.05,0.5) {};
\node (en1) at (-0.7,1.6) {};
\node (en2) at (-0.3,1.6) {};
\node (fn) at (0.7,1.6) {};
\node (invis) at (0,-1.2) {};

\draw (a) -- (auv) -- (b) -- (bn1);
\draw (b) -- (bn2);
\draw (a) -- (auv) -- (c) -- (cn);
\draw (a) -- (auv) -- (d) -- (dn);
\draw (a) -- (auw) -- (e) -- (en1);
\draw (e) -- (en2);
\draw (a) -- (auw) -- (f) -- (fn);
\draw (a) -- (avw);
\end{tikzpicture}
\caption{Partitioning the neighbors of $a$ between $a_{uv}$, $a_{uw}$, and $a_{vw}$.}
\label{fig:treewidth_uvw}
\end{figure}

Refer to the resulting tree decomposition as $(T', \mathcal{B}')$.  To show that this is a valid tree decomposition, we must verify the two properties in Definition \ref{defn:treewidth}.  Property (ii) is clear, because every bag corresponding to a node in $T$ also corresponds to the same node in $T'$.  For property (i), fix $x \in V(G)$ and let $b$ and $c$ be nodes in $T'$ such that $x \in B_b \cap B_c$; we must show that $x \in B_d$ for every node $d$ on the unique $b,c$-path in $T'$.  If the $b,c$-path in $T'$ does not pass through $a$, then either the path does not use any vertex in $\{a_{uv}, a_{uw}, a_{vw}\}$, or the path ends at one of these vertices.  In the former case, the $b,c$-path in $T'$ is identical to the $b,c$-path in $T$, while in the latter case it is the same as the $b,a$-path in $T'$ except with $a$ replaced by one of $a_{uv}$, $a_{uw}$, and $a_{vw}$; in both cases, the desired property holds due to $T$ being a tree decomposition.  

Suppose instead that the $b,c$-path in $T'$ passes through $a$.  It suffices to argue that the claim holds when $c = a$, since any path passing through $a$ can be split into two paths having $a$ as an endpoint.  Suppose that the $b,a$-path in $T'$ passes through $a_{uv}$; the other two cases are similar.  The $b,a$-path in $T$ is identical to the path in $T'$, except that the latter includes $a_{uv}$ immediately prior to $a$; hence it suffices to argue that $x \in B_{a_{uv}}$.  Let $d$ denote the neighbor of $a$ in the $b,a$-path in $T$.  Because the $b,a$-path in $T'$ passes through $a_{uv}$, we know that $d \in \mathcal{F}_{uv}$; hence $w \not \in B_{d}$ and so $x \not = w$.  We also know that $x \in B_a = \{u,v,w\}$.  Thus $x \in \{u,v\} = B_{a_{uv}}$, which completes the proof that $T'$ is a tree decomposition.

Note that all of the new nodes introduced in $T'$ correspond to bags of size 2, and that the original node $a$ has degree 3 in $T'$; hence, compared to $T$, the decomposition $T'$ has fewer vertices with degree exceeding 3 that correspond to bags of size 3.  Repeating this process will eventually produce a tree decomposition in which every node with degree exceeding 3 corresponds to a bag of size at most 2.  Additionally, note that this modification has maintained the property for any pair of adjacent nodes in $T'$, the corresponding bags intersect in exactly two vertices of $G$.  

Finally, we eliminate nodes of degree exceeding 3.  Let $a$ be a node with degree greater than 3, and let $N(a) = \{b_1, \dots, b_k\}$.  Split $a$ into two new vertices $a_1$ and $a_2$, with $a_1$ adjacent to $b_1$ and $b_2$, and $a_2$ adjacent to $b_3, \dots, b_k$; by Lemma \ref{lem:treewidth_split}, this yields a new tree decomposition of $G$ with the same width.  Moreover, in the new decomposition, $a_1$ has degree 3 and $a_2$ has degree $k-1$, so this operation has decreased the sum $\sum_{v} \max\{0, \deg(v)-3\}$.  Repeating this process, we can eventually produce a tree decomposition in which $\sum_{v} \max\{0, \deg(v)-3\} = 0$; such a decomposition necessarily has maximum degree at most 3.  Furthermore, every vertex that gets split corresponds to a bag with at most two elements, so once again we have preserved the property that adjacent nodes correspond to bags that share exactly two vertices of $G$.  Finally, the new decomposition clearly has width 2, since none of the operations we have applied have changed the width of the decomposition.  Hence we have produced a restricted smooth tree decomposition of $G$ with width 2, as desired. 
\end{proof}

\begin{corollary}
If $G$ is outerplanar, then $\dirloc(G) \le 2$.
\end{corollary}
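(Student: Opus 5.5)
The corollary should follow from Theorem \ref{thm:small_treewidth} once we recall that outerplanar graphs have treewidth at most $2$. The plan is to establish $\tw(G) \le 2$ for every outerplanar $G$ and then invoke Theorem \ref{thm:small_treewidth}. That theorem gives $\dirloc(G) \le \tw(G) \le 2$.

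\textbf{Bounding the treewidth.} For the treewidth bound, I would use the standard argument that outerplanar graphs are subgraphs of maximal outerplanar graphs. Adding edges cannot decrease treewidth, so it suffices to handle maximal outerplanar $G$ with at least three vertices. Such a graph is a triangulated polygon, and it is chordal: every cycle of length at least four in it has a chord, namely a diagonal of the triangulation. Its weak dual, with one node per bounded triangular face and edges between faces sharing a diagonal, is a tree.

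\textbf{Building the decomposition.} Next I would take that tree $T$ and assign to each node the bag consisting of the three vertices of its triangle. This gives a tree decomposition of width $2$.
\begin{itemize}
\item Every edge of $G$ lies on some triangle, so property (ii) of Definition \ref{defn:treewidth} holds.
\item For property (i), the triangles containing a fixed vertex $v$ form a fan around $v$. Consecutive triangles in the fan share a diagonal incident to $v$, so they form a path in $T$, which makes the set of nodes whose bags contain $v$ connected.
\end{itemize}
Alternatively, one can cite the fact that outerplanar graphs exclude $K_4$ as a minor, and graphs without a $K_4$ minor have treewidth at most $2$.

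\textbf{Remaining cases.} Graphs with fewer than three vertices, or whose treewidth comes out below $2$, are covered directly. If $\tw(G) \le 1$ then $G$ is a tree (or a single vertex), and Theorem \ref{thm:chordal} gives $\dirloc(G) \le 1$. Theorem \ref{thm:small_treewidth} also handles these cases as stated.

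The only step requiring real care is the treewidth fact. It is classical, so I expect to cite it (or the $K_4$-minor characterization) rather than re-prove it in detail.
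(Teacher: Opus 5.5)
Your proposal is correct and is the same argument as the paper's proof: combine Theorem \ref{thm:small_treewidth} with the classical fact that outerplanar graphs have treewidth at most $2$. The paper only cites that fact. Your sketch via the weak dual of a maximal outerplanar supergraph, with the fan argument for property (i), is a valid justification of it.
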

\begin{proof}
This is immediate from Theorem \ref{thm:small_treewidth} and the fact that outerplanar graphs have treewidth at most 2.
\end{proof}

\section{Projective planes}\label{sec:dirloc_proj_planes}


In this section, we consider the directional localization game on incidence graphs of finite projective planes.  Recall that a \textit{finite projective plane} is a collection of \textit{points} and \textit{lines}, with each line being a set of points, such that:
\begin{itemize}
\item [\textbf{(1)}] any two distinct lines contain exactly one common point; 
\item [\textbf{(2)}] any two distinct points belong to exactly one common line; and
\item [\textbf{(3)}] there exist four distinct points, of which no line contains more than two.
\end{itemize}
It is well-known that these axioms imply, as a consequence, that every line contains exactly $q+1$ points and every point lies on exactly $q+1$ lines, for some $q$; we refer to $q$ as the \textit{order} of the projective plane.  

Given a finite projective plane, define its \textit{incidence graph} to be the graph having one vertex for each point, one vertex for each line, and an edge joining a point $p$ with a line $\ell$ if and only if $p \in \ell$.  We will refer to the vertices corresponding to points as \textit{point vertices} and those corresponding to lines as \textit{line vertices}, with $\mathcal{P}$ denoting the set of all point vertices and $\mathcal{L}$ the set of line vertices.  We will sometimes refer to \textit{duality}; this is the principle that because the axioms of a projective plane are symmetric with respect to points and lines, taking any statement about a projective plane and swapping the terms ``point'' and ``line'' yields a logically equivalent statement.  (The careful reader may notice that axiom (3) is not inherently symmetric with respect to points and lines; however, it can be shown that in any projective plane there exist four lines of which no point lies on more than two.)

If $G$ is the incidence graph of a projective plane, then it is easily seen that $G$ is a bipartite graph with diameter 3 and that any two vertices in the same partite set of $G$ have exactly one common neighbor (and, in particular, lie at distance 2); we will use these facts repeatedly in what follows.

Numerous graph searching and pursuit-evasion parameters have been studied on incidence graphs of projective planes, and in many cases, these graphs serve as enlightening examples of ``worst-case'' behavior.  For example, in the game of Cops and Robbers, incidence graphs of projective planes are one of the best-known families of Meyniel-extremal graphs, i.e. graphs for which $c(G) = \Theta(\sqrt{n})$: for a projective plane of order $q$, the incidence graph has $2(q^2+q+1)$ vertices and cop number $q+1$ \cite{Pra10}.  In the directional localization game, however, the situation is quite different: as we will show, on the incidence graph of a projective plane, both $\dirlocinc$ and $\dirloccomp$ are 2.  In proving this fact, we will present a cop strategy that is somewhat complex; to simplify the presentation, we begin with several auxiliary lemmas.

\begin{lemma}\label{lem:proj_plane_adj_vxs}
Let $G$ be the incidence graph of a projective plane 
and consider the partial-feedback game on $G$ played with two cops.  If at any point during the game the cops probe adjacent vertices $x$ and $y$, and the probe at $x$ points to $y$ (or vice-versa), then the cops can locate the robber.
\end{lemma}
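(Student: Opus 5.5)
The plan is to exploit the rigid metric structure of the incidence graph $G$. It is bipartite with diameter $3$, and any two vertices in the same partite set have exactly one common neighbour. Throughout, let $r$ denote the robber's vertex and let $d=\dist(x,r)$. If the probe at $x$ points to $y$, then some shortest $x,r$-path passes through $y$, so $\dist(y,r)=d-1$. In particular, the probe at $y$ cannot return $x$. If $d\le 1$ then $r\in\{x,y\}$ has been probed and the cops win, so assume $d\in\{2,3\}$ and let $z\ne x$ be the neighbour of $y$ returned by the probe at $y$.

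\textbf{Step 1 (locating the robber near $z$).} If $d=2$, then $r$ is a neighbour of $y$, and the probe at $y$ must return $r$ itself, so $r=z$. If $d=3$, then $r$ lies in the same partite set as $y$ and is adjacent to $z$. Moreover $r\notin N(x)$: since $x$ and $z$ are both neighbours of $y$, their unique common neighbour is $y$, and $r\neq y$. The cops do not know $d$, so after the probing phase the robber territory is exactly $N[z]\setminus\{y\}$. This is a ``ball of radius one minus a point'' around a known vertex $z$.

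\textbf{Step 2 (reducing a ball to a point).} The remaining task is to show that two cops can locate a robber whose territory, just after a probing phase, is contained in $N[z]$ for a known vertex $z$. After recontamination the territory lies in $B_2(z)$, the set of vertices within distance $2$ of $z$. Each vertex $v$ at distance exactly $2$ from $z$ determines a unique intermediate neighbour $w(v)$ of $z$, namely the unique common neighbour of $v$ and $z$. My first attempt is to have the cops probe $z$ together with a carefully chosen second vertex, such as $y$ or a vertex at distance $2$ from $z$.
\begin{itemize}
\item If the robber is at $z$, the probe at $z$ wins immediately.
\item Otherwise the probe at $z$ returns some $w\in N(z)$, confining the robber to $N[w]\setminus\{z\}$.
\item The second probe is then used to separate the case $r=w$ from the case $r\in N(w)\setminus\{z\}$. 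I would check distances from the second probe to each of these vertices, again using the uniqueness of common neighbours, and look for a response that distinguishes them.
\end{itemize}

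\textbf{Main obstacle.} I expect the hard step to be that a single round of Step 2 apparently only reproduces a territory of the form $N[w]\setminus\{\cdot\}$ around a new centre $w$, so the argument risks looping. To break the loop, the second cop must be used so that each round either wins outright or removes a vertex from the possible robber territory permanently. One option is to keep probing a fixed vertex, in the spirit of the cycle strategy in Proposition~\ref{prop:simple}(iii). Another is to arrange that the two probes are adjacent and one points to the other in a configuration already handled, so that Step 1 is re-entered with strictly less contamination. I would first try probing $z$ together with the earlier centre $y$, checking by the same distance bookkeeping that the pair of responses either pins down $r$ or leaves a strictly smaller territory. Failing that, I would look for a potential function, such as the number of contaminated vertices at distance $2$ from the current centre, that must decrease each round.
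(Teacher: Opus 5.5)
Your Step~1 is correct and matches the paper's first step: after the initial probes, the robber territory is contained in $N[z]\setminus\{y\}$. The gap is Step~2. That is where all the work lies, and you leave it as a list of things to try. Two of its ingredients also point the wrong way.

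First, restating the goal as ``territory contained in $N[z]$'' throws away the information that makes progress possible. Because $y$ was cleared, every $m\in N(y)\setminus\{z\}$ (for instance $x$) is still clear after recontamination. Such an $m$ cannot be adjacent to any $u\in N(z)\setminus\{y\}$, since the only common neighbour of $u$ and $y$ is $z$. Under your weaker hypothesis, recontamination re-infects the whole partite set containing $z$, because every vertex of that class is within distance $2$ of $z$.

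Second, the pairing you would try first, $z$ together with $y$, loops forever. Take any $p\in N(z)\setminus\{y\}$. A robber at $p$, or at any $\ell\in N(p)\setminus\{z\}$, may answer $p$ to the probe at $z$ and $z$ to the probe at $y$. For $\ell$ this is legal, since $\dist(y,\ell)=3$ and $y\,z\,p\,\ell$ is a shortest path. The territory then becomes $N[p]\setminus\{z\}$, which is your Step~1 configuration again with $(y,z,p)$ in place of $(x,y,z)$.

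Finally, you cannot expect the second probe to separate $r=w$ from $r\in N(w)\setminus\{z\}$ in one round. A robber at $w$ lying at distance $3$ from the second probed vertex may answer with any neighbour of that vertex, mimicking a robber in $N(w)$.

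The missing idea is the choice of the second probe. In round two, probe $z$ together with some $m\in N(y)\setminus\{z\}$, for example $x$ itself, and let $w$ and $b$ be the responses.
\begin{itemize}
\item A contaminated robber in the class of $y$ lies in $N(z)$, so he is at $w$.
\item A contaminated robber in the class of $z$, other than $z$, is adjacent to some $u\in N(z)\setminus\{y\}$, so $w=u\neq y$. He lies at distance $2$ from $m$, so $b$ is his common neighbour with $m$. Also $b\neq w$, because the only common neighbour of $z$ and $m$ is $y$. Hence he must be the unique common neighbour $c$ of $w$ and $b$.
\end{itemize}
So the territory shrinks to a subset of $\{w,c\}$, two adjacent vertices (or to $\{y\}$ if $w=y$). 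In round three the cops probe $w$ and $c$. A robber in $N(c)\setminus\{w\}$ forces the probe at $w$ to return $c$ and the probe at $c$ to return his vertex; symmetrically for $N(w)\setminus\{c\}$. Either way his position is read off. This is the paper's argument: it ends in three rounds, with no loop to break and no potential function needed.
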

\begin{proof}
It suffices to consider the case where, prior to the cops' probe, all vertices of $G$ are contaminated; if the cops can win in this case, then they can clearly win no matter which vertices of $G$ are actually contaminated.

Let $x'$ (respectively, $y'$) denote the robber's response to the probe at $x$ (resp. $y$), and assume that either $x' = y$ or $y' = x$.  By symmetry, let us assume $x \in \mathcal{P}$, $y \in \mathcal{L}$, and $y' = x$.  We may also suppose that the robber occupies neither $x$ nor $y$, since then the cops would win immediately.  $G$ has diameter 3, so the distance from $y$ to the robber is at most 3; hence, the distance from $x$ to the robber is at most 2, and the distance from $x'$ to the robber is at most 1.  Consequently, the only line vertex consistent with the probes -- and thus the only contaminated line vertex -- is $x'$, and the set of consistent point vertices is $N(x') - x$.  

After the ensuing recontamination phase, the set of contaminated point vertices is $N(x')$.  On the next cop turn, one cop probes $x'$, while the other cop probes any uncontaminated vertex $w$ in $N(x)$.  (Note that some such $w$ must exist:  the neighbors of $x'$ all have $x'$ as a common neighbor, so they cannot have any other common neighbors; since $x$ was clear prior to recontamination, its neighbors other than $x'$ cannot have been recontaminated.)  As usual, suppose that the robber does not occupy either vertex probed by the cops.  If the robber occupies a point vertex, then the probe at $x'$ must point to that vertex.  If instead he occupies a line vertex, then both $x'$ and $w$ are at distance 2 from the robber's vertex.  Hence the probe at $x'$ and the probe at $w$ must both point to neighbors of the robber's vertex, so the probe responses' unique common neighbor is the only contaminated line vertex consistent with the probes.  (Note that the responses to the probes are necessarily distinct -- and thus have only one common neighbor -- because the probed vertices share $x$ as a common neighbor and no neighbors of $x$ are contaminated at the time of probing except for $x'$.)

Hence, the cops have narrowed down the robber's possible to location to two adjacent vertices, say $u \in \mathcal{P}$ and $v \in \mathcal{L}$.  In the ensuing recontamination phase, all neighbors of $u$ and $v$ become contaminated.  Finally, on the cops' following turn, they probe $u$ and $v$; suppose that the robber does not occupy either of these vertices.  If the robber occupies a point vertex, then he must occupy some neighbor of $v$ and, moreover, he must be at distance 2 from $u$; thus the probe at $u$ must point to $v$ (since there is only one path of length 2 from $u$ to any given vertex at distance 2), and the probe at $v$ must point to the robber's location.  Likewise, if the robber occupies a line vertex, then the probe at $v$ must point to $u$ and the probe at $u$ must point to the robber's location.  In either case there is only one possible location for the robber, and the cops can clearly determine which case has occurred, so they have successfully located the robber.
\end{proof}

\begin{lemma}\label{lem:2_neighborhoods}
	Let $G$ be the incidence graph of a finite projective plane. If the cops probe  $x \in\mathcal{P}$ and $y \in \mathcal{L}$ with responses $x'$ and $y'$ respectively, then all vertices consistent with these probes lie in  $N(x') \cup N(y')$.
\end{lemma}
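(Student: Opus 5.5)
Let $z$ be any vertex consistent with both probes. We may assume $z\ne x$ and $z\ne y$; otherwise the corresponding probe returns its own vertex, so the response is $x'=x$ or $y'=y$. In that case $z$ lies in $N(x')$ or $N(y')$ only if we read the statement with that degenerate case excluded, which is the intended setting. The plan is a case analysis on which partite set contains $z$. The point is that a probe only gives good information when it is at distance exactly $2$ from $z$: then the response is the unique common neighbour of the probe and $z$, so $z\in N(\text{response})$. We use throughout that $G$ is bipartite with diameter $3$, and that any two vertices in the same partite set are at distance $2$ with a unique common neighbour.

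Suppose first that $z\in\mathcal{P}$. Since $x\in\mathcal{P}$ and $z\ne x$, we have $\dist(x,z)=2$. The response $x'$ lies on a shortest $x,z$-path, so $x'$ is a common neighbour of $x$ and $z$. Hence $z\in N(x')$.

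Suppose instead that $z\in\mathcal{L}$. Then $y\in\mathcal{L}$ and $z\ne y$ give $\dist(y,z)=2$. By the same argument, $y'$ is the unique common neighbour of $y$ and $z$, so $z\in N(y')$.

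The two cases exhaust all possibilities, so every consistent vertex lies in $N(x')\cup N(y')$. The only delicate point is the degenerate case where a probe hits the robber. There the cops win immediately, and I would state that case separately, noting that the lemma concerns responses with $x'\ne x$ and $y'\ne y$. I expect no real obstacle beyond this bookkeeping. The key observation is simply that each probe determines the neighbourhood of $z$ exactly when $z$ lies in the same partite set as the probed vertex.
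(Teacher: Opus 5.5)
Your proof is correct and is essentially the paper's argument. Both set aside the case where a probed vertex is the robber's location, then observe that a consistent vertex $z$ in the same partite set as a probe is at distance $2$ from it, so the response must be a common neighbour of the probe and $z$ (the paper writes out the point case and invokes duality for lines, where you write out both).
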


\begin{proof}
	Let the cops probe a point vertex $x$ and a line vertex $y$ and receive responses of $x'$ and $y'$ respectively.  
    Suppose the robber is on some vertex $r$.  We assume $r\notin \{x,y\}$, since otherwise the cops have located the robber. If $r$ is a point vertex, then it shares a unique neighbor with $x$; this neighbor must necessarily be $x'$, and therefore $r \in N(x')$.  Thus every point vertex that could contain the robber lies in $N(x')$. By duality, a similar argument yields that every line vertex that may contain the robber lies in $N(y')$.  Thus, all vertices consistent with the probes lie in $N(x') \cup N(y')$, as claimed.
\end{proof}

Broadly, the cops' strategy for the partial-feedback game on the projective plane will be to probe so as to gradually shrink the number of contaminated vertices in a given partite set.  Our next three lemmas provide a framework by which they can accomplish this.

\begin{lemma}\label{lem:reduction_w_adjacent_sources}
	Let $G$ be the incidence graph of a finite projective plane of order $q$, and consider the partial-feedback game on $G$.  Suppose that at the end of a probing phase, there exist vertices $x \in \mathcal{P}$ and $y \in \mathcal{L}$ such that:
	\begin{enumerate}
		\item[(1)] all contaminated vertices are contained within $N(x) \cup N(y)$;
		\item[(2)] $x$ and $y$ are adjacent; and
		\item[(3)] $y$ is contaminated.
	\end{enumerate}
	Additionally, let $i$ denote the number of contaminated line vertices.  If $i \ge 2$, then either the cops can win the game, or they can ensure at the end of the following probing phase, all contaminated vertices will be contained in $N(u) \cup N(v)$ for some non-adjacent point vertex $u$ and line vertex $v$, and there will be at most $i-1$ contaminated point vertices.
\end{lemma}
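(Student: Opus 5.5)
The plan is to spend one round probing one point vertex and one line vertex, then read off the new bound from Lemma \ref{lem:2_neighborhoods}. First I record what hypotheses (1)--(3) say about the territory. Since $N(y) \subseteq \mathcal{P}$ and $N(x) \subseteq \mathcal{L}$, the contaminated points lie on the line $y$ and the contaminated lines are among the lines through $x$. Call the contaminated lines $\ell_1 = y, \ell_2, \dots, \ell_i$, all passing through $x$. After recontamination:
\begin{itemize}
\item every contaminated point lies in $N(\ell_1) \cup \dots \cup N(\ell_i)$, because $y$ is contaminated and so already covers $N(y)$;
\item every contaminated line passes through $x$ or through a point of $y$.
\end{itemize}

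For the round, I would take the point probe $u$ to lie on none of $\ell_1, \dots, \ell_i$. Such a $u$ exists because $i \le q+1$ and the lines through $x$ do not cover all points. The response $u'$ is a line through $u$, and by Lemma \ref{lem:2_neighborhoods} every consistent point lies on $u'$. There are two cases.
\begin{itemize}
\item If $u'$ passes through $x$, then $u'$ meets each $\ell_j$ only at $x$. So at most one contaminated point survives, and the cops essentially win.
\item Otherwise $u' \ne \ell_j$ for every $j$, so $u'$ meets $\ell_1 \cup \dots \cup \ell_i$ in at most $i$ points, one on each $\ell_j$.
\end{itemize}
To get down to $i-1$, I want the line probe $v$ to remove one of these candidate points. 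I would first try $v = y$. Its response $v'$ is a point of $y$. If the robber is on a point $r \ne v'$ of $y$, the probe at $y$ would have had to return $r$ itself. Hence among points on $y$ only $v'$ survives, and only if $v' = u' \cap y$; otherwise $u' \cap y$ is eliminated. In the first case, the robber is at that single point or on a line through $v'$. In either case the point count on $u'$ drops from $i$ to at most $i-1$. The points $\ell_j \cap u'$ for $j \ge 2$ are all off $y$, since each $\ell_j$ meets $y$ only at $x$, and $x \notin u'$.

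It remains to verify the form of the conclusion. By Lemma \ref{lem:2_neighborhoods}, the territory after probing lies in $N(u') \cup N(v')$ with $u' \in \mathcal{L}$ and $v' \in \mathcal{P}$, which is the required shape with the roles of $u, v$ in the statement played by $v', u'$. The delicate step, and the one I expect to be the main obstacle, is ensuring that $v'$ and $u'$ are non-adjacent. If instead $v' = u' \cap y$, they are adjacent. In that case I would either show directly that the consistent set is then small enough to locate the robber, or, if the cops' probes themselves are adjacent, switch to the endgame of Lemma \ref{lem:proj_plane_adj_vxs}. Failing that, I would re-choose $v$ to be a line not through $x$ that meets $u'$ at one of the candidate points, which forces non-adjacency. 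Tuning this choice so that the count $i-1$ and the non-adjacency hold simultaneously is where I expect the real work to lie, and the hypothesis $i \ge 2$ is what leaves enough freedom in choosing $u$ and $v$.
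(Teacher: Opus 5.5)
Your proposal has a genuine gap: the round you describe does not work, because the robber can defeat it. With $u$ off every contaminated line and $v=y$, let the robber sit at a contaminated point $p\in \ell_2\setminus\{x\}$.

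\begin{itemize}
\item The probe at $u$ must return the line $u'=up$, which avoids $x$.
\item Since $p\notin y$, we have $\dist(y,p)=3$. So every point of $y$ lies on a shortest $y,p$-path, and the robber may answer $v'=u'\cap y$.
\end{itemize}

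Then all $i$ points $u'\cap\ell_1,\dots,u'\cap\ell_i$ remain contaminated and consistent. The points $u'\cap\ell_j$ with $j\ge 2$ lie off $y$, so the probe at $y$ says nothing about them. Your sentence ``in either case the point count on $u'$ drops'' is therefore false in this case. Moreover $v'\in u'$, so the responses are adjacent, and the cops have not won.

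None of your fallbacks rescues this:
\begin{itemize}
\item $u$ and $y$ are non-adjacent, so Lemma \ref{lem:proj_plane_adj_vxs} does not apply.
\item The consistent set ($i$ points plus up to $q$ lines through $v'$) is not small.
\item You cannot re-choose $v$ after seeing $u'$, since both probes of a round are made simultaneously.
\end{itemize}
The problem is structural. A probe at a point that is not itself a candidate clears no candidate on the returned line. A line probe $v$ clears only points of $v$ other than its response, so a robber sitting off $v$ simply answers $u'\cap v$.

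Two smaller issues:
\begin{itemize}
\item Your $u$ need not exist. When $i=q+1$, the lines through $x$ cover every point.
\item In your first case, ``the cops essentially win'' needs an argument when $v'=x$, since many lines through $x$ can survive and $u'$, $v'$ are then adjacent.
\end{itemize}

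The missing idea is to put the point probe on a contaminated line and make the two probes adjacent; this is where $i\ge 2$ is really used.
\begin{itemize}
\item Take a contaminated line $w_2\ne y$ through $x$, and probe a point $a\in w_2\setminus\{x\}$ together with the line $b=w_2$.
\item If $a'=b$ or $b'=a$, Lemma \ref{lem:proj_plane_adj_vxs} finishes.
\item Otherwise $a'$ is a line through $a$ avoiding $x$. It meets each of the contaminated lines $w_1=y,w_2,\dots,w_i$ through $x$ in exactly one point other than $x$.
\item After recontamination all contaminated points lie on these lines. The intersection of $a'$ with $w_2$ is $a$ itself, which the probe clears. So at most $i-1$ contaminated points survive, whatever the robber answers.
\item Non-adjacency of $a'$ and $b'$ is then automatic: otherwise $a$ and $b'$ would have two distinct common neighbors, $a'$ and $b$.
\end{itemize}
Combined with Lemma \ref{lem:2_neighborhoods}, this gives the conclusion in a single round. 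This is exactly the paper's proof, and it dissolves the non-adjacency step you flagged as the main obstacle.
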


\begin{proof}

    Let $N(x) = \{w_1, \dots, w_{q+1}\}$, where $w_1, \dots, w_{i}$ are contaminated and $w_{i+1}, \dots, w_{q+1}$ are not.  For each $j$, let $W_j = N(w_j) \setminus \{x\}$.  We claim that $\{\{x\}, W_1, \dots, W_{q+1}\}$ is a partition of $\mathcal{P}$.  To see this, consider an arbitrary point vertex $u$.  Either $u = x$, or 
    $u$ and $x$ have a unique common neighbor $z$.  Since $z$ is a neighbor of $x$, we have $z = w_j$ for some $j$, and so $u \in W_j$.  Additionally, $x$ is the unique common neighbor of any two of the $w_j$, the sets $W_j$ must be disjoint.
    
    Since $y$ is a contaminated line vertex adjacent to $x$, without loss of generality we may suppose that $y = w_1$.  Since all contaminated point vertices belong to$N(y)$, it follows that all vertices in $W_j$ for $j \geq 2$ are clear.
	After the ensuing recontamination phase, the set $\{x\} \cup W_1 \cup \dots \cup W_i$ will only contain contaminated vertices and the set $W_{i+1} \cup \dots \cup W_{q+1}$ will only contain clear vertices.

	Now consider the following cop turn.  The cops probe a point vertex $a$ in $W_2$ and the line vertex $b = w_2$; let $a'$ and $b'$, respectively, be the robber's responses to these probes.  If $a' = b$, then the cops can win the game via Lemma \ref{lem:proj_plane_adj_vxs}.  Suppose instead that $a' \neq b$.  By Lemma \ref{lem:2_neighborhoods}, every point vertex consistent with the probes must lie in $N(a')$.  
    Each $w_j$ has one common neighbor with $a'$, so there is a unique vertex $a_j \in W_j \cap N(a')$.  For $j > i$, we know that $a_j$ was clear before the cops' move; additionally, since $a_2 = a$, vertex $a_2$ was cleared by virtue of being probed.  Hence there are at most $i-1$ contaminated point vertices, all of which lie in $N(a')$. Additionally, by Lemma \ref{lem:2_neighborhoods}, the set of contaminated vertices is contained in $N(a') \bigcup N(b')$.  Finally, since $a$ and $b$ are adjacent, it follows that $a'$ and $b'$ must not be adjacent, since otherwise $a$ and $b'$ would have two common neighbors (namely $a'$ and $b$).  Thus the game has reached a state of the desired form; this completes the proof. 
\end{proof}

\begin{lemma}\label{lem:reduction_w_nonadjacent_case_1}
	Let $G$ be the incidence graph of a finite projective plane of order $q$, and consider the partial-feedback game on $G$.  Suppose that at the end of a probing phase, there exist vertices $x \in \mathcal{P}$ and $y \in \mathcal{L}$ such that:
	\begin{enumerate}
        \item[(1)] all contaminated vertices are contained within $N(x) \cup N(y)$; 
		\item[(2)] $x$ and $y$ are nonadjacent; and
		\item[(3)] there exist a clear vertex $\ell \in N(x)$ and a contaminated vertex $m \in N(y)$ such that $\ell m \in E(G)$.
	\end{enumerate}
    Additionally, let $i$ denote the number of contaminated line vertices. If $i \ge 2$, then the cops can guarantee that after some subsequent probing phase, all contaminated vertices will be contained in $N(u) \cup N(v)$ for some non-adjacent point vertex $u$ and line vertex $v$, and there will be at most $i-1$ contaminated vertices in some partite set of $G$.
\end{lemma}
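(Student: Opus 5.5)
The plan is to imitate the proof of Lemma~\ref{lem:reduction_w_adjacent_sources}: spend one round probing an adjacent point/line pair chosen using the edge $\ell m$, then reduce to one of the earlier lemmas. This round may not finish the job, which is why the statement allows ``some subsequent probing phase''.

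\textbf{Setup.} First record the structure of the current state. Since $x\in\mathcal P$ and $y\in\mathcal L$:
\begin{itemize}
\item $m\in N(y)$ is a point vertex and $\ell\in N(x)$ is a line vertex.
\item The $i$ contaminated line vertices lie in $N(x)$; write them $w_1,\dots,w_i$, and complete to $N(x)=\{w_1,\dots,w_{q+1}\}$ with $\ell=w_{q+1}$.
\item All contaminated point vertices lie in $N(y)$.
\end{itemize}
As before, let $W_j=N(w_j)\setminus\{x\}$; these sets together with $\{x\}$ partition $\mathcal P$. Because $x$ and $y$ are nonadjacent, $y$ meets each $w_j$ in exactly one point, so $N(y)$ contains exactly one vertex of each $W_j$. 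In particular $m$ is the vertex of $N(y)$ lying in $W_{q+1}$, since $m$ is adjacent to $\ell$. After recontamination:
\begin{itemize}
\item the contaminated points lie in $\{x\}\cup W_1\cup\dots\cup W_i\cup N(y)$;
\item the contaminated lines lie in $\{w_1,\dots,w_i\}$ together with the lines through contaminated points of $N(y)$.
\end{itemize}

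\textbf{The probe.} The cops now probe the adjacent pair $a=m$ and $b=\ell$, receiving responses $a'$ and $b'$. If $a'=\ell$ or $b'=m$, Lemma~\ref{lem:proj_plane_adj_vxs} lets the cops win. Otherwise, by Lemma~\ref{lem:2_neighborhoods}, every consistent vertex lies in $N(a')\cup N(b')$. As in Lemma~\ref{lem:reduction_w_adjacent_sources}, $a'$ and $b'$ are nonadjacent, since otherwise $a$ and $b'$ would have two common neighbours ($a'$ and $b$). Moreover $a'\neq \ell$ forces $x\notin N(a')$, because the unique line through $x$ and $m$ is $\ell$. Hence $N(a')$ meets each $W_j$ in exactly one vertex, and it meets $W_{q+1}$ exactly in $m$, which was probed and is therefore cleared.

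\textbf{Counting.} Suppose $a'\neq y$. Then $N(a')\cap N(y)=\{m\}$, so the only surviving contaminated points are the $\le i$ vertices $N(a')\cap W_j$ with $j\le i$. This gives at most $i$ contaminated points, which is one too many. To gain the missing unit I will switch to counting line vertices inside $N(b')$, using the same partition argument dualized around $m$: lines through $m$ partition $\mathcal L\setminus\{\ldots\}$, and $b'$, a point on $\ell$, meets each such line once.

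\textbf{Fallback.} If neither partite set drops to $i-1$ after this round, I expect the resulting state, with $u=a'$ and $v=b'$, to again satisfy the hypotheses of either Lemma~\ref{lem:reduction_w_adjacent_sources} (if some contaminated line is adjacent to the point source) or the present configuration with strictly fewer contaminated vertices. I would then iterate, using monotone decrease of the total contamination to terminate.

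\textbf{Main obstacle.} The hard step is this final counting and case analysis. Specifically, it must handle the case $a'=y$ (where $N(a')\cap N(y)$ is all of $N(y)$), and it must ensure that one of the $i$ slots in $W_1,\dots,W_i$ (or the corresponding line slots) is genuinely clear, so that the count really reaches $i-1$.
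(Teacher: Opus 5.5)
Your proposal has a genuine gap. Your round does not prove the lemma, and neither device you offer for the missing unit works; the trouble is the choice of probes. You flag $a'=y$ as the main obstacle, and for this probe pair it is fatal.

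Write $u_k$ for the common neighbour of $y$ and $w_k$, so $m=u_{q+1}$. If the robber sits on $u_1$, he is contaminated after recontamination because $w_1$ was. The only shortest $m,u_1$-path runs through $y$, so he must answer $a'=y$. Since $u_1$ is at distance $3$ from $\ell$, he may answer $b'=x$. Neither answer triggers Lemma~\ref{lem:proj_plane_adj_vxs}, and Lemma~\ref{lem:2_neighborhoods} just returns you to $N(x)\cup N(y)$. Only $m$ and $\ell$ have been cleared:
\begin{itemize}
\item $u_1,\dots,u_i$ and $w_1,\dots,w_i$ all remain contaminated and consistent;
\item every $w_k$ with $i<k\le q$ and $u_k$ contaminated has joined them.
\end{itemize}
The same happens when $a'\neq y$ but $b'=x$, for instance with the robber on a point of $W_1\setminus N(y)$. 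Then the $i$ points of $N(a')$ in $W_1,\dots,W_i$ and the lines $w_1,\dots,w_i$ all survive. So counting lines in $N(b')$ gains nothing. The probe at $\ell$ is also wasted for the point count: it can only clear points of $N(\ell)$, and $N(a')\cap N(\ell)=\{m\}$ was cleared anyway. In these branches you land in the configuration of Lemma~\ref{lem:reduction_w_nonadjacent_case_2}, not Lemma~\ref{lem:reduction_w_adjacent_sources}. You also have at least $i$, possibly more, contaminated lines. So the dichotomy in your fallback is wrong, and there is no monotone quantity forcing the iteration to reach $i-1$.

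The missing idea is the paper's choice of probes: a point $a\in W_{q+1}\setminus\{m\}$ together with the \emph{contaminated} line $b=w_1$. Taking $a\notin N(y)$ rules out $a'=y$, and probing $w_1$ is what produces the ``$-1$''. The paper splits into three cases:
\begin{itemize}
\item Case~1, $a'=w_{q+1}$: only $x$ and $m$ can remain as contaminated points. This finishes directly if $b'\neq x$, and via the dual of Lemma~\ref{lem:reduction_w_adjacent_sources} if $b'=x$.
\item Case~2, $b'$ is the point of $N(a')$ in $W_1$: then $a'b'$ is an edge with $b'$ contaminated, and the dual of Lemma~\ref{lem:reduction_w_adjacent_sources} applies.
\item Case~3, otherwise: that point is cleared by the $w_1$-probe.
\end{itemize}
Your observation that $N(a')\cap N(y)=\{m\}$ does touch a real subtlety. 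With the paper's $a$, the line $a'$ meets $y$ at some $u_k$ with $k\le q$. The paper's count ``$a'$ has at most $i$ contaminated neighbours'' overlooks that this $u_k$ may be contaminated when $i<k\le q$, that is, when $\ell m$ is not the only edge witnessing condition~(3). In that situation Cases~2--3 need not reach $i-1$ in one round, so it requires separate treatment whichever probes you use.
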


\begin{proof}
    Note that by conditions (1) and (2), the vertices $x$ and $y$ must not, themselves, be contaminated. Let $N(x) = \{w_1, \dots, w_{q+1}\}$, where $w_1, \dots, w_{i}$ are contaminated and $w_{i+1}, \dots, w_{q+1}$ are clear.  Each $w_j$ has exactly one common neighbor with $y$, and the $w_j$ all share $x$ as a common neighbor, so no two $w_j$ are adjacent to the same neighbor of $y$.  Thus, we may let $N(y) = \{u_1, \dots, u_{q+1}\}$ such that $w_ju_j \in E(G)$ for all $j$.  By condition (3), we may suppose without loss of generality that $w_{q+1}$ is clear and $u_{q+1}$ is contaminated.
    For each $j$, let $W_j = N(w_j) \setminus \{x\}$. 
    After the recontamination phase, all contaminated point vertices will lie in $N(y) \cup \{x\} \cup W_1 \cup \dots \cup W_i$.  
    
    Let $a$ be any vertex in $W_{q+1}$ other than $u_{q+1}$, and let $b = w_1$.  On the the cops' next turn, they probe $a$ and $b$; let the robber's responses to these probes be $a'$ and $b'$, respectively.  We consider three cases. 
    
    \medskip
    \textbf{Case 1:} $a' = w_{q+1}$. By Lemma \ref{lem:2_neighborhoods}, all point vertices consistent with the probes must lie in $N(a')$. This leaves us with only two potentially contaminated point vertices, namely $u_{q+1}$ and $x$.  We consider two subcases:
    
    \textbf{Case 1a:} $b' \neq x$.  Since the probe at $b$ pointed to a vertex other than $x$, we see that $x$ is not consistent with this probe.  Therefore, the only remaining contaminated point vertex is $u_{q+1}$.  Vertices $a'$ and $b$ share a single common neighbor, namely $x$. Since $b'\neq x$, we see that $a'$ and $b'$ are non-adjacent (since otherwise both $x$ and $b'$ would be common neighbors of $a'$ and $b$).  Thus, by Lemma \ref{lem:2_neighborhoods}, all contaminated vertices are contained in $N(a') \cup N(b')$ for the non-adjacent point vertex $b'$ and line vertex $a'$, and there is only one contaminated point vertex (hence in particular there are at most $i-1$ contaminated point vertices, as claimed).

    \textbf{Case 1b:} $b' = x$.  In this case, $x$ is consistent with the probe at $b$.  Since $N(a') = \{x\} \cup  W_{q+1}$, we see that there are only two contaminated point vertices, namely $u_{q+1}$ and $x$.  However, note that all contaminated vertices lie in $N(a') \cup N(b')$ for adjacent vertices $a'$ and $b'$ with $b'$ being contaminated.  Applying the dual of Lemma \ref{lem:reduction_w_adjacent_sources} -- that is, swapping the roles of points and lines in Lemma \ref{lem:reduction_w_adjacent_sources} -- shows that after an additional probing phase, the cops can either win the game or ensure that all contaminated vertices lie in $N(u) \cup N(v)$ for some nonadjacent point vertex $u$ and line vertex $v$, and there is a single contaminated line vertex (and in particular, there are at most $i-1$ contaminated line vertices, as claimed).

    \medskip
    
    \textbf{Case 2:} $a' \neq w_{q+1}$ and $b'$ is the unique neighbor of $a'$ in $W_{1}$.  By Lemma \ref{lem:2_neighborhoods}, all vertices consistent with the probes lie in $N(a')\cup N(b')$. Since $a'$ has exactly one neighbor in each $W_j$ and only those neighbors in $W_1, \dots, W_i$ were contaminated prior to the probe, we see that $a'$ has at most $i$ contaminated neighbors.  Thus we can apply Lemma \ref{lem:reduction_w_adjacent_sources} to find that after an additional probing phase, all contaminated vertices lie in $N(u)\cup N(v)$ for some nonadjacent point vertex $u$ and line vertex $v$, and there are at most $i-1$ contaminated line vertices.

    \medskip

    \textbf{Case 3:} $a' \neq w_{q+1}$ and $b'$ is not the unique neighbor of $a'$ in $W_1$.  Once again, since $a'$ has exactly one neighbor in each $W_j$ and only those neighbors in $W_1, \dots, W_i$ were contaminated, we again see that $a'$ has at most $i$ contaminated neighbors.  In this case, however, the unique neighbor of $a'$ in $W_1$ -- which must also be adjacent to $b$ -- is not consistent with the probe at $b$, and therefore at most $i-1$ contaminated point vertices remain.  Also, since $a'$ has a unique neighbor in $W_1$, and that neighbor is not $b'$, we see that $a'$ and $b'$ are non-adjacent, as needed.  This completes the proof.

\end{proof}

\begin{lemma}\label{lem:reduction_w_nonadjacent_case_2}
	Let $G$ be the incidence graph of a finite projective plane of order $q$, and consider the partial-feedback game on $G$.  Suppose that at the end of a probing phase, there exist vertices $x \in \mathcal{P}$ and $y \in \mathcal{L}$ such that:
	\begin{enumerate}
		\item[(1)] all contaminated vertices are contained within $N(x) \cup N(y)$; 
		\item[(2)] $x$ and $y$ are nonadjacent; 
		\item[(3)] there do not exist a clear vertex $\ell \in N(x)$ and a contaminated vertex $m \in N(y)$ such that $\ell m \in E(G)$; and
        \item[(4)] there is at least one contaminated point vertex.
	\end{enumerate}
    Additionally, let $i$ denote the number of contaminated line vertices.  Then either the cops can win the game, or they can ensure that the end of the following probing phase, all contaminated vertices will be contained within $N(u) \cup N(v)$ for some non-adjacent point vertex $u$ and line vertex $v$, and there will be at most $i-1$ contaminated point vertices.
\end{lemma}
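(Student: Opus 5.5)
The plan is to imitate the proof of Lemma \ref{lem:reduction_w_adjacent_sources}. This time the cops probe an adjacent pair consisting of a \emph{contaminated} point vertex and the line joining it to $x$. Condition (3) is what guarantees that this point vertex sits in one of the ``contaminated'' blocks $W_j$.

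\textbf{Setup.} As in the proof of Lemma \ref{lem:reduction_w_nonadjacent_case_1}, write $N(x)=\{w_1,\dots,w_{q+1}\}$ with $w_1,\dots,w_i$ contaminated and the rest clear. Since $x$ and $y$ are nonadjacent, each $w_j$ has a unique neighbor in $N(y)$, and these neighbors are distinct. So we may label $N(y)=\{u_1,\dots,u_{q+1}\}$ with $w_ju_j\in E(G)$. Put $W_j=N(w_j)\setminus\{x\}$; as shown earlier, $\{x\},W_1,\dots,W_{q+1}$ partition $\mathcal P$. Note $u_j\in W_j$, because $u_j\neq x$ (as $x\notin N(y)$).

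By (1) and bipartiteness, the contaminated point vertices all lie in $N(y)$ and the contaminated line vertices all lie in $N(x)$. Condition (3) says that if $w_j$ is clear then $u_j$ is clear. Hence every contaminated point vertex is some $u_j$ with $j\le i$. By (4) at least one exists; without loss of generality it is $u_1$.

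After recontamination, every contaminated point vertex is either a neighbor of some $w_j$ with $j\le i$ or a former contaminated point. Thus the contaminated point vertices lie in $\{x\}\cup W_1\cup\dots\cup W_i$. In particular, every vertex of $W_{i+1}\cup\dots\cup W_{q+1}$ is still clear.

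\textbf{The probe.} On the next turn the cops probe the adjacent pair $a=u_1\in\mathcal P$ and $b=w_1\in\mathcal L$; let the responses be $a'$ and $b'$.

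If $a'=b$ or $b'=a$, the cops win by Lemma \ref{lem:proj_plane_adj_vxs}. Otherwise $a'$ is a line through $u_1$ other than $w_1$.

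By Lemma \ref{lem:2_neighborhoods}, every consistent point vertex lies in $N(a')$ and every consistent vertex lies in $N(a')\cup N(b')$. Now count the contaminated point vertices in $N(a')$:
\begin{itemize}
\item $a'$ is not adjacent to $x$, since the only common neighbor of $x$ and $u_1$ is $w_1\neq a'$.
\item $a'$ meets each $W_j$ in exactly one vertex, because $a'$ and $w_j$ share exactly one point. That vertex is clear for $j>i$.
\item In $W_1$ that vertex is $u_1$ itself, which has just been probed and is therefore cleared.
\end{itemize}
So at most $i-1$ point vertices remain contaminated.

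Finally, $a'$ and $b'$ are nonadjacent. Otherwise $a$ and $b'$ would have two common neighbors, $b$ and $a'$, which is impossible; this is the same argument as at the end of Lemma \ref{lem:reduction_w_adjacent_sources}. Taking $u=b'$ and $v=a'$ gives the desired state.

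\textbf{Main obstacle.} The delicate step is choosing the probe so that the reduction happens within a \emph{single} probing phase. The analogous probe in Lemma \ref{lem:reduction_w_nonadjacent_case_1} could land in an adjacent configuration requiring a further phase. Probing the contaminated $u_1$ together with $w_1$ avoids this: the $W_1$-representative of $a'$ is forced to be the probed vertex $u_1$. This, in turn, depends on condition (3) placing all contaminated points among $u_1,\dots,u_i$, so I would check that step carefully. I would also check the degenerate case where the robber sits on $a$ or $b$, which is an immediate win.
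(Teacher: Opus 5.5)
Your proof is correct and follows essentially the same route as the paper. Both arguments probe an adjacent pair $a\in W_1$, $b=w_1$ and dispose of the case $a'=b$ (or $b'=a$) via Lemma \ref{lem:proj_plane_adj_vxs}. Both then use condition (3) and the partition $\{x\},W_1,\dots,W_{q+1}$ of $\mathcal P$ to show that $a'$ has one neighbor in each $W_j$: that neighbor is clear for $j>i$, and in $W_1$ it is the probed vertex itself.

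The only difference is that the paper takes $a\in W_1\setminus\{u_1\}$ rather than $a=u_1$. Either choice works, and your counting already covers the extra possibility $a'=y$ that your choice allows. So the specific choice of $u_1$ is not the delicate point your last paragraph suggests.
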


\begin{proof}
    Let $N(x) = \{w_1, \dots, w_{q+1}\}$, where $w_1, \dots, w_i$ are contaminated and $w_{i+1}, \dots, w_{q+1}$ are clear.  As in the proof of the preceding lemma, we may let $N(y) = \{u_1, \dots, u_{q+1}\}$ such that $w_ju_j \in E(G)$ for all $j$.  For each $j$, let $W_j = N(w_j) \setminus \{x\}$ and $U_j = N(u_j) \setminus \{y\}$.  
    By assumption (4), there must be some $j$ such that $u_j$ is contaminated; by assumption (3), it follows that $w_j$ must also be contaminated.  Without loss of generality, suppose that $u_1$ and $w_1$ are both contaminated. After the subsequent recontamination phase, the only contaminated point vertices are those neighbors of $y$ that were originally contaminated and those vertices in $\bigcup\limits^{i}_{j=1} W_j$.

    Let $a$ be any vertex in $W_1$ other than $u_1$ and let $b = w_1$.  On the cops' next turn, they probe $a$ and $b$; let the robber's responses to these probes be $a'$ and $b'$, respectively.  If $a' = b$, then the cops can win the game via Lemma \ref{lem:proj_plane_adj_vxs}, so suppose otherwise.  As in the proof of Lemma \ref{lem:reduction_w_adjacent_sources}, $\{\{y\}, U_1, \dots, U_{q+1}\}$ is a partition of $\mathcal{L}$, so $a' \in U_k$ for some $k \neq 1$.  Since the only common neighbor of $a$ and $x$ is $b$, and since $a' \not = b$, it follows that $a' \not \in N(x)$.  Additionally, since all vertices in each $W_j$ have $w_j$ as their unique common neighbor, it follows that $a'$ has exactly one neighbor in each $W_j$.
    
    We claim that every contaminated neighbor of $a'$ must belong to $W_1 \cup \dots \cup W_i$.  Indeed, the only contaminated point vertices outside of $W_1 \cup \dots \cup W_i$ are those $u_j$ that were already contaminated prior to the recontamination phase.  Since $u_j \in W_j$, we need only consider the case $j \ge i+1$.  In this case, if $u_j$ was contaminated, then prior to recontamination we would have had an adjacent contaminated vertex $u_j \in N(y)$ and clear vertex $w_j \in N(x)$, contradicting assumption (4).  
    
    It follows that $a'$ has at most $i$ contaminated neighbors -- one in $W_j$ for each $j \in \{1, \dots, i\}$.  However, the unique neighbor of $a'$ in $W_1$ is $a$ itself, which was probed; we may assume that vertex to be clear.  Consequently, $a'$ has at most $i-1$ contaminated neighbors.  Finally, since $a$ and $b$ were chosen to be adjacent, it follows that $a'$ and $b'$ must not be adjacent (since then $a$ and $b'$ would have two common neighbors, namely $a'$ and $b$).  Thus, at the end of the cops' turn, all contaminated vertices are contained within $N(a') \cup N(b')$, there are at most $i-1$ contaminated point vertices, and $a'b' \notin E(G)$, as desired. 
\end{proof}

\begin{lemma}\label{lem:reduction_overall}
    Let $G$ be the incidence graph of a finite projective plane of order $q$, and consider the partial-feedback game on $G$.  Suppose that at the end of a probing phase, all contaminated vertices are contained within $N(x) \cup N(y)$ for some non-adjacent $x \in \mathcal{P}$ and $y \in \mathcal{L}$, and let $i$ denote the number of contaminated line vertices.  If $i > 2$, then the cops can either win or ensure that at the end of some subsequent probing phase, the contamination will be contained in $N(u) \cup N(v)$ for some non-adjacent point vertex $u$ and line vertex $v$, and one partite set will contain at most $i-1$ contaminated vertices.
\end{lemma}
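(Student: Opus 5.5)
This lemma should follow by combining Lemmas \ref{lem:reduction_w_nonadjacent_case_1} and \ref{lem:reduction_w_nonadjacent_case_2}, with one extra case that neither covers. Suppose that at the end of a probing phase all contamination lies in $N(x) \cup N(y)$, where $x \in \mathcal{P}$ and $y \in \mathcal{L}$ are non-adjacent, and let $i > 2$ be the number of contaminated line vertices. Since $x$ and $y$ are non-adjacent, conditions (1) and (2) of both lemmas hold automatically. I would split on condition (3), namely whether some clear $\ell \in N(x)$ is adjacent to some contaminated $m \in N(y)$.

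\textbf{Case A: condition (3) of Lemma \ref{lem:reduction_w_nonadjacent_case_1} holds.} Since $i > 2 \ge 2$, that lemma applies directly. It gives a later probing phase after which the contamination lies in $N(u) \cup N(v)$ for some non-adjacent $u \in \mathcal{P}$, $v \in \mathcal{L}$, and some partite set has at most $i-1$ contaminated vertices. This is the desired conclusion.

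\textbf{Case B: no such pair exists.} Then condition (3) of Lemma \ref{lem:reduction_w_nonadjacent_case_2} holds. If at least one point vertex is contaminated, condition (4) also holds. That lemma then lets the cops either win or reach, after one more probing phase, a configuration with contamination in $N(u) \cup N(v)$ for non-adjacent $u,v$ and at most $i-1$ contaminated point vertices. Again we are done.

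\textbf{Case C: no point vertex is contaminated.} Here all contaminated vertices are line vertices, and they all lie in $N(x)$ because $N(y) \subseteq \mathcal{P}$. I expect this to be the main obstacle, since no earlier lemma covers it directly.

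My plan for Case C is to pass to the dual configuration. After recontamination, the contaminated set is $C \cup N(C)$, where $C \subseteq N(x)$ is the set of $i$ contaminated lines. Viewed with points and lines swapped, this resembles the setting of Lemma \ref{lem:reduction_w_adjacent_sources}: the cops probe one line and one point and invoke Lemma \ref{lem:2_neighborhoods}, which confines the point vertices to $N(a')$ for a response $a'$. The line $a'$ meets each $N(w_j) \setminus \{x\}$ in exactly one point, so only $i$ of its neighbors can be contaminated. Choosing the probed point inside $N(w_1)$ for some contaminated $w_1$ then leaves at most $i-1$ contaminated point vertices.

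Concretely, I would probe $a \in N(w_1) \setminus \{x\}$ and $b = w_1$. If the probe at $a$ points to $b$, Lemma \ref{lem:proj_plane_adj_vxs} wins for the cops. Otherwise $a' \notin N(x)$, and, as in the proof of Lemma \ref{lem:reduction_w_nonadjacent_case_2}, $a'$ and $b'$ are non-adjacent because $a$ and $b$ are adjacent. By Lemma \ref{lem:2_neighborhoods} the contamination then lies in $N(a') \cup N(b')$ with at most $i-1$ contaminated point vertices, which completes this case.

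Finally, I would check in Case C that the reduced partite set really has at most $i-1$ contaminated vertices, even though the count was originally phrased in terms of line vertices. The lemma only requires that one partite set have at most $i-1$ contaminated vertices, so this suffices.
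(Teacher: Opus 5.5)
Your argument is correct, and your Cases A and B are exactly the paper's proof. That proof is just the split on condition (3): Lemma \ref{lem:reduction_w_nonadjacent_case_1} if some clear $\ell \in N(x)$ is adjacent to a contaminated $m \in N(y)$, and Lemma \ref{lem:reduction_w_nonadjacent_case_2} otherwise.

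Your Case C goes beyond the paper. The paper applies Lemma \ref{lem:reduction_w_nonadjacent_case_2} in the ``otherwise'' branch without checking its hypothesis (4). So the situation with no contaminated point vertex, where condition (3) holds vacuously, is not covered by its one-line proof. Your treatment of this case is sound. It is essentially the proof of Lemma \ref{lem:reduction_w_nonadjacent_case_2}, with hypothesis (4) replaced by the trivial existence of a contaminated $w_1 \in N(x)$:
after recontamination the contaminated point vertices are exactly $\{x\} \cup W_1 \cup \dots \cup W_i$, where $W_j = N(w_j)\setminus\{x\}$. If $a' \neq b$, then $a' \notin N(x)$, so $a'$ has exactly one neighbor in each $W_j$. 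Only the neighbors in $W_1, \dots, W_i$ can be contaminated, and the one in $W_1$ is the probed vertex $a$.

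One small point to add. Besides $a' = b$, you must also dispose of $b' = a$, using the ``vice-versa'' clause of Lemma \ref{lem:proj_plane_adj_vxs}. In that case $a'$ and $b'$ are adjacent, and the two-common-neighbors argument does not apply. The paper's proof of Lemma \ref{lem:reduction_w_nonadjacent_case_2} skips this as well.
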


\begin{proof}
    If there exist a clear vertex $\ell \in N(x)$ and a contaminated vertex $m \in N(y)$ such that $\ell m \in E(G)$, then the claim holds by Lemma \ref{lem:reduction_w_nonadjacent_case_1}; otherwise, it holds by Lemma \ref{lem:reduction_w_nonadjacent_case_2}.

\end{proof}

As noted earlier, the cops' high-level strategy is to gradually reduce the number of contaminated vertices in a given partite set.  Our next two lemmas show that if they can reduce to having only one contaminated vertex in some partite set, then they can win the game.

\begin{lemma}\label{lem:1pt_1ln}
    Let $G$ be the incidence graph of a finite projective plane of order $q$.  Suppose that at the end of a probing phase, there is a single contaminated point vertex $x$ and a single contaminated line vertex $y$. Then the cops have a strategy to locate the robber.
\end{lemma}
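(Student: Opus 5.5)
Two cases, according to whether $x$ and $y$ are adjacent. In both, the idea is to reduce the contamination to a configuration already handled by Lemma \ref{lem:proj_plane_adj_vxs} or one that a single round of probing resolves directly.

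\textbf{Case 1: $x$ and $y$ are adjacent.} This is exactly the final stage of the proof of Lemma \ref{lem:proj_plane_adj_vxs}. After recontamination, the robber lies in $N[x]\cup N[y] = N(x)\cup N(y)$. The cops probe $x$ and $y$; if neither is the robber's vertex we reason as follows.
\begin{itemize}
\item If the robber is on a point vertex, he lies in $N(y)\setminus\{x\}$ and is at distance $2$ from $x$. The probe at $x$ must then point to $y$, the unique common neighbor, and the probe at $y$ points to the robber itself.
\item If the robber is on a line vertex, the situation is symmetric with the roles of $x$ and $y$ exchanged.
\end{itemize}
The response at $x$ is either $y$ or some other vertex, so the cops can tell which case occurred. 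In either case the remaining response identifies the robber.

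\textbf{Case 2: $x$ and $y$ are nonadjacent.} After recontamination the robber lies in $N[x]\cup N[y]$. The plan is to probe $x$ and $y$.

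If the robber is on a point vertex $r\neq x$, then $r\in N(y)$, so the probe at $y$ returns $r$ itself. Since $r$ and $x$ are distinct points, they are at distance $2$, and the probe at $x$ returns their unique common neighbor. Dually, if the robber is on a line vertex $r\neq y$, then $r\in N(x)$, the probe at $x$ returns $r$, and the probe at $y$ returns the unique common neighbor of $r$ and $y$.

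The ambiguity to worry about is the following. Suppose the probe at $x$ returns a line $\ell\in N(x)$ and the probe at $y$ returns a point $p\in N(y)$. This response is consistent with the robber being at $\ell$ only if $p$ lies on a shortest $y,\ell$-path, i.e. $p$ is the common neighbor of $\ell$ and $y$, which means $p\ell\in E(G)$. It is consistent with the robber being at $p$ only if $\ell$ is the common neighbor of $p$ and $x$, which again means $p\ell\in E(G)$. So when $p\ell\in E(G)$ both $p$ and $\ell$ are consistent with the probes, and the cops are left with two adjacent candidates $p$ and $\ell$. This is precisely the position that Case 1 resolves in one more round.

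If $p\ell\notin E(G)$, neither $p$ nor $\ell$ is consistent with the responses, so this response cannot occur; in every other case the robber is uniquely located.

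I expect the main obstacle to be the bookkeeping needed to check that nothing else slips through in Case 2. There are two points to verify.
\begin{itemize}
\item Vertices that are contaminated but far from both $x$ and $y$ must be inconsistent with the probes. This holds because every contaminated vertex lies in $N[x]\cup N[y]$.
\item When the robber is on a point vertex, the response at $x$ (a line through $x$) must not also be consistent with a line-vertex robber, and symmetrically for a line-vertex robber. The analysis above handles this: any response that leaves two candidates forces them to be an adjacent pair $p,\ell$, which Case 1 then resolves.
\end{itemize}
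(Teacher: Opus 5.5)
Your proof is correct and follows the paper's route: you split on whether $x$ and $y$ are adjacent and probe $x$ and $y$, and in the nonadjacent case the only surviving candidates form an adjacent point--line pair, which reduces to the adjacent case. The differences are cosmetic: you settle the adjacent case directly in one round (the last paragraph of the proof of Lemma~\ref{lem:proj_plane_adj_vxs}) rather than citing that lemma, and you identify the candidates as the responses $p$ and $\ell$ themselves rather than going through Lemma~\ref{lem:2_neighborhoods}.
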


\begin{proof}
    After the recontamination phase, all contaminated vertices will be contained within $N(x) \cup N(y)$.  On the cops' next turn, they probe $x$ and $y$; suppose they receive responses $x'$ and $y'$, respectively.
    
    \medskip
    
    \textbf{Case 1:} $xy \in E(G)$. For the cops to not immediately win by probing the robber's location, the robber must be located in $N(x) \setminus \{y\}$ or $N(y) \setminus \{x\}$.  Either way, one probe will be the response of the other probe and by Lemma \ref{lem:proj_plane_adj_vxs} the cops have a winning strategy.

    \medskip
    
    \textbf{Case 2:} $xy \notin E(G)$. By Lemma \ref{lem:2_neighborhoods}, the point vertices consistent with the probes are contained in $N(x')$, and the line vertices consistent with the probes are contained in $N(y')$. Thus, the only contaminated point vertex consistent with the probes is the unique common neighbor of $y$ and $x'$, say $a$.  Similarly, the only contaminated line vertex consistent with the probes is the unique common neighbor of $x$ and $y'$, say $b$.  If $x' \neq b$ or $y' \neq a$, then only a single contaminated vertex is consistent with the probe responses, so the cops win.  Assume instead that $x' = b$ and $y' = a$.  Now $a$ and $b$ are the only contaminated vertices, and since $a \in N(x') = N(b)$, it follows that $a$ and $b$ are adjacent; thus by Case 1, the cops have a winning strategy from this point in the game.
\end{proof}

\begin{lemma}\label{lem:1_contaminated_vtx}
    Let $G$ be the incidence graph of a finite projective plane of order $q$, and consider the partial-feedback game on $G$.  Suppose that at the end of a probing phase, there exist non-adjacent vertices $x \in \mathcal{P}$ and $y \in \mathcal{L}$ such that all contaminated vertices are contained within $N(x) \cup N(y)$.  If there is exactly one contaminated line vertex, then the cops have a winning strategy.
\end{lemma}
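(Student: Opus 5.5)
Write $w$ for the unique contaminated line vertex. Since $x \in \mathcal{P}$ and $y \in \mathcal{L}$, the contaminated line vertices lie in $N(x)$, so $w \in N(x)$. The contaminated point vertices form a set $P_0 \subseteq N(y)$. The plan is to push the position into a configuration covered by an earlier lemma (Lemma \ref{lem:1pt_1ln}, Lemma \ref{lem:proj_plane_adj_vxs}, or Lemma \ref{lem:reduction_w_nonadjacent_case_2} with $i=1$), or else to finish by hand. There are three cases.

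\textbf{Case A: $P_0=\emptyset$.} Then the robber is known to be on $w$ and the cops have already won.

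\textbf{Case B: condition (3) of Lemma \ref{lem:reduction_w_nonadjacent_case_2} holds and $P_0 \ne \emptyset$.} Here no clear vertex of $N(x)$ is adjacent to a contaminated vertex of $N(y)$. This is precisely the hypothesis set of Lemma \ref{lem:reduction_w_nonadjacent_case_2}, with $i=1$. So either the cops win, or after one more probing phase all contamination lies in $N(u)\cup N(v)$ with $u \in \mathcal{P}$, $v \in \mathcal{L}$ nonadjacent, and there are no contaminated point vertices. In the latter case every contaminated vertex is a line through $u$. After recontamination, the robber is on a line through $u$ or on a point at distance $2$ from $u$. The cops probe $u$ and an arbitrary point $p \neq u$.
\begin{itemize}
\item If the robber is on a line through $u$, the probe at $u$ points at him and this is detected.
\item If he is on a point $r$, the probes return lines $\ell_u \ni u,r$ and $\ell_p \ni p,r$. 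When $\ell_u \ne \ell_p$ their unique common point is $r$.
\item The degenerate subcase $\ell_u=\ell_p$ is when $r$ lies on the line $up$. There one more round of probing, using Lemma \ref{lem:proj_plane_adj_vxs} or a second choice of $p$, should finish.
\end{itemize}
Alternatively, I would like to argue the zero-contaminated-points situation by invoking the dual of the present lemma's setup.

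\textbf{Case C: condition (3) fails and $P_0 \neq \emptyset$.} So some clear $\ell \in N(x)$ is adjacent to a contaminated $m \in N(y)$. This is the hypothesis of Lemma \ref{lem:reduction_w_nonadjacent_case_1}, but that lemma requires $i \ge 2$. I would therefore rerun its proof with $i=1$: label $N(x)=\{w_1,\dots,w_{q+1}\}$ with $w_1=w$, and match $u_j \in N(y)$ to $w_j$. Take $w_{q+1}=\ell$ and $u_{q+1}=m$. Probe $a \in W_{q+1}\setminus\{u_{q+1}\}$ and $b=w_1$, then follow its three cases.
\begin{itemize}
\item In Case 1a at most one point vertex ($u_{q+1}$) and, by Lemma \ref{lem:2_neighborhoods}, the line side is confined to $N(b')$. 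Intersecting with what can be contaminated should leave at most one line as well, so Lemma \ref{lem:1pt_1ln} applies.
\item In Case 1b the configuration has adjacent sources with one contaminated source, so Lemma \ref{lem:proj_plane_adj_vxs} or the dual of Lemma \ref{lem:reduction_w_adjacent_sources} applies.
\item In Cases 2 and 3, $a'$ has at most $i=1$ contaminated neighbour (the one in $W_1$), and Case 3 kills it. So we reach either zero contaminated points, handled as in Case B, or one point and one line, handled by Lemma \ref{lem:1pt_1ln}.
\end{itemize}

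\textbf{Expected obstacle.} The main difficulty is making sure that the reductions with $i=1$ really reach a terminal configuration and do not merely swap which partite set is small. That is, after each step, both partite sets must be controlled enough for Lemma \ref{lem:1pt_1ln} or the direct probing argument to apply. I expect most of the care to go into the zero-point subcase and the degenerate collinear subcase above.
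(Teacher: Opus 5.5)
There is a genuine gap, and it is the obstacle you anticipated: with $i=1$, your reruns of Lemmas \ref{lem:reduction_w_nonadjacent_case_1} and \ref{lem:reduction_w_adjacent_sources} never reach a terminal configuration.

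\textbf{Case C, Subcase 1a.} The claim that at most one line survives is false. After recontamination, every line through a contaminated $u_j$ is contaminated, and every line through $b'$ other than $w_1$ and the line $ab'$ is consistent with both probes. For example, take $P_0=N(y)$ and let the robber sit on $u_{q+1}$ and answer $a'=w_{q+1}$ and $b'=p$ for some $p\in W_1\setminus\{u_1\}$. This is legal, since $u_{q+1}\notin w_1$. The cops are then left with the point $u_{q+1}$ and $q-1$ lines through $p$. That is exactly the dual of the statement you are proving. Appealing to the dual is circular unless you supply a strictly decreasing measure, and you do not.

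\textbf{Case C, Subcase 1b.} This is circular outright. The dual of Lemma \ref{lem:reduction_w_adjacent_sources} only guarantees at most one contaminated line vertex, which is the present hypothesis again.

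\textbf{Case C, Cases 2 and 3.} The count ``$a'$ has at most one contaminated neighbour'' ignores the point $a'\cap y$. That point can be a contaminated $u_k$ with $2\le k\le q$. The lines through $b'$ are also uncontrolled, so neither ``zero points'' nor ``one point and one line'' follows.

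\textbf{Case B.} This is also left open: the collinear subcase is only ``should finish'', and the dual-lemma alternative is circular. Here the repair is trivial. Under condition (3), every contaminated $u_j$ forces $w_j$ to be contaminated, so $P_0=\{u_1\}$. The contamination is then the adjacent pair $\{u_1,w\}$, and Lemma \ref{lem:1pt_1ln} applies immediately. Routing through Lemma \ref{lem:reduction_w_nonadjacent_case_2} actually worsens the position, to as many as $q$ lines through $u_1$.

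\textbf{The missing idea.} You do not need the reduction lemmas; use the single contaminated line directly.
\begin{itemize}
\item After the next recontamination phase, every contaminated point lies on $y$ or on $w$. Probe the lines $y$ and $w$ themselves, receiving points $y'$ and $w'$.
\item A probe at a line that points at $s$ is consistent with a point on that line only if that point is $s$. So the consistent contaminated points lie in $\{y',w'\}$, and the consistent lines pass through both $y'$ and $w'$.
\item If $y'=w'$, this point is $u=y\cap w$, and only $u$ and lines through $u$ remain. Probing $u$ and another point of $y$ then leaves at most one point and one line.
\item If $y'\neq w'$ and one of them equals $y\cap w$, only a single point remains and the cops have won.
\item Otherwise you are left with $y'$, $w'$ and the line $b=y'w'$. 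Probing $b$ together with a line meeting $b$ outside $\{y',w'\}$ again leaves at most one point and one line.
\end{itemize}
Every branch ends in an immediate win or in Lemma \ref{lem:1pt_1ln}, with no appeal to duality. This is the paper's route.
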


\begin{proof}
    Let $v$ denote the single contaminated line vertex.  For the sake of this argument, we will assume that all vertices in $N(y)$ are contaminated; it is clear that if the cops can win in this case, then they can win no matter which vertices of $N(y)$ are contaminated.
    
    Every line vertex has a common neighbor with $y$, so after the ensuing recontamination phase, all line vertices will be contaminated; the set of contaminated point vertices will be $N(y) \cup N(v)$. On the following cop turn, the cops will probe vertices $y$ and $v$; let $y'$ and $v'$, respectively, denote the robber's responses to these probes.  Note that if the robber occupies a point vertex, then he occupies some vertex in $N(y) \cup N(v)$, so either $y'$ or $v'$ (or both) must correspond to his location.  We consider two cases:\\
    
    
    \textbf{Case 1:} $y' = v'$.  In this case, the only contaminated point vertex consistent with the probe responses is $y'$; let $u = y'$.  The contaminated line vertices consistent with the probe responses are those vertices in $N(u)$, excluding $y$ and $v$ which were probed and therefore cleared.  After the ensuing recontamination phase, all point vertices will be contaminated except for those in $(N(y)\cup N(v)) - u$.  Also after recontamination, the set of contaminated line vertices will be $N(u)$.
    On the following cop turn, the cops shall probe $u$ and any other vertex $a \in N(y)$; let the probe responses be $u'$ and $a'$, respectively.  Note that if the robber occupies a line vertex, then that vertex belongs to $N(u)$; hence, the probe at $u$ must point to the robber's location.  Thus, $u'$ is the only contaminated line vertex that could possibly be consistent with the probes. 
    
    \textbf{Case 1a:} $u' = a'$.
    In this subcase, the unique common neighbor of $u$ and $a$ is $y$, so $u' = a' = y$.  As noted above, if the robber occupies a line vertex, then that vertex must be $y$; if instead he occupies a point vertex, then that vertex must be in $N(y)$.  Since all vertices of $N(y) - u$ were clear prior to the probing phase and since $u$ was probed (and thus cleared), there are no contaminated point vertices consistent with the probes.  Thus the only contaminated vertex consistent with the probes is $u'$, so the cops have located the robber.
    
    \textbf{Case 1b:} $u' \neq a'$.
    In this subcase, $u'$ and $a'$ have a unique common neighbor, say $b$.  This vertex $b$ is the only contaminated point vertex consistent with the probes.  If $u'=y$, then the cops win immediately: since $a$ is adjacent to $u'$, and $a' \not = u'$, the robber cannot occupy $u'$; as noted above, $u'$ was the only contaminated line vertex potentially consistent with the probes; since $u'$ in fact it is not consistent with the probes, the only possible location for the robber is $b$.  Otherwise, $b$ and $u'$ are the only contaminated vertices consistent with the probes, so the cops have a winning strategy by Lemma \ref{lem:1pt_1ln}. 

    \medskip

    \textbf{Case 2:} $y' \neq v'$.
    In this case, the only contaminated point vertices consistent with the probe responses will be $y'$ and $v'$.  Since $y'$ and $v'$ are distinct, they have a unique common neighbor $b$, which is the only contaminated line vertex consistent with the probes.  If either $y'$ or $v'$ is the common neighbor of $y$ and $v$, then that vertex would not be consistent with the probes because only one of the probes points there.  Hence only one contaminated point vertex and one contaminated line vertex would remain, so by Lemma \ref{lem:1pt_1ln} the cops would have a winning strategy.  Thus, we may assume that neither $y'$ nor $v'$ is the common neighbor of $y$ and $v$.
    
    After recontamination, the set of contaminated point vertices will be $N(b)$ and the set of contaminated line vertices will be $N(y') \cup N(v')$. On the cops' next turn, they will probe $b$ and any other line vertex $c$ such that the common neighbor of $b$ and $c$ is neither $y'$ nor $v'$.  Suppose they receive responses $b'$ and $c'$, respectively.  Since the set of contaminated point vertices prior to this move was $N(b)$, we see that $b'$ is the only contaminated point vertex consistent with the response of the probe at $b$.  Moreover, if $b' \not \in \{v',y'\}$, then there can be no contaminated line vertices consistent with the probes, so the robber would have to occupy $b'$; assume instead that $b' \in \{v', y'\}$.  With this assumption, we have $b' \not \in N(c)$, so $b' \neq c'$, and therefore $b'$ and $c'$ have a unique common neighbor.  This neighbor is the only contaminated line vertex consistent with the probes.  Thus only one contaminated point vertex and one contaminated line vertex remain, so the cops have a winning strategy by Lemma \ref{lem:1pt_1ln}.
\end{proof}

We are finally ready to prove the main result of this section.

\begin{theorem}\label{thm:proj_planes}
If $G$ is the incidence graph of a projective plane of order $q$, then $\dirloccomp(G) = \dirloc(G) = 2$. 
\end{theorem}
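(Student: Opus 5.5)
By Proposition \ref{prop:complete_easier}, it suffices to prove two things: $\dirloccomp(G) \ge 2$ and $\dirloc(G) \le 2$.

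\textbf{Lower bound.} Against a single cop in the full-feedback model, I would show the robber can always keep at least two contaminated vertices after every probing phase. Suppose all vertices are contaminated at the start of a round and the cop probes $v$, say $v \in \mathcal{P}$. Every point vertex other than $v$ is at distance $2$ from $v$, reached through a unique common neighbor. Every line vertex not adjacent to $v$ is at distance $3$ from $v$, and it has exactly one neighbor on each of the $q+1$ lines through $v$, so each neighbor of $v$ lies on a shortest path to it. The robber therefore answers with the full set $N(v)$, which is the correct full-feedback response for any line vertex not adjacent to $v$. There are $q^2 \ge 4$ such line vertices, all consistent with the probe. Every point vertex is within distance $2$ of one of them, and any two of them share a neighbor, so after recontamination the territory is again all of $V(G)$. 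By induction no single cop ever wins, and $\dirloccomp(G) \ge 2$.

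\textbf{Upper bound.} I would give two cops in the partial-feedback game the following strategy, built from the lemmas.
\begin{enumerate}
\item In round one, probe an arbitrary point vertex $x_0$ and a line vertex $y_0$ that are not adjacent to each other. By Lemma \ref{lem:2_neighborhoods}, all contaminated vertices then lie in $N(x') \cup N(y')$, where $x'$ and $y'$ are the two responses.
\item If $x'$ and $y'$ are adjacent, then $x'$ is a line through $x_0$ and $y'$ is a point on $y_0$. At least one of $x'$, $y'$ is contaminated, or else the contaminated set shrinks. In that case I would apply Lemma \ref{lem:reduction_w_adjacent_sources} or its dual, or, if it is not applicable, reselect the probes. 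The simpler route is to choose $x_0$ and $y_0$ so that this case either yields a win through Lemma \ref{lem:proj_plane_adj_vxs} or already fits the nonadjacent framework.
\item Once the state is ``contaminated set contained in $N(u) \cup N(v)$ with $u \in \mathcal{P}$, $v \in \mathcal{L}$ nonadjacent,'' let $i$ be the number of contaminated vertices in the relevant partite set. This is at most $q+1$.
\item Repeatedly apply Lemma \ref{lem:reduction_overall} (with its duals, since the partite set being reduced may switch). Each application either wins or restores the same framework with some partite set having at most $i-1$ contaminated vertices.
\item Once some partite set has at most $2$ contaminated vertices, finish off the small cases using Lemmas \ref{lem:reduction_w_nonadjacent_case_1} and \ref{lem:reduction_w_nonadjacent_case_2}, which need only $i \ge 2$ and not $i > 2$.
\item When exactly one line vertex is contaminated, Lemma \ref{lem:1_contaminated_vtx} gives a win. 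The dual statement covers the case of a single contaminated point vertex, and the case of no contaminated vertices in one partite set is immediate.
\end{enumerate}

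\textbf{Main obstacle.} The delicate point is that the monotone quantity must be well-founded, because the partite set being reduced can alternate between applications of the lemmas. I would use the potential $\min(\text{contaminated points}, \text{contaminated lines})$ and check that it strictly decreases at each step. This means verifying that the lemmas' conclusion of ``at most $i-1$ in some partite set'' is always measured against the previous minimum. If it is not, I would strengthen the induction by invoking the dual lemmas so that $i$ always refers to the smaller side.
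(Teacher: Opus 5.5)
\textbf{The lower bound has a genuine gap.} Your upper-bound plan is sound, but the lower-bound argument fails at the recontamination step.

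When the probe at the point $v$ is answered with $N(v)$, the consistent set is exactly the $q^2$ lines not through $v$. The robber then moves only one step, so at the start of the next round the territory is these lines together with all points other than $v$. The point $v$ is at distance $3$ from every consistent line, and each of the $q+1$ lines through $v$ is at distance $2$ from them, so all of these stay clear. Your observations about ``within distance $2$'' and ``share a neighbor'' would only matter if contamination spread two steps per round.

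So the hypothesis ``all of $V(G)$ is contaminated at the start of the round'' already fails in round $2$, and it can never be restored. Suppose a point $p$ is probed and the robber is not at $p$. Every legal full-feedback answer is then either $\{m\}$ with $m\ni p$, or $N(p)$:
\begin{itemize}
\item for $\{m\}$, the consistent set lies in $\{m\}\cup N(m)$, whose closed neighborhood misses every point off $m$;
\item for $N(p)$, the consistent set lies in the lines missing $p$, whose closed neighborhood misses $N[p]$.
\end{itemize}
Nor does your tactic itself survive. If the robber keeps answering point probes with full neighborhoods, no point is ever consistent, so no line is ever recontaminated. The consistent lines are then exactly those missing every probed point, and in the Fano plane three non-collinear probes leave a single line.

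What you are missing is an invariant that survives a probe followed by one recontamination step. The paper uses a \emph{stable state}: adjacent $x,y$ with all of $N(x)\cup N(y)$ contaminated at the start of the round. Rename so that the probe $p$ lies in the same partite set as $x$.
\begin{itemize}
\item If $p\notin N(y)$, answer with the unique common neighbor $v$ of $p$ and $x$. This is consistent with the robber being at $v$ or at $x$, both contaminated, so next round $N(v)\cup N(x)$ is contaminated with $v\sim x$.
\item If $p\in N(y)$, answer with $y$. This is consistent with $y$ and with any other $w\in N(y)$, giving the stable pair $y,w$.
\end{itemize}
Each answer is consistent with two distinct contaminated vertices, so a single cop never locates the robber.

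\textbf{The upper bound is essentially the paper's argument.} The paper also takes $i$ to be the count on the sparser side and reduces it via Lemma \ref{lem:reduction_overall} and its dual, then finishes with Lemma \ref{lem:1_contaminated_vtx}. You are right that the reduction is really needed for $i\ge 2$, and its two constituent lemmas supply this.

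Your nonadjacent opening also works. If $x'\sim y'$, then $y'=y_0\cap x'$, both $x'$ and $y'$ are consistent, and Lemma \ref{lem:reduction_w_adjacent_sources} applies with $i=q\ge 2$. The paper's opening is cleaner, though: probing an adjacent pair $x_0\sim y_0$ forces $x'\not\sim y'$ unless one probe points at the other, which Lemma \ref{lem:proj_plane_adj_vxs} handles.
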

\begin{proof}
Let $G$ be the incidence graph of e a projective plane of order $q$.  Since $\dirloccomp(G) \le \dirloc(G)$, to show that $\dirloccomp(G) = \dirloc(G) = 2$, it suffices to argue that $\dirloccomp(G) > 1$ and $\dirloc(G) \le 2$.

To show that $\dirloccomp(G) > 1$, we give a strategy for the robber to avoid being located by a single cop in the full-feedback game on $G$.  Toward this end, we say that the game is in a \textit{stable state} if $G$ contains some adjacent vertices $x$ and $y$ such that all vertices in $N(x) \cup N(y)$ are contaminated; we claim that the robber can ensure that the game is in a stable state at the beginning of each round.  This is clearly the case at the beginning of the first round.  Suppose that the game is in a stable state at the beginning of the $k$th round; we show that the robber can ensure that it is in a stable state at the beginning of the $(k+1)$st round.  Suppose the cop probes vertex $p$.  By symmetry, we may suppose that $p$ and $x$ are point vertices, while $y$ is a line vertex.  We have two cases to consider. 

\medskip

\textbf{Case 1:} $p \not \in N(y)$.  Since $p$ and $x$ are both point vertices, they have a unique common neighbor $v$; the robber responds to the probe with $v$.  Since $v$ is the only common neighbor of $p$ and $x$, the only shortest path between them uses $v$.  Hence, the robber's response to the probe is consistent with his occupying either $v$ or $x$.  At the beginning of the $(k+1)$st round -- that is, after recontamination -- all vertices in $N(v) \cup N(x)$ will be contaminated.  Since $v$ and $x$ are adjacent, the game is in a stable state, as desired.

\medskip

\textbf{Case 2:} $p \in N(y)$.  In this case, the robber responds to the cop's probe with $y$.  Let $v$ be any neighbor of $y$ other than $p$.  Since $p$ and $v$ are both point vertices, they have a unique common neighbor, namely $y$.  Thus, the robber's response to the probe is consistent with his occupying either $y$ or $v$; at the beginning of the $(k+1)$st round, all vertices in $N(y) \cup N(v)$ will be contaminated, and the game is once again in a stable state.

\medskip

In either case, the robber can ensure that the game is in a stable state at the beginning of each round.  Note that if the cop were ever able to uniquely determine the robber's position -- say, she determined that the robber was located at vertex $z$ -- then at the beginning of the next round, the set of contaminated vertices would be precisely $N[z]$, so the game would not be in a stable state.  Hence, the robber's strategy ensures that he can never be located.  It follows that $\dirloccomp(G) > 1$.

Next, we argue that $\dirloc(G) \le 2$ by giving a strategy for two cops to locate a robber on $G$. On the cops' first move of the game, they probe a point vertex $x$ and an adjacent line vertex $y$. If the probe at $x$ points at $y$ (or vice-versa), then the cops can win by Lemma \ref{lem:proj_plane_adj_vxs}, so suppose otherwise.  Since $x$ and $y$ are adjacent, it is clear that the probe responses are non-adjacent. By lemma \ref{lem:2_neighborhoods}, all vertices consistent with these probes lie in $N(x') \cup N(y')$, where $x'$ is the response to the probe at $x$ and $y'$ is the response to the probe at $y$.

Suppose that there are $i$ contaminated vertices in the partite set containing the fewest contaminated vertices.  If $i=1$ then the cops can win by Lemma \ref{lem:1_contaminated_vtx}, so suppose $i \geq 2$.  By Lemma \ref{lem:reduction_overall} the cops may makes a series of moves to either win or ensure that at the end of some subsequent probing phase, the contamination will be contained in $N(u) \cup N(v)$ for some non-adjacent point vertex $u$ and line vertex $v$, and one partite set will contain at most $i-1$ contaminated vertices.  Through repeating this process, by Lemma \ref{lem:reduction_overall}, the cops will either win or ensure that at the end of some subsequent probing phase, the contamination will be contained in $N(u) \cup N(v)$ for some non-adjacent point vertex $u$ and line vertex $v$, and one partite set will contain at most $1$ contaminated vertex.  From that point in the game, the cops have a winning strategy by Lemma \ref{lem:1_contaminated_vtx}. Therefore $\dirloc(G) \le 2$ and by extension $\dirloccomp(G) \le 2$, completing the proof.
\end{proof}

\section{Open Questions}\label{sec:dirloc_future_work}

We conclude the paper with several intriguing open questions and avenues for future research.

\medskip

Corollary \ref{cor:hypercube} shows that $\dirloc(Q_n)$ is either $n$ or $n+1$; it would be nice to determine the exact value.
\begin{question}
What is the value of $\dirloc(Q_n)$?
\end{question}
We suspect that in fact $\dirloc(Q_n) = n$ for all $n$, but we have no strong evidence for this.  Recall that the bound $\dirloc(Q_n) \le n+1$ was a consequence of Theorem \ref{thm:degen_upper}, which states that if $G$ has degeneracy $k$, then $\dirloc(G) \le k+1$.  We know that this bound can be tight when $k=2$ (and that it cannot be tight when $k=1$), but perhaps it can be improved when $k \ge 3$.
\begin{question}
Is the bound in Theorem \ref{thm:degen_upper} ever tight for graphs of degeneracy 3 or greater?
\end{question}
Another approach toward improving the upper bound on $\dirloc(Q_n)$ would be to resolve the following question:
\begin{question}
Is it always true that $\dirloc(G) \le \Delta(G)$?
\end{question}
Note that the degeneracy of a graph $G$ is always bounded above by $\Delta(G)-1$ except when $G$ is regular; hence, one need only consider regular graphs, since otherwise the bound follows from Theorem \ref{thm:degen_upper}.

\medskip

Finally, we conclude with what is, in our opinion, the most pressing and intriguing open question about the directional localization game.  Our work in the present paper focused primarily on the partial-feedback model of the game; while some upper bounds on $\dirloccomp$ were obtained as consequences of bounds on $\dirloc$, only a few proofs actually analyzed the full-feedback model directly.  There is a great deal that we do not know about $\dirloccomp$.  The eagle-eyed reader may have noticed that we did not provide any examples of graphs having ``large'' values of $\dirloccomp$.  There is good reason for this: namely, we don't know of any such graphs.
\begin{question}
Is there a graph $G$ such that $\dirloccomp(G) > 2$?
\end{question}

\medskip


\bibliography{references}

\end{document}